\newcommand{\Mhm}[1]{\textcolor{purple}{#1}}
\newcommand{\rev}[1]{\textcolor{black}{#1}}
\title{A norm minimization-based convex vector optimization algorithm\thanks{This work was funded by T\"UB{\.I}TAK (Scientific \& Technological
		Research Council of Turkey), Project No. 118M479.}}
\author{\c{C}a\u{g}{\i}n Ararat\thanks{Bilkent University, Department of Industrial Engineering, Ankara, Turkey, cararat@bilkent.edu.tr.}
	\and
	Firdevs Ulus\thanks{Bilkent University, Department of Industrial Engineering, Ankara, Turkey, firdevs@bilkent.edu.tr.}
	\and
	Muhammad Umer\thanks{Bilkent University, Department of Industrial Engineering, Ankara, Turkey, muhammad.umer@bilkent.edu.tr.}
}
\date{\today}
\makeatletter \renewenvironment{proof}[1][\proofname] {\par\pushQED{\qed}\normalfont\topsep6\p@\@plus6\p@\relax\trivlist\item[\hskip\labelsep\bfseries#1\@addpunct{.}]\ignorespaces}{\popQED\endtrivlist\@endpefalse} \makeatother
\newcommand{\N}{\mathbb{N}}
\newcommand{\R}{\mathbb{R}}
\newcommand{\out}{\text{out}}
\newcommand{\inn}{\text{in}}
\DeclareMathOperator{\wMin}{wMin}
\DeclareMathOperator{\Min}{Min}
\DeclareMathOperator{\cl}{cl}
\DeclareMathOperator{\bd}{bd}
\DeclareMathOperator{\Int}{int}
\DeclareMathOperator{\conv}{conv}
\DeclareMathOperator{\cone}{cone}
\DeclareMathOperator{\sgn}{sgn}
\DeclareMathOperator{\recc}{recc}
\DeclareMathOperator{\rank}{rank}
\newcommand{\dom}{{\rm dom\,}}
\newcommand{\norm}[1]{\lVert#1\rVert}
\newcommand{\abs}[1]{\lvert#1\rvert}
\newtheorem{theorem}{Theorem}
\numberwithin{equation}{section}
\numberwithin{theorem}{section}
\newtheorem{proposition}[theorem]{Proposition}
\newtheorem{lemma}[theorem]{Lemma}
\newtheorem{corollary}[theorem]{Corollary}
\newtheorem{definition}[theorem]{Definition}
\newtheorem{remark}[theorem]{Remark}
\newtheorem{assumption}[theorem]{Assumption}
\newtheorem{notation}[theorem]{Notation}
\newtheorem{example}[theorem]{Example}
\newcounter{algo}
\newcommand{\of}[1]{\ensuremath{\left( #1 \right)}}
\newcommand{\cb}[1]{\ensuremath{ \left\{ #1 \right\} }}
\begin{document}
\maketitle
\thispagestyle{empty}

\begin{abstract}
We propose an algorithm to generate inner and outer polyhedral approximations to the upper image of a bounded convex vector optimization problem. It is an outer approximation algorithm and is based on solving norm-minimizing scalarizations. Unlike Pascolleti-Serafini scalarization used in the literature for similar purposes, it does not involve a direction parameter. Therefore, the algorithm is free of direction-biasedness. We also propose a modification of the algorithm by introducing a suitable compact subset of the upper image, which helps in proving for the first time the finiteness of an algorithm for convex vector optimization. The computational performance of the algorithms is illustrated using some of the benchmark test problems, which shows promising results in comparison to a similar algorithm that is based on Pascoletti-Serafini scalarization.\\
\\[-5pt]
\textbf{Keywords and phrases: }convex vector optimization, multiobjective optimization, approximation algorithm, scalarization, norm minimization.\\
\\[-5pt]
\textbf{Mathematics Subject Classification (2020): }90B50, 90C25, 90C29. 
\end{abstract}

\section{Introduction}

In multiobjective optimization, the decision-maker is supposed to consider multiple objective functions simultaneously. In general, these functions conflict in the sense that improving one objective leads to deteriorating some of the others. Consequently, there does not exist a feasible solution which can generate optimal values of all the objectives. Rather, there exists a subset of feasible solutions, called \emph{efficient solutions}, which map to the so called \emph{nondominated points} in the objective space. The image of a feasible solution is said to be nondominated if none of the objective functions can be improved in value without degrading some of the other objective values. 

In vector optimization, the objective function takes values again in a vector space, namely, the objective space. However, rather than comparing the objective function values componentwise as in the multiobjective case, a more general order relation, which is induced by an ordering cone, is used for this purpose. Clearly, multiobjective optimization can be seen as a special case where the ordering cone is the positive orthant. Assuming that the vector optimization problem (VOP) is a minimization problem with respect to an ordering cone $C$, the concept of nondominated point for a multiobjective optimization problem (MOP) is generalized to \emph{minimal point with respect to $C$} in the vector optimization case.

A special class of vector optimization problems is the linear VOPs, where the objective function is linear and the feasible region is a polyhedron. There is rich literature available discussing various methods and algorithms for solving linear VOPs. They deal with the problem by generating either the efficient solutions in the decision space \cite{evans1973revised,rudloff2017parametric} or the nondominated points in the objective space \cite{benson1998outer,lohne2011vector,lohne2017vector}. The reader is referred to the books by Ehrgott \cite{ehrgott2005mutiobjective} and by Jahn \cite{jahn2009vector} for the details of these approaches.

In 1998, Benson proposed an outer approximation algorithm for linear MOPs which generates the set of all nondominated points in the objective space rather than the set of all efficient points in the decision space \cite{benson1998outer}. 
Later, this algorithm is extended to solve linear VOPs, see \cite{lohne2011vector}. The main principle of the algorithm is that if one adds the ordering cone to the image of the feasible set, then the resulting set, called the \emph{upper image}, contains all nondominated points in its boundary. The algorithm starts with a set containing the upper image and iterates by updating the outer approximating set until it is equal to the upper image.


For nonlinear MOPs/VOPs, there is a further subdivision, namely the convex and the nonconvex problems. Note that the methods described for linear MOPs/VOPs may not be directly applicable to these classes as, in general, it is not possible to generate the set of all nondominated/minimal points in the objective space. Therefore, approximation algorithms, which approximate the set of all minimal points in the objective space, are widely explored in the literature, refer for example to the survey paper by Ruzika and Wiecek \cite{ruzika2005approximation} for the multiobjective case. 

For bounded convex vector optimization problems (CVOPs), see \Cref{sec:CVOP} for precise definitions, there are several outer approximation algorithms in the literature that work on the objective space. In \cite{ehrgott2011approximation}, the algorithm in \cite{benson1998outer} is extended for the case of convex MOPs. Another extension of Benson's algorithm for the vector optimization case is proposed in \cite{lohne2014primal}, which is a simplification and generalization of the algorithm in \cite{ehrgott2011approximation}. This algorithm has already been used for solving mixed-integer convex multiobjective optimization problems \cite{de2020solving}, as well as problems in stochastic optimization \citep{ararat2017multiobjective} and finance \citep{feinsteinrudloff2017, rudloffulus2021}. Recently, in \cite{vertexselection}, a modification of the algorithm in \cite{lohne2014primal} is proposed. The main idea of these algorithms is to generate a sequence of better approximating polyhedral supersets of the upper image until the approximation is sufficiently fine. This is done by sequentially solving some scalarization models in which the original CVOP is converted into an optimization problem with a single objective. There are many scalarization methods available in the literature for MOPs/VOPs, see for instance the book by Eichfelder \cite{eichfelder2008adaptive} as well as the recent papers \cite{burachik2017new, kasimbeyli2019comparison, meritfunction2009}.


In particular, in each iteration of the CVOP algorithms proposed in \cite{vertexselection, ehrgott2011approximation, lohne2014primal}, a \emph{Pascoletti-Serafini scalarization} \cite{pascoletti1984scalarizing}, which requires a reference point $v$ and a direction vector $d$ in the objective space as its parameters, is solved. For the algorithms in \cite{ehrgott2011approximation} and \cite{lohne2014primal}, the reference point $v$ is selected to be an arbitrary vertex of the current outer approximation of the upper image. Moreover, in \cite{ehrgott2011approximation}, the direction parameter $d$ is computed depending on the reference point $v$ together with a fixed point in the objective space, whereas it is fixed throughout the algorithm proposed in \cite{lohne2014primal}. In \cite{vertexselection}, a procedure to select a vertex $v$ as well as a direction parameter $d$, which depends on $v$ and the current approximation, is proposed.

In this study, we propose an outer approximation algorithm (\Cref{alg}) for CVOPs, which solves a norm-minimizing scalarization in each iteration. Different from Pascoletti-Serafini scalarization, it does not require a direction parameter; hence, one does not need to fix a direction parameter as in \cite{lohne2014primal}, or a point in the objective space in order to compute the direction parameter as in \cite{ehrgott2011approximation}. Moreover, when terminates, the algorithm provides the Hausdorff distance between the upper image and its outer approximation, directly.

The scalarization methods based on a norm have been frequently used in the context of MOPs, see for instance \cite{eichfelder2008adaptive}. These methods generally depend on the \emph{ideal point} at which all objectives of the MOP attain its optimal value, simultaneously. Since the ideal point is not feasible in general, the idea is to find the minimum distance from the ideal point to the image of the feasible region. One of the well-known methods is the \emph{weighted compromise programming} problem, which utilizes the $\ell_p$ norm with $p\geq 1$, see for instance \cite{lin2005min, zeleny1973compromise}. The most commonly used special case is also known as the weighted Chebyshev scalarization, where the underlying norm is taken as the $\ell_\infty$ norm, see for instance \cite{ehrgott2005multicriteria, miettinen2012nonlinear, ruzika2005approximation}. The weight vector in these scalarization problems are taken such that each component is positive. If the weight vector is taken as the vector of ones, then they are simply called compromise programming ($p\geq 1$) and Chebyshev scalarization ($p=+\infty$), respectively. 

The scalarization method that is solved in the proposed algorithm works with any norm defined on the objective space. It simply computes the distance, with respect to a fixed norm, from a given reference point in the objective space to the upper image. This is similar to compromise programming, however it has further advantages compared to it:
\begin{itemize}
	\item The reference point used in the norm-minimizing scalarization is not necessarily the ideal point, which is not well-defined for a VOP. Indeed, within the proposed algorithm, we solve it for the vertices of the outer approximation of the upper image. In weighted compromise programming, finding various nondominated points is done by varying the (nonnegative) weight parameters. It is not straightforward to generalize weighted compromise programming for a vector optimization setting, whereas this can be done directly with the proposed norm-minimizing scalarization.
\end{itemize}

We discuss some properties of the proposed scalarization under mild assumptions. In particular, we prove that if the feasible region of the VOP is solid and compact, then there exist an optimal solution to it as well as an optimal solution to its Lagrange dual. Moreover, strong duality holds between these solutions. We further prove that using a dual optimal solution, one can generate a supporting halfspace to the upper image. Note that for these results, the ordering cone is assumed to be a closed convex cone that is solid, pointed and nontrivial. However, different from the similar results regarding Pascoletti-Serafini scalarization, see for instance \cite{lohne2014primal}, the ordering cone is not necessarily polyhedral.

The main idea of \Cref{alg} is similar to the Benson-type outer approximation algorithms; iteratively, it finds better outer approximations to the upper image and stops when the approximation is sufficiently fine. As already mentioned, it solves the proposed norm-minimizing scalarization model instead of Pascoletti-Serafini scalarization. Hence, it is free of direction-biasedness. Using the properties of the norm-minimizing scalarization, we prove that the algorithm works correctly, that is, given an approximation error $\epsilon > 0$, when terminates, the algorithm returns an outer approximation to the upper image such that the Hausdorff distance between the two is less than $\epsilon$.

We also propose a modification of \Cref{alg}, namely, \Cref{alg1}. In addition to its correctness, we prove that if the feasible region is compact, then for a given approximation error $\epsilon > 0$, \Cref{alg1} stops after finitely many iterations. Note that the finiteness of outer approximation algorithms for linear VOPs are known, see for instance \cite{lohne2011vector}. Also, under compact feasible region assumption, the finiteness of an outer approximation for nonlinear (even for nonconvex) MOPs, proposed in \cite{nobakhtian2017benson}, is known. However, to the best of our knowledge, \Cref{alg1} is the first CVOP algorithm with a guarantee for finiteness. Compared to the cases of linear VOPs and nonconvex MOPs, proving the finiteness of \Cref{alg1} has the following new challenges which we address by our technical analysis:
\begin{itemize}
	\item Since the upper image is polyhedral for a linear VOP, the algorithms find exact solutions, and finiteness follows by the polyhedrality of the upper image. On the other hand, for a CVOP, we look for approximate solutions of a convex and generally nonpolyhedral upper image. Hence, the proof of finiteness requires completely different arguments.
	\item The algorithm for nonconvex MOPs in \cite{nobakhtian2017benson} constructs an outer approximation for the upper image by discarding sets of the form $\{v\}-\Int\R^q_{+}$, where $v$ is a point on the upper image (see \Cref{sec:Prelim} for precise definitions). In this case, the proof of finiteness relies on a hypervolume argument for certain small hypercubes generated by the outer approximation. In the current work, we deal with CVOPs with general ordering cones and our algorithms construct an outer approximation by intersecting certain supporting halfspaces of the upper image (instead of discarding ``point minus cone'' type sets). To prove the finiteness of \Cref{alg1}, we propose a novel hypervolume argument which exploits the relationship between these halfspaces and certain subsets of small norm balls (see \Cref{lem:Bset}). Another important challenge in using supporting halfspaces is to guarantee that the vertices of the outer approximations, which are the reference points for the scalarization models, as well as the minimal points of the upper image found by solving these scalarizations throughout the algorithm are within a compact set. Note that this is naturally the case in \cite{nobakhtian2017benson} by the structure of their outer approximations. For our proposed algorithm, we construct sufficiently large compact sets $S$ and $S_2$ such that the vertices and the corresponding minimal points of the upper image are within $S$ and $S_2$, respectively (see Lemmas \ref{lem:PcapScompact}, \ref{lem:S2} and \Cref{rem:Scontains}).
\end{itemize}

The rest of the paper is organized as follows. In \Cref{sec:Prelim}, we introduce the notation of the paper and recall some well-known concepts and results in convex analysis. In \Cref{sec:CVOP}, we present the setting for CVOP, \rev{discuss an} approximate solution concept \rev{from the literature}. This is followed by a detailed treatment of norm-minimizing scalarizations in \Cref{sec:scal}, including some duality results as well as geometric properties of optimal solutions. Sections \ref{sec:alg} and \ref{sec:alg2} are devoted to Algorithms \ref{alg} and \ref{alg1}, respectively, where we prove their correctness. The theoretical analysis of \Cref{alg1} continues in \Cref{sec:fin}, which concludes with the proof of finiteness for this algorithm. We provide several examples and discuss the computational performance of the proposed algorithms on these examples in \Cref{sec:experiments}. We conclude the paper in \Cref{sec:conclusions}.

	\section{Preliminaries} \label{sec:Prelim}

In this section, we describe the notations and definitions which will be used throughout the paper. Let $q\in\N:=\{1,2,\ldots\}$. We denote by $\R^q$ the $q$-dimensional Euclidean space. When $q=1$, we have the real line $\R:=\R^1$, and the extended real line $\overline{\R}:=\R\cup\{+\infty\}\cup\{-\infty\}$. On $\R^q$, we fix an arbitrary norm $\norm{\cdot}$, and we denote its dual norm by $\norm{\cdot}_\ast$. We will sometimes assume that $\norm{\cdot}=\norm{\cdot}_p$ is the $\ell^p$-norm on $\R^q$, where $p\in [1,+\infty]$. For $y\in\R^q$, the $\ell^p$-norm of $y$ is defined by $\norm{y}_p := (\sum_{i=1}^{q} \abs{y_i}^p)^{\frac{1}{p}}$ when $p\in [1,+\infty)$, and by $\norm{y}_{p}:=\max_{i\in\{1,\ldots,q\}}\abs{y_i}$ when $p=+\infty$. In this case, the dual norm is $\norm{\cdot}_\ast=\norm{\cdot}_{p^\prime}$, where $p^\prime\in[1,+\infty]$ is the conjugate exponent of $p$ via the relation $\frac{1}{p}+\frac{1}{p^\prime}=1$. \rev{For $\epsilon>0$, we define the closed ball $\mathbb{B}_{\epsilon}:=\{z\in\R^q\mid \norm{z}\leq \epsilon\}$ centered at the origin.}

Let $f \colon \mathbb{R}^q \rightarrow \overline{\R}$ be a convex function and $y_0 \in \R^q$ with $f(y_0) \in \R$. The set $\partial f(y_0):=\{z\in\R^q\mid \forall y\in\R^q\colon f(y)\geq  f(y_0)+z^{\mathsf{T}}(y-y_0)\}$ is called the \emph{subdifferential} of $f$ at $y_0$. 

For a set $A \subseteq \mathbb{R}^q$, we denote by $\Int A$, $\cl A$, $\bd A$, $\conv A$, $\cone A$, the interior, closure, boundary, convex hull, conic hull of $A$, respectively. A recession direction of $A$ is a vector $k \in \R^q\setminus\{0\}$ satisfying $A + \{\lambda k \in \R^q \mid \lambda > 0 \} \subseteq A$. The set of all recession directions of $A$, $\recc A = \{k\in \R^q \mid \forall a\in A, \forall \lambda \geq 0: \: a +\lambda k \in A \}$, is the \emph{recession cone} of $A$. If $A,B\subseteq\R^q$ are nonempty sets and $\lambda\in\R$, then we define the Minkowski operations $A+B:=\{y_1+y_2\mid y_1\in A, y_2\in B\}$, $\lambda A:=\{\lambda y\mid y\in A\}$, $A-B:=A+(-1)B$.

Let $C\subseteq\R^q$ be a convex cone. The set $C^+ := \{z \in \mathbb{R}^q \mid \forall y \in C : z^\mathsf{T}y \geq 0 \}$ is a closed convex cone, and it is called the \emph{dual cone} of $C$. The cone $C$ is said to be \emph{solid} if $\Int C\neq\emptyset$, \emph{pointed} if it does not contain any lines, and \emph{nontrivial} if $\{0\} \subsetneq C \subsetneq\R^q$. If $C$ is a solid pointed nontrivial cone, then the relation $\leq_C$ on $\R^q$ defined by $y_1 \leq_C y_2 \iff y_2 - y_1 \in C$ for every $y_1,y_2\in\R^q$ is a partial order. Let $X \subseteq \R^n$ be a convex set, where $n \in \N$. A function $\Gamma\colon X \rightarrow \R^q$ is said to be \emph{$C$-convex} if $\Gamma(\lambda x_1 + (1-\lambda)x_2) \leq_C \lambda \Gamma(x_1) + (1 - \lambda) \Gamma(x_2)$ for every $x_1, x_2 \in X, \lambda \in [0,1]$. In this case, the function $x\mapsto w^{\mathsf{T}}\Gamma(x)$ on $X$ is convex for every $w\in C^+$. Let $\mathcal{X}\subseteq X$. Then, the set $\Gamma(\mathcal{X}):=\{\Gamma(x)\mid x\in\mathcal{X}\}$ is the image of $\mathcal{X}$ under $\Gamma$. The function $I_{\mathcal{X}}:\R^q\to [0,+\infty]$ defined by $I_{\mathcal{X}}(x)=0$ whenever $x\in\mathcal{X}$ and by $I_{\mathcal{X}}(x)=+\infty$ whenever $x\in\R^q\setminus\mathcal{X}$ is called the \emph{indicator function} of $\mathcal{X}$.

Let $A \subseteq \R^q$ be a nonempty set. A point $y\in A$ is called a \emph{$C$-minimal element} of $A$ if $(\{y\} - C\setminus\{0\}) \cap A = \emptyset$. If the cone $C$ is solid, then $y$ is called a \emph{weakly $C$-minimal element} of $A$ if $(\{y\} -\Int C) \cap A = \emptyset$. We denote by $\Min_C(A)$ the set of all $C$-minimal elements of $A$, and by $\wMin_C(A)$ the set of all weakly $C$-minimal elements of $A$ whenever $C$ is solid. 

For each $z\in\R^q$, we define $d(z,A):=\inf_{y\in A}\norm{z-y}$. Let $B\subseteq\R^q$ be a nonempty set. We denote by $\delta^H(A,B)$ the Hausdorff distance between $A,B$. It is well-known that \cite[Proposition 3.2]{hausdorffsurvey}
\begin{align}\label{eq:Hausdorff}
\delta^H(A, B) \negthinspace=\negthinspace \max\cb{\sup_{y \in A}d(y,B), \sup_{z\in B}d(z,A)}\negthinspace=\negthinspace\inf \cb{\epsilon > 0 \mid  A \subseteq  B+\mathbb{B}_{\epsilon},\ B \subseteq A+\mathbb{B}_{\epsilon} }.
\end{align}

Suppose that $A$ is a convex set and let $y\in A$, $w\in\R^q\setminus\{0\}$. If $w^{\mathsf{T}}y=\inf_{z\in A}w^{\mathsf{T}}z$, then the set $\{z \in \R^q\mid w^{\mathsf{T}}z = w^{\mathsf{T}}y\}$ is called a \emph{supporting hyperplane} of $A$ at $y$ and the set $\{z \in \R^q\mid w^{\mathsf{T}}z \geq w^{\mathsf{T}}y\}\supseteq A$ is called a \emph{supporting halfspace} of $A$ at $y$.

Suppose that $A$ is a polyhedral closed convex set. The representation of $A$ as the intersection of finitely many halfspaces, that is, as
$A = \bigcap_{i = 1}^r\{y \in \mathbb{R}^q \mid (w^i)^{\mathsf{T}}y \geq a^i\}$
for some $r \in \mathbb{N}$, $w^i \in \mathbb{R}^q \setminus \{0\}$ and $a^i \in \mathbb{R}$, $i \in \{1, \dots, r\}$, is called \rev{an} \emph{$H$-representation} of $A$. Alternatively, $A$ is uniquely determined by a finite set $\{y^1, \dots, y^s\}\subseteq \R^q$ of vertices and a finite set $\{d^1, \dots, d^t\}\subseteq\R^q$ of directions via
$A = \conv \{y^1, \dots, y^s\} + \conv \cone \{d^1, \dots, d^t\},$
which is called \rev{a} \emph{$V$-representation} of $A$.


\section{Convex vector optimization} \label{sec:CVOP}

We consider a \emph{convex vector optimization problem (CVOP)} of the form
\begin{align}
\text{minimize } \Gamma(x) \text{ with respect to } \leq_C \text{ subject to } x \in \mathcal{X}, \tag{P}\label{eq:P}
\end{align}
where $C\subseteq \R^q$ is the \emph{ordering cone} of the problem, $\Gamma\colon X \to \mathbb{R}^q$ is the vector-valued \emph{objective function} defined on a convex set $X \subseteq \mathbb{R}^n$, and $\mathcal{X}\subseteq X$ is the \emph{feasible region}. The conditions we impose on $C, \Gamma, \mathcal{X}$ are stated in the next assumption.

\begin{assumption}\label{assmp}
	The following statements hold.
	\begin{enumerate}[(a)]
		\item $C$ is a closed convex cone that is also solid, pointed, and nontrivial. 
		\item $\Gamma$ is a $C$-convex and continuous function.
		\item $\mathcal{X}$ is a compact convex set with $\Int \mathcal{X}\neq \emptyset$.
	\end{enumerate}
\end{assumption}

The set
$\mathcal{P} := \cl (\Gamma(\mathcal{X}) + C)$ 
is called the \emph{upper image} of \eqref{eq:P}. Clearly, $\mathcal{P}$ is a closed convex set with $\mathcal{P}=\mathcal{P}+C$.

\begin{remark}\label{rem:compact}
	Note that, under Assumption \ref{assmp}, $\Gamma(\mathcal{X})$ is a compact set as the image of a compact set under a continuous function. Then, $\Gamma(\mathcal{X}) + C$ is a closed set as the algebraic sum of a compact set and a closed set \cite[Lemma~5.2]{guide2006infinite}. Hence, 
	we have
$	\mathcal{P}  = \Gamma(\mathcal{X}) + C$. 
\end{remark}

We recall the notion of boundedness for CVOP next.

\begin{definition}\label{defn:Boundedness}\cite[Definition 3.1]{lohne2014primal}
	\eqref{eq:P} is called bounded if $\mathcal{P} \subseteq \{y\} + C$ for some $y \in \mathbb{R}^q$.
\end{definition}

In view of \Cref{rem:compact}, it follows that \eqref{eq:P} is bounded under \Cref{assmp}.

The next definition recalls the relevant solution concepts for CVOP.

\begin{definition}\label{defn:minimizer}\cite[Definition 7.1]{heyde2011solution}
	A point $\bar{x} \in \mathcal{X}$ is said to be a \emph{(weak) minimizer} for \eqref{eq:P} if $\Gamma(\bar{x})$ is a (weakly) $C$-minimal element of $\Gamma(\mathcal{X})$. A nonempty set $\bar{\mathcal{X}} \subseteq \mathcal{X}$ is called an \emph{infimizer} of \eqref{eq:P} if $\cl \conv (\Gamma(\bar{\mathcal{X}}) + C) = \mathcal{P}$. An infimizer $\bar{\mathcal{X}}$ of \eqref{eq:P} is called a \emph{(weak) solution} of \eqref{eq:P} if it consists of only (weak) minimizers.
\end{definition}

In CVOP, it may be difficult or impossible to compute a solution in the sense of \Cref{defn:minimizer}, in general. Hence, we \rev{consider} the following notion of approximate solution.

\begin{definition}\rev{\cite[Definition 3.3]{vertexselection}}\label{defn:finite epsilon-solution}
	Suppose that \eqref{eq:P} is bounded and let $\epsilon>0$. Let $\bar{\mathcal{X}} \subseteq \mathcal{X}$ be a nonempty finite set and define $\mathcal{\bar{P}} :=\conv \Gamma(\bar{\mathcal{X}}) + C $. The set $\bar{\mathcal{X}}$ is called a \emph{finite $\epsilon$-infimizer} of \eqref{eq:P} if
	$\mathcal{\bar{P}} + \mathbb{B}_{\epsilon}  \supseteq \mathcal{P}$. 
	The set $\bar{\mathcal{X}}$ is called a \emph{finite (weak) $\epsilon$-solution} of \eqref{eq:P} if it is an $\epsilon$-infimizer that consists of only (weak) minimizers.
\end{definition}

For a finite (weak) $\epsilon$-solution $\bar{\mathcal{X}}$, it is immediate from \Cref{defn:finite epsilon-solution} that
\begin{equation}\label{eq:innerouter}
\mathcal{\bar{P}} + \mathbb{B}_{\epsilon} \supseteq \mathcal{P} \supseteq  \bar{\mathcal{P}}. 
\end{equation}
Hence, $\bar{\mathcal{X}}$ provides an inner and an outer approximation for the upper image $\mathcal{P}$.

\begin{remark}\label{rem:Hausdorff} 
	\rev{In \cite[Definition 3.3]{vertexselection} the statement of the definition is slightly different. Instead of $\mathcal{\bar{P}} + \mathbb{B}_{\epsilon} \supseteq \mathcal{P}$, the requirement is given as $\delta^H(\mathcal{P}, \mathcal{\bar{P}}) \leq \epsilon$. } 
\rev{However, both yield equivalent definitions.} Indeed, by \eqref{eq:innerouter} we have $\mathcal{P} \subseteq \mathcal{\bar{P}} + \mathbb{B}_{\epsilon}$ as well as $\bar{\mathcal{P}} \subseteq \mathcal{P}\subseteq \mathcal{P} + \mathbb{B}_{ \epsilon}$. Then, 
	\rev{$\delta^H(\mathcal{P}, \mathcal{\bar{P}}) \leq \epsilon$} follows by \eqref{eq:Hausdorff}. 
	\rev{The converse holds similarly by \eqref{eq:Hausdorff}.} 
\end{remark}


Given $w \in C^+\setminus\{0\}$, the following convex program is the well-known \emph{weighted sum scalarization} of \eqref{eq:P}:
\begin{equation}\label{eq:P1(w)}
\text{minimize } w^\mathsf{T}\Gamma(x) \text{ subject to }x \in \mathcal{X} .  \tag{WS$(w)$} 
\end{equation}

The following proposition is a standard result in vector optimization, it formulates the connection between weighted sum scalarizations and weak minimizers.

\begin{proposition}\label{prop:jahn2009vector} \cite[Corollary 2.3]{jahn1984}
	Let $w \in C^+ \setminus \{0\}$. Then, every optimal solution of \eqref{eq:P1(w)} is a weak minimizer of \eqref{eq:P}.
\end{proposition}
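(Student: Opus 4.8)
The plan is to prove the contrapositive-free direct statement: let $\bar{x}$ be an optimal solution of \eqref{eq:P1(w)} for some fixed $w \in C^+\setminus\{0\}$, and show that $\Gamma(\bar{x})$ is a weakly $C$-minimal element of $\Gamma(\mathcal{X})$, i.e.\ that $(\{\Gamma(\bar{x})\} - \Int C) \cap \Gamma(\mathcal{X}) = \emptyset$. First I would argue by contradiction: suppose $\Gamma(\bar{x})$ is \emph{not} weakly $C$-minimal, so there exists $x \in \mathcal{X}$ with $\Gamma(x) \in \{\Gamma(\bar{x})\} - \Int C$, that is, $\Gamma(\bar{x}) - \Gamma(x) \in \Int C$.

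The key step is then to exploit the interaction between $w \in C^+\setminus\{0\}$ and the interior of $C$. The crucial fact is that for a nonzero dual vector $w \in C^+$, the linear functional $z \mapsto w^{\mathsf{T}} z$ is strictly positive on $\Int C$; that is, $w^{\mathsf{T}}z > 0$ for every $z \in \Int C$. I would justify this from the definition of the dual cone $C^+$ (which gives $w^{\mathsf{T}} z \geq 0$ on all of $C$) together with the observation that if $w^{\mathsf{T}} \hat{z} = 0$ for some $\hat{z} \in \Int C$, then a small perturbation $\hat{z} - \lambda w$ would remain in $C$ (by openness) while yielding $w^{\mathsf{T}}(\hat{z} - \lambda w) = -\lambda \norm{w}_2^2 < 0$, contradicting $w \in C^+$. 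Applying this to $z := \Gamma(\bar{x}) - \Gamma(x) \in \Int C$ gives $w^{\mathsf{T}}\Gamma(\bar{x}) - w^{\mathsf{T}}\Gamma(x) > 0$, i.e.\ $w^{\mathsf{T}}\Gamma(x) < w^{\mathsf{T}}\Gamma(\bar{x})$.

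This last inequality contradicts the optimality of $\bar{x}$ for \eqref{eq:P1(w)}, since $x \in \mathcal{X}$ would be a strictly better feasible point. Hence no such $x$ exists, and $\Gamma(\bar{x})$ is weakly $C$-minimal, so $\bar{x}$ is a weak minimizer by \Cref{defn:minimizer}.

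The main obstacle, and really the only nontrivial ingredient, is the strict-positivity lemma $w^{\mathsf{T}}z > 0$ for all $z \in \Int C$ when $w \in C^+\setminus\{0\}$; everything else is an immediate consequence of linearity and the definition of optimality. Since the cited reference \cite{jahn1984} presumably states this as a standard corollary, I would expect the actual proof to be very short and perhaps to invoke this separation property directly rather than re-deriving it. One caveat I would keep in mind is that solidness of $C$ (Assumption \ref{assmp}(a)) guarantees $\Int C \neq \emptyset$ so that weak minimality is well-defined, but the argument itself only needs the strict positivity on the (possibly relative) interior and does not require pointedness.
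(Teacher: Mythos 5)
Your proof is correct, but note that the paper itself offers no proof of this proposition: it is quoted from the literature (Jahn 1984, Corollary 2.3) with a citation only, so there is no in-paper argument to compare against. What you have written is the standard self-contained derivation of that classical result, and both of its steps are sound: negating weak $C$-minimality of $\Gamma(\bar{x})$ produces $x \in \mathcal{X}$ with $\Gamma(\bar{x}) - \Gamma(x) \in \Int C$, and your strict-positivity lemma ($w^{\mathsf{T}}z > 0$ for every $z \in \Int C$ whenever $w \in C^+ \setminus \{0\}$), established via the perturbation $\hat{z} - \lambda w \in C$ for small $\lambda > 0$, then yields $w^{\mathsf{T}}\Gamma(x) < w^{\mathsf{T}}\Gamma(\bar{x})$, contradicting optimality of $\bar{x}$ in \eqref{eq:P1(w)}.

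One caveat on your closing remark: strict positivity of $w^{\mathsf{T}}\cdot$ on the \emph{relative} interior of $C$ is false in general. Take $C = \R_+ \times \{0\} \subseteq \R^2$, $w = (0,1)^{\mathsf{T}} \in C^+ \setminus \{0\}$, and $z = (1,0)^{\mathsf{T}} \in \ri C$; then $w^{\mathsf{T}}z = 0$. Your perturbation argument genuinely requires $z$ to lie in the full interior, since the direction $-w$ need not belong to the linear hull of $C$. This costs you nothing here, because Assumption \ref{assmp}(a) makes $C$ solid, which is exactly the hypothesis under which weak minimality is defined in the paper, but the parenthetical ``(possibly relative)'' should be deleted.
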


For the new notion of approximate solution in \Cref{defn:finite epsilon-solution}, we prove an existence result.

\begin{proposition}\label{prop:solution exists}
	Suppose that Assumption \ref{assmp} holds. Then, there exists a solution of \eqref{eq:P}. Moreover, for every $\epsilon > 0$, there exists a finite $\epsilon$-solution of \eqref{eq:P}.
	
	\begin{proof}
		The existence of a solution $\bar{\mathcal{X}}$ of \eqref{eq:P} follows by \cite[Proposition 4.2]{lohne2014primal}.
		\rev{By \cite[Proposition 4.3]{lohne2014primal}, for every $\epsilon > 0$, there exists a finite $\epsilon$-solution of \eqref{eq:P} in the sense of \cite[Definition 3.3]{lohne2014primal}. By \cite[Remark 3.4]{vertexselection}, an $\epsilon$-solution in the sense of \cite[Definition 3.3]{lohne2014primal} is also an $\epsilon$-solution in the sense of \Cref{defn:finite epsilon-solution}. Hence, the result follows.}
	\end{proof}
	
\end{proposition}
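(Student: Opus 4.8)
The plan is to bypass the scalarization machinery and argue directly from the order structure, producing the \emph{strong} versions (solutions consisting of minimizers) that the statement requires. The cornerstone is the identity $\mathcal{P}=\Min_C(\mathcal{P})+C$, together with the observation that $\Min_C(\mathcal{P})=\Min_C(\Gamma(\mathcal{X}))$, so that $\bar{\mathcal{X}}:=\{x\in\mathcal{X}\mid \Gamma(x)\in\Min_C(\Gamma(\mathcal{X}))\}$ is precisely the set of minimizers of \eqref{eq:P}. The equality $\Min_C(\mathcal{P})=\Min_C(\Gamma(\mathcal{X}))$ follows from $\mathcal{P}=\Gamma(\mathcal{X})+C$ (\Cref{rem:compact}) and the pointedness of $C$: a minimal point of $\mathcal{P}$ cannot be written as $\gamma+c$ with $\gamma\in\Gamma(\mathcal{X})$ and $c\in C\setminus\{0\}$, hence it lies in $\Gamma(\mathcal{X})$, and the two notions of minimality then coincide.

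For the first assertion I would show that every $y\in\mathcal{P}$ dominates a minimal point. Fix $y\in\mathcal{P}$ and consider the order interval $K_y:=(\{y\}-C)\cap\mathcal{P}$. Since \eqref{eq:P} is bounded, $\mathcal{P}\subseteq\{y_0\}+C$ for some $y_0\in\R^q$, so $K_y\subseteq(\{y_0\}+C)\cap(\{y\}-C)$; this set is closed, and it is bounded because $C$ is closed and pointed (an unbounded order interval would produce, after normalization, a nonzero recession direction lying in $C\cap(-C)$). Thus $K_y$ is nonempty (it contains $y$) and compact. Choosing any $w\in\Int C^+$ — nonempty since $C$ is pointed — and minimizing $z\mapsto w^{\mathsf{T}}z$ over $K_y$ yields a point $m$ that is $C$-minimal in $K_y$, and minimality in $K_y$ upgrades to minimality in $\mathcal{P}$ because any point of $\mathcal{P}$ lying below $m$ would already belong to $K_y$. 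Hence $y\in\{m\}+C\subseteq\Min_C(\mathcal{P})+C$, giving $\mathcal{P}\subseteq\Min_C(\mathcal{P})+C\subseteq\mathcal{P}$. Consequently $\Gamma(\bar{\mathcal{X}})+C=\Min_C(\Gamma(\mathcal{X}))+C=\mathcal{P}$, and since $\mathcal{P}$ is closed and convex this gives $\cl\conv(\Gamma(\bar{\mathcal{X}})+C)=\mathcal{P}$; that is, $\bar{\mathcal{X}}$ is an infimizer consisting of minimizers, i.e., a solution.

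For the finite $\epsilon$-solution I would discretize $M:=\Min_C(\Gamma(\mathcal{X}))$. Because $M\subseteq\Gamma(\mathcal{X})$ and $\Gamma(\mathcal{X})$ is compact, $M$ is totally bounded, so for a given $\epsilon>0$ there is a finite set $F=\{y^1,\dots,y^m\}\subseteq M$ with $M\subseteq F+\mathbb{B}_{\epsilon}$. Each $y^i$ is the image of a minimizer $x^i$; set $\bar{\mathcal{X}}:=\{x^1,\dots,x^m\}$ and $\bar{\mathcal{P}}=\conv\Gamma(\bar{\mathcal{X}})+C=\conv F+C$. To verify $\bar{\mathcal{P}}+\mathbb{B}_{\epsilon}\supseteq\mathcal{P}$, take $y\in\mathcal{P}=\conv(M)+C$ and write it, by Carath\'eodory, as $y=\sum_j\lambda_j m_j+c$ with $m_j\in M$, $\lambda_j\geq 0$, $\sum_j\lambda_j=1$, and $c\in C$. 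Replacing each $m_j=f_j+b_j$ with $f_j\in F$ and $\norm{b_j}\leq\epsilon$ yields $y=\sum_j\lambda_j f_j+c+\sum_j\lambda_j b_j$, where the first two terms lie in $\bar{\mathcal{P}}$ and the last has norm at most $\epsilon$. Hence $\mathcal{P}\subseteq\bar{\mathcal{P}}+\mathbb{B}_{\epsilon}$, so $\bar{\mathcal{X}}$ is a finite $\epsilon$-infimizer consisting of minimizers, i.e., a finite $\epsilon$-solution.

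The main obstacle is the first assertion, specifically the compactness of the order intervals $K_y$ and the existence of a minimal element below an arbitrary point; everything hinges on pointedness and closedness of $C$, and one must take the minimizing functional from $\Int C^+$ (nonempty precisely because $C$ is pointed) so that the selected point is genuinely $C$-minimal rather than merely weakly minimal. A more classical alternative would use weighted-sum scalarizations: \Cref{prop:jahn2009vector} produces weak minimizers from $w\in C^+\setminus\{0\}$, and a separation argument (any hyperplane separating a point of $\mathcal{P}$ from $\cl\conv(\Gamma(\bar{\mathcal{X}})+C)$ must have normal in $C^+$) shows that the set of all weak minimizers is an infimizer; however, that route delivers only a \emph{weak} solution, and upgrading it to the minimizer-based notion required here is exactly the delicate point that the direct order-interval argument settles cleanly. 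The finite-net step is then routine, since $M$ is a bounded subset of $\Gamma(\mathcal{X})$.
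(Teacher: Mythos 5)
Your proof is correct, and it takes a genuinely different route from the paper's. The paper settles both claims purely by citation: existence of a solution is \cite[Proposition 4.2]{lohne2014primal}, and existence of a finite $\epsilon$-solution is \cite[Proposition 4.3]{lohne2014primal}, which produces an $\epsilon$-solution in the sense of \cite[Definition 3.3]{lohne2014primal} and must then be translated into the norm-ball notion of \Cref{defn:finite epsilon-solution} via \cite[Remark 3.4]{vertexselection}. You instead argue from scratch: (i) $\Min_C(\mathcal{P})=\Min_C(\Gamma(\mathcal{X}))$, using pointedness of $C$ and $\mathcal{P}=\Gamma(\mathcal{X})+C$; (ii) the domination property $\mathcal{P}=\Min_C(\mathcal{P})+C$, obtained by minimizing some $w\in\Int C^+$ over the order interval $K_y=(\{y\}-C)\cap\mathcal{P}$, which is compact because $\recc\big((\{y_0\}+C)\cap(\{y\}-C)\big)=C\cap(-C)=\{0\}$ by closedness and pointedness of $C$ together with boundedness of \eqref{eq:P}; and (iii) a finite $\epsilon$-net $F\subseteq M:=\Min_C(\Gamma(\mathcal{X}))$ of the bounded set $M$, giving $\conv F+C+\mathbb{B}_{\epsilon}\supseteq M+C=\mathcal{P}$. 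The two delicate points are handled correctly: you take $w$ from $\Int C^+$ (nonempty precisely because $C$ is pointed), so the selected point is genuinely $C$-minimal rather than merely weakly minimal, and your net argument works natively with $\mathbb{B}_{\epsilon}$, so no conversion between solution concepts is needed. What each approach buys: the paper's proof is two lines and defers the real work to the literature; yours is self-contained, exhibits an explicit solution (the set of all minimizers), and proves the $\epsilon$-statement directly in the form required by \Cref{defn:finite epsilon-solution}. Two minor streamlinings are possible: Carath\'eodory is unnecessary in step (iii), since $\mathcal{P}=M+C$ lets you write $y=m+c$ and replace $m$ by its net point directly; and the nonemptiness of your $\bar{\mathcal{X}}$ (required by \Cref{defn:minimizer}) deserves an explicit word, though it is immediate from (ii) since $\mathcal{P}\neq\emptyset$.
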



\section{Norm-minimizing scalarization} \label{sec:scal}

In this section, we describe the norm-minimizing scalarization model that we use in our proposed algorithm and provide some analytical results regarding this scalarization.

Let us fix an arbitrary norm $\norm{\cdot}$ on $\R^q$ and a point $v\in\R^q$. We consider the \emph{norm-minimizing scalarization} of \eqref{eq:P} given by
\begin{equation}\label{eq:P2(v)}
\text{minimize } \norm{z} \text{ subject to } \Gamma(x) - z - v \leq_C 0,\ x \in \mathcal{X},\ z \in \R^q.\tag{P$(v)$} 
\end{equation}
Note that this is a convex program.


\begin{remark}\label{rem:dist}
	The optimal value of \eqref{eq:P2(v)} is equal to $d(v,\mathcal{P})$, the distance of $v$ to the upper image $\mathcal{P}$. Indeed, by \Cref{rem:compact}, we have
	\begin{align}\notag 
	d(v,\mathcal{P})&=\inf_{y\in\mathcal{P}}\norm{v-y}=\inf\{\norm{z}\mid v+z\in\mathcal{P},\ z\in \R^q\}\\
	&=\inf\cb{\norm{z}\mid v+z\in \{\Gamma(x)\}+C,\ x\in\mathcal{X},\ z\in\R^q}\\
	&=\inf\cb{\norm{z}\mid \Gamma(x)-v-z\leq_C 0,\ x\in\mathcal{X},\ z\in\R^q}.
	\end{align}
\end{remark} 

In order to derive the Lagrangian dual of \eqref{eq:P2(v)}, we first pass to an equivalent formulation of \eqref{eq:P2(v)}. To that end, let us define a scalar function $f\colon X \times \R^q \to \overline{\R}$ and a set-valued function $G : X \times \mathbb{R}^q \rightrightarrows \mathbb{R}^q$ by
\[
f(x, z) := \norm{z} + I_{\mathcal{X}}(x),\quad G(x, z) := \{ \Gamma(x) - z - v \},\quad x\in X, z\in\R^q.
\]
Note that \eqref{eq:P2(v)} is equivalent to the following problem:
\begin{equation*} \label{eq:Pprime}
\text{minimize } f(x, z) \text{ subject to }   G(x, z) \cap -C \neq \emptyset,\  (x, z) \in X \times \mathbb{R}^q.      \tag{P$'(v)$}
\end{equation*}

To use the results from \cite[Section~3.3.1]{lohne2011vector} and \cite[Section~8.3.2]{khan2016set} for convex programming with set-valued constraints, we define the Lagrangian $L \colon X \times \R^q \times \R^q \rightarrow \overline{\R}$ for \eqref{eq:Pprime} by
\begin{equation}\label{eq:lagrangian}
L(x,z,w) := f(x, z) + \inf_{ u \in G(x, z) +C} w^{\mathsf{T}}u,\quad (x,z,w)\in X \times \R^q \times \R^q.
\end{equation}
Then, the dual objective function $\phi \colon \R^q \rightarrow \overline{\R}$ is defined by 
\[
\phi(w):= \inf_{x \in X, z \in \R^q} L(x,z,w),\quad w\in \R^q.
\]
By the definitions of $f, G$ and using the fact that $\inf_{ c \in C}  w^{\mathsf{T}}c = - I_{C^+}(w)$ for every $w\in\R^q$, we obtain
\begin{align}\notag 
\phi(w) = \begin{cases}
\inf_{x \in \mathcal{X}, z \in \R^q} \of{\norm{z} + w^{\mathsf{T}}(\Gamma(x) - z - v) } & \text{if } w \in C^+, \\
- \infty & \text{otherwise}.
\end{cases}
\end{align} 
Finally, the dual problem of \eqref{eq:Pprime} is formulated as
\begin{align}\label{eq:D2(v)} 
\text{maximize } \phi(w) \text{ subject to }w\in\R^q.\tag{D$(v)$} 
\end{align}
Then, the optimal value of \eqref{eq:D2(v)} is given by
\begin{align} \label{eq:pre_dualpr}
\sup_{w \in \R^q} \phi(w) &= \sup_{w \in C^+} \of{ \inf_{x \in \mathcal{X}} w^{\mathsf{T}}\Gamma(x) - \sup_{z \in \mathbb{R}^q} \of{w^{\mathsf{T}}z - \norm{z}}  - w^{\mathsf{T}}v }\\
&=\sup \cb{ \inf_{x \in \mathcal{X}} w^{\mathsf{T}}\Gamma(x) - w^{\mathsf{T}}v \mid \norm{w}_\ast \leq 1,\ w \in C^+} \notag 
\end{align}
since the conjugate function of $\norm{\cdot}$ is the indicator function of the unit ball of the dual norm $\norm{\cdot}_\ast$; see, for instance, \cite[Example 3.26]{boyd2004convex}.

The next proposition shows the strong duality between \eqref{eq:P2(v)} and \eqref{eq:D2(v)}.

\begin{proposition}\label{prop:optsol}
	Under Assumption \ref{assmp}, for every $v \in \mathbb{R}^q$, there exist optimal solutions $(x^v, z^v)$ and $w^v$ of problems \eqref{eq:P2(v)} and \eqref{eq:D2(v)}, respectively, and the optimal values coincide. 
\end{proposition}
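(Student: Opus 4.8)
The plan is to prove the three assertions---existence of a primal optimizer, existence of a dual optimizer, and equality of the optimal values---largely independently, combining a projection argument for the primal problem with the convex (set-valued) duality framework cited just before the statement.

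First I would handle primal attainment. By \Cref{rem:dist}, the optimal value of \eqref{eq:P2(v)} equals $d(v,\mathcal{P})$, which is finite since $\mathcal{P}\neq\emptyset$. By \Cref{rem:compact}, $\mathcal{P}=\Gamma(\mathcal{X})+C$ is a nonempty closed convex subset of $\R^q$, so the infimum $\inf_{y\in\mathcal{P}}\norm{v-y}$ is attained: one may restrict the minimization to $\{y\in\mathcal{P}\mid\norm{v-y}\leq d(v,\mathcal{P})+1\}$, which is closed and bounded, hence compact, so the continuous map $y\mapsto\norm{v-y}$ attains its minimum at some $y^\ast\in\mathcal{P}$. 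Using the \emph{exact} representation $\mathcal{P}=\Gamma(\mathcal{X})+C$, I would write $y^\ast=\Gamma(x^v)+c$ for some $x^v\in\mathcal{X}$ and $c\in C$, and set $z^v:=y^\ast-v$. Then $\Gamma(x^v)-z^v-v=-c\in-C$, so $(x^v,z^v)$ is feasible for \eqref{eq:P2(v)} with objective value $\norm{z^v}=\norm{y^\ast-v}=d(v,\mathcal{P})$, hence optimal.

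Next I would treat the dual. Dual attainment can be read off directly from \eqref{eq:pre_dualpr}: the dual optimal value equals the supremum over $\{w\in C^+\mid\norm{w}_\ast\leq 1\}$ of $w\mapsto\inf_{x\in\mathcal{X}}w^{\mathsf{T}}\Gamma(x)-w^{\mathsf{T}}v$. This feasible set is compact, while the objective, being a pointwise infimum of affine functions of $w$, is concave, upper semicontinuous, and finite on it (as $\mathcal{X}$ is compact and $\Gamma$ continuous); so a maximizer $w^v$ exists by the Weierstrass theorem. For the zero duality gap, I would invoke the strong duality results for convex programs with set-valued/cone constraints from \cite[Section~3.3.1]{lohne2011vector} and \cite[Section~8.3.2]{khan2016set}. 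The hypotheses to verify are convexity of $f$ and $C$-convexity of the constraint map $(x,z)\mapsto\Gamma(x)-z-v$ (both immediate from \Cref{assmp}, since the affine part is $C$-convex), finiteness of the primal value (shown above), and a Slater-type constraint qualification. Slater holds because $C$ is solid: fixing any $x_0\in\mathcal{X}$ and choosing $z_0\in\{\Gamma(x_0)-v\}+\Int C$ gives $\Gamma(x_0)-z_0-v\in-\Int C$, a strictly feasible point. With this, the cited theorems yield equality of the primal and dual optimal values.

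The main obstacle I anticipate is not any hard estimate but rather aligning the present formulation with the abstract duality framework of the references and confirming that the generalized Slater condition is stated in the correct form for the cone $C$. In particular, one must ensure that the cited theorems deliver the zero gap under precisely the interior-point condition verified above, and that the Lagrangian in \eqref{eq:lagrangian} together with the reduction in \eqref{eq:pre_dualpr} matches their dual construction. Once this bookkeeping is settled, the three claims follow as outlined.
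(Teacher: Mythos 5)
Your proof is correct, but it redistributes the work differently from the paper. The paper establishes primal attainment by adding the redundant constraint $\norm{z}\leq\norm{\tilde z}$ (with $\tilde z=\Gamma(\tilde x)-v$ for a fixed feasible $\tilde x$) to compactify the feasible region in $\R^{n+q}$ and then applying Weierstrass; you instead project $v$ onto the closed set $\mathcal{P}=\Gamma(\mathcal{X})+C$ and lift the projection $y^\ast$ back to a feasible pair $(x^v,z^v)$ via the exact (non-closure) representation from \Cref{rem:compact} — both are sound, with yours leaning a bit more on \Cref{rem:compact} and \Cref{rem:dist}. The more substantive difference is on the dual side: the paper obtains \emph{both} the zero gap and dual attainment in one stroke from \cite[Theorem~3.19]{lohne2011vector}, after verifying the constraint qualification \eqref{eq:CQset}, convexity of $f$, and $C$-convexity of $G$; you prove dual attainment independently by Weierstrass on the reduced formulation \eqref{eq:pre_dualpr}, whose feasible set $\{w\in C^+\mid\norm{w}_\ast\leq 1\}$ is compact and whose objective is concave and upper semicontinuous, and then invoke the cited framework only for the equality of optimal values. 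Your route buys robustness — dual attainment does not depend on whether the cited theorem asserts attainment, only on it closing the gap — at the cost of some redundancy, since the theorem the paper uses delivers attainment anyway. A minor further difference in your favor: your Slater point uses an arbitrary $x_0\in\mathcal{X}$ (exploiting that $z$ is a free variable), whereas the paper takes $x^0\in\Int\mathcal{X}$, which is not actually needed for the constraint qualification $G(\dom f)\cap-\Int C\neq\emptyset$.
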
	
\begin{proof}
	Let us fix some $\tilde{x} \in \mathcal{X}$ and define $\tilde{z} := \Gamma(\tilde{x}) - v$. Clearly, $(\tilde{x}, \tilde{z})$ is feasible for \eqref{eq:P2(v)}. We consider the following problem	with compact feasible region in $\R^{n+q}$:
	\begin{equation}\label{eq:P(v, z)}
	\text{minimize } \norm{z} \text{ subject to }  \Gamma(x) - z - v \leq_C 0,\ \norm{z} \leq \norm{\tilde{z}},\ x \in \mathcal{X},\ z\in\R^q. 
	\end{equation}
	An optimal solution $(x^*, z^*)$ for the problem in \eqref{eq:P(v, z)} exists by Weierstrass Theorem and $(x^*, z^*)$ is also optimal for \eqref{eq:P2(v)}. To show the existence of an optimal solution of \eqref{eq:D2(v)}, we show that the following constraint qualification in \cite{khan2016set, lohne2011vector} holds for \eqref{eq:P2(v)}:
	\begin{equation}\label{eq:CQset}
	G(\dom f) \cap -\Int C \neq \emptyset,
	\end{equation}
	where $\dom f :=\{(x,z)\in X\times\R^q\mid f(x,z)<+\infty\}$. Since $\Int\mathcal{X}\neq\emptyset$ and $\Int C\neq\emptyset$ by Assumption \ref{assmp}, we may fix $x^0 \in \Int \mathcal{X}$, $y^0\in \Gamma(x^0)+\Int C$ and define $z^0 := y^0 - v$. We have
	$v + z^0 - \Gamma(x^0) \in \Int C$, equivalently, $G(x^0, z^0) \subseteq -\Int C.$ As	$(x^0, z^0)\in \dom f= \mathcal{X} \times \mathbb{R}^q$, it follows that \eqref{eq:CQset} holds.
	Moreover, the set-valued map $G : X \times \mathbb{R}^q \rightrightarrows \R^q$ is $C$-convex \cite[Section 8.3.2]{khan2016set}, that is, 	
	\begin{equation}\label{setCconvex}
	\lambda G(x^1, z) + (1 - \lambda) G(x^2, z) \subseteq G(\lambda (x^1, z) + (1-\lambda)(x^2, z)) + C
	\end{equation} 
	for every $x^1, x^2 \in X$, $z\in\R^q$, $\lambda \in [0,1]$. Indeed, by the $C$-convexity of $\Gamma\colon X \rightarrow \R^q$, we have
	\begin{equation*}
	\lambda (\Gamma(x^1) - z - v) + (1 - \lambda) (\Gamma(x^2) - z - v) \in \Gamma(\lambda x^1 + (1-\lambda) x^2)  - z - v + C
	\end{equation*}
	for every $x^1, x^2 \in X$, $z\in\R^q$, and $\lambda \in [0,1]$, from which \eqref{setCconvex} follows. Finally, since $f \colon X \times \R^q \rightarrow \overline{\R}$ is also convex, by \cite[Theorem~3.19]{lohne2011vector}, we have strong duality and dual attainment.
\end{proof}	

\begin{notation}
	From now on, we fix an arbitrary optimal solution $(x^v, z^v)$ of \eqref{eq:P2(v)} and an arbitrary optimal solution $w^v$ of \eqref{eq:D2(v)}. Their existence is guaranteed by \Cref{prop:optsol}.
\end{notation}

\begin{remark}\label{rem:wss}
	Note that $(x^v, z^v, w^v)$ is a saddle point of the Lagrangian for \eqref{eq:P2(v)} given by \eqref{eq:lagrangian}; see \cite[Section 5.4.2]{boyd2004convex}. Hence, we have 
	\begin{align}\notag 
	\sup_{w \in \mathbb{R}^q} L(x^v, z^v, w) = L(x^v, z^v, w^v) = \inf_{x \in \mathcal{X}, z \in \mathbb{R}^q} L(x, z, w^v).
	\end{align}
	The second equality yields that $(w^v)^{\mathsf{T}}\Gamma(x^v) = \inf_{x \in \mathcal{X}} (w^v)^{\mathsf{T}}\Gamma(x)$.	Hence, $x^v$ is an optimal solution of \eqref{eq:P1(w)} for $w = w^v$.
\end{remark}

In the next lemma, we characterize the cases where $z^v = 0$.

\begin{lemma}\label{lem:vlemma}
	Suppose that Assumption \ref{assmp} holds. The following statements hold. (a) If $v\notin\mathcal{P}$, then $z^v\neq 0$ and $w^v\neq 0$. (b) If $v\in\bd\mathcal{P}$, then $z^v=0$. (c) If $v\in\Int\mathcal{P}$, then $z^v=0$ and $w^v=0$.
	In particular, $v\in\mathcal{P}$ if and only if $z^v=0$.
\end{lemma}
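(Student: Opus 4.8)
The plan is to reduce everything to a single identity: the common optimal value of \eqref{eq:P2(v)} and \eqref{eq:D2(v)} equals $d(v,\mathcal{P})$. By \Cref{rem:dist} the optimal value of \eqref{eq:P2(v)} is $d(v,\mathcal{P})$, and since $(x^v,z^v)$ is optimal with objective $\norm{z}$, this gives $\norm{z^v}=d(v,\mathcal{P})$. Because $\mathcal{P}$ is closed, $d(v,\mathcal{P})=0$ precisely when $v\in\mathcal{P}$ and $d(v,\mathcal{P})>0$ precisely when $v\notin\mathcal{P}$. Hence $z^v=0\iff\norm{z^v}=0\iff v\in\mathcal{P}$, which settles the final ``in particular'' clause as well as the $z^v$-parts of (a)--(c), using that $\bd\mathcal{P}\subseteq\mathcal{P}$ and $\Int\mathcal{P}\subseteq\mathcal{P}$ since $\mathcal{P}$ is closed.

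For the statements about $w^v$, I would invoke strong duality (\Cref{prop:optsol}): the optimal value of \eqref{eq:D2(v)} is also $d(v,\mathcal{P})$. For any dual-feasible $w$ (that is, $\norm{w}_\ast\leq 1$ and $w\in C^+$), I would rewrite the dual objective from \eqref{eq:pre_dualpr} in geometric form. Using $\mathcal{P}=\Gamma(\mathcal{X})+C$ (\Cref{rem:compact}) together with $\inf_{c\in C}w^{\mathsf{T}}c=0$ for $w\in C^+$, one obtains $\inf_{x\in\mathcal{X}}w^{\mathsf{T}}\Gamma(x)-w^{\mathsf{T}}v=\inf_{y\in\mathcal{P}}w^{\mathsf{T}}(y-v)$, and note that $w=0$ is dual-feasible with objective value $0$. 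For part (a), since $v\notin\mathcal{P}$ the dual optimal value is $d(v,\mathcal{P})>0$, strictly exceeding the value $0$ attained at $w=0$; hence $w^v\neq 0$.

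The main obstacle is the claim $w^v=0$ in part (c), which is the only place that genuinely uses $v\in\Int\mathcal{P}$ rather than mere membership in $\mathcal{P}$. Here I would show that every dual-feasible $w\neq 0$ yields a strictly negative objective $\inf_{y\in\mathcal{P}}w^{\mathsf{T}}(y-v)<0$: since $v\in\Int\mathcal{P}$, for sufficiently small $\lambda>0$ the point $y:=v-\lambda w$ lies in $\mathcal{P}$, and then $w^{\mathsf{T}}(y-v)=-\lambda\,w^{\mathsf{T}}w<0$. As the dual optimal value is $d(v,\mathcal{P})=0$ and is attained at $w=0$, no nonzero feasible $w$ can be optimal, forcing $w^v=0$. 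The crux is thus translating ``interior point'' into a strictly negative value in every direction; everything else is direct substitution into the distance identity and strong duality.
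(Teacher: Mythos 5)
Your proof is correct. It coincides with the paper's argument on most points: the paper also establishes the $z^v$ statements by exhibiting feasible points of \eqref{eq:P2(v)} (which is exactly what \Cref{rem:dist} packages as $\norm{z^v}=d(v,\mathcal{P})$, the identity you lead with), and its proof that $w^v\neq 0$ in part (a) is the same comparison you make between the dual optimal value $d(v,\mathcal{P})>0$ and the value $0$ attained at $w=0$. Where you genuinely diverge is the claim $w^v=0$ in part (c). The paper picks $\delta\in\Int C$ with $v-\delta\in\mathcal{P}$, applies strong duality to $w^v$ to get $\inf_{x\in\mathcal{X}}(w^v)^{\mathsf{T}}(\Gamma(x)-v)=0$, deduces $(w^v)^{\mathsf{T}}\delta=0$, and then invokes the cone-duality fact that $w\in C^+$ and $w^{\mathsf{T}}\delta=0$ with $\delta\in\Int C$ force $w=0$. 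You instead show that \emph{every} nonzero dual-feasible $w$ is strictly suboptimal: since $v\in\Int\mathcal{P}$, the point $v-\lambda w$ lies in $\mathcal{P}$ for small $\lambda>0$, so $\phi(w)=\inf_{y\in\mathcal{P}}w^{\mathsf{T}}(y-v)\leq-\lambda\, w^{\mathsf{T}}w<0$, while the optimal value is $0$. Your route is more elementary in that it uses only the existence of a ball around $v$ inside $\mathcal{P}$, not the pairing between $\Int C$ and $C^+$, and it yields slightly more: $w=0$ is the \emph{unique} dual optimal solution in case (c). The paper's route stays closer to the cone structure and reuses the same strong-duality identity it needs elsewhere (e.g., in \Cref{prop:supp_halfspace}). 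The only step worth making explicit in your write-up is that any dual optimal $w^v$ is automatically feasible, i.e., $w^v\in C^+$ and $\norm{w^v}_\ast\leq 1$; this is immediate because $\phi\equiv-\infty$ off the feasible set of \eqref{eq:D2(v)} while the optimal value is finite, but your argument "no nonzero feasible $w$ can be optimal" quietly relies on it.
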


\begin{proof}
	To prove (a), suppose that $v \notin \mathcal{P}$. To get a contradiction, we assume that $z^v=0$. Since $(x^v, z^v)$ is feasible for \eqref{eq:P2(v)}, we have $v = v + z^v \in \{\Gamma(x^v)\} + C \subseteq \mathcal{P}$, contradicting the supposition. Hence, $z^v\neq 0$. Moreover, if we had $w^v=0$, then the optimal value of \eqref{eq:D2(v)} would be zero and strong duality would imply that $\norm{z^v} = 0$, that is, $z^v = 0$. Therefore, we must have $w^v\neq 0$.
	
	To prove (b) and (c), suppose that $v\in \mathcal{P}$. By \Cref{rem:compact}, there exists $x\in\mathcal{X}$ such that $\Gamma(x)\leq_C v$. Then, $(x,0)$ is feasible for \eqref{eq:P2(v)}. Hence, the optimal value of \eqref{eq:P2(v)} is zero so that $\norm{z^v}=0$, that is, $z^v=0$. Suppose that we further have $v\in\Int\mathcal{P}$. Let $\delta \in \Int C$ be such that $v -\delta \in \mathcal{P}$. By \Cref{rem:compact}, there exists $x^{\delta}\in\mathcal{X}$ such that $\Gamma(x^{\delta})\leq_C v-\delta$, which implies $(w^v)^{\mathsf{T}}\Gamma(x^{\delta})\leq (w^v)^{\mathsf{T}}v-(w^v)^{\mathsf{T}}\delta$. Moreover, by strong duality, $\inf_{x\in \mathcal{X}} (w^v)^{\mathsf{T}}(\Gamma(x)-v)=0$ holds. Combining these gives
$	0=\inf_{x\in \mathcal{X}} (w^v)^{\mathsf{T}}(\Gamma(x)-v)\leq (w^v)^{\mathsf{T}}(\Gamma(x^\delta)-v)\leq -(w^v)^{\mathsf{T}}\delta\leq 0 $
	so that $ (w^v)^{\mathsf{T}} \delta = 0$. As $\delta\in \Int C$ and $w^v \in C^+$, we must have $w^v = 0$.
\end{proof}

The next proposition shows that solving \eqref{eq:P2(v)} when $v\notin \Int \mathcal{P}$ yields a weak minimizer for problem \eqref{eq:P}. 

\begin{proposition}\label{prop:zv0}
	Suppose that \Cref{assmp} holds. If $v \notin \Int \mathcal{P}$, then $x^v$ is a weak minimizer of \eqref{eq:P}, and $y^v := v + z^v \in \wMin_C (\mathcal{P})$.
\end{proposition}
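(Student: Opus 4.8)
The plan is to reduce both claims to a single geometric fact about the upper image. Since $\mathcal{P}=\mathcal{P}+C$ (noted after \Cref{rem:compact}) and $C$ is solid, for every $p\in\mathcal{P}$ the open set $\{p\}+\Int C$ is contained in $\{p\}+C\subseteq\mathcal{P}+C=\mathcal{P}$, hence in $\Int\mathcal{P}$. From this one checks that a point $p\in\mathcal{P}$ is weakly $C$-minimal if and only if $p\in\bd\mathcal{P}$: an interior point fails to be weakly minimal (take $p-t\delta\in\Int\mathcal{P}$ with $\delta\in\Int C$ and $t>0$ small, noting $t\delta\in\Int C$), while a boundary point that were not weakly minimal would satisfy $p\in\{p'\}+\Int C\subseteq\Int\mathcal{P}$ for some $p'\in\mathcal{P}$, a contradiction. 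Granting this characterization, it suffices to prove (i) $y^v\in\bd\mathcal{P}$ and (ii) $\Gamma(x^v)\in\bd\mathcal{P}$, since $\Gamma(x^v)\in\Gamma(\mathcal{X})\subseteq\mathcal{P}$ and weak $C$-minimality in $\mathcal{P}$ descends to the subset $\Gamma(\mathcal{X})$.

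First, feasibility of $(x^v,z^v)$ for \eqref{eq:P2(v)} gives $\Gamma(x^v)\leq_C v+z^v=y^v$, so $y^v\in\{\Gamma(x^v)\}+C\subseteq\mathcal{P}$. To place $y^v$ on $\bd\mathcal{P}$ I use \Cref{lem:vlemma} together with the hypothesis $v\notin\Int\mathcal{P}$, which leaves exactly two cases. If $v\in\bd\mathcal{P}$, then $z^v=0$ by part (b), so $y^v=v\in\bd\mathcal{P}$. If $v\notin\mathcal{P}$, then by \Cref{rem:dist} we have $\norm{y^v-v}=\norm{z^v}=d(v,\mathcal{P})$, so $y^v$ is a nearest point of $\mathcal{P}$ to $v$; such a point cannot lie in $\Int\mathcal{P}$, for otherwise a short step from $y^v$ toward $v$ would remain in $\mathcal{P}$ and strictly decrease the distance. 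Thus $y^v\in\bd\mathcal{P}$ in both cases, which proves (i).

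For (ii), write $c:=y^v-\Gamma(x^v)\in C$. Were $\Gamma(x^v)\in\Int\mathcal{P}$, then $y^v=\Gamma(x^v)+c\in\Int\mathcal{P}+C\subseteq\Int\mathcal{P}$, since $\Int\mathcal{P}+C$ is an open subset of $\mathcal{P}$; this contradicts (i), so $\Gamma(x^v)\in\bd\mathcal{P}$. Applying the boundary characterization of the first paragraph to $y^v$ and to $\Gamma(x^v)$ gives $y^v\in\wMin_C(\mathcal{P})$ and $\Gamma(x^v)\in\wMin_C(\mathcal{P})$; the latter yields $\Gamma(x^v)\in\wMin_C(\Gamma(\mathcal{X}))$, i.e.\ $x^v$ is a weak minimizer of \eqref{eq:P}.

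In the exterior case $v\notin\mathcal{P}$ one can instead prove that $x^v$ is a weak minimizer through duality: \Cref{lem:vlemma}(a) gives $w^v\neq0$, hence $w^v\in C^+\setminus\{0\}$, and by \Cref{rem:wss} $x^v$ solves \eqref{eq:P1(w)} with $w=w^v$, so \Cref{prop:jahn2009vector} applies directly. The main obstacle is precisely the boundary case $v\in\bd\mathcal{P}$: there $z^v=0$ and the dual optimizer $w^v$ may equal $0$, so \Cref{rem:wss} and \Cref{prop:jahn2009vector} carry no information, and one must fall back on the purely geometric boundary argument above. Ensuring that this argument certifies weak minimality of \emph{both} $y^v$ and $\Gamma(x^v)$ (and not merely their membership in $\mathcal{P}$) is the delicate point of the proof.
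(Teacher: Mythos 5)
Your proof is correct, and for the weak-minimizer claim it takes a genuinely different route from the paper's. The paper splits on the hypothesis: for $v\in\bd\mathcal{P}$ it uses $z^v=0$ and the fact that weak minimality descends along $\leq_C$; for $v\notin\mathcal{P}$ it goes through duality --- \Cref{lem:vlemma}(a) gives $w^v\neq 0$, \Cref{rem:wss} shows $x^v$ solves \eqref{eq:P1(w)} for $w=w^v$, and \Cref{prop:jahn2009vector} concludes. You bypass the dual solution $w^v$ altogether: you place $y^v$ on $\bd\mathcal{P}$ (trivially when $v\in\bd\mathcal{P}$, and by the nearest-point argument when $v\notin\mathcal{P}$, which is in substance the same perturbation the paper uses for its own $y^v$ claim), then transfer boundary membership to $\Gamma(x^v)$ via $\Int\mathcal{P}+C\subseteq\Int\mathcal{P}$, so that both conclusions drop out of the single identification $\wMin_C(\mathcal{P})=\bd\mathcal{P}$ --- which you prove from first principles rather than citing \cite{lohne2011vector} as the paper does. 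Your route is more elementary (no strong duality, no weighted-sum scalarization), and it is explicit about two points the paper leaves implicit: the paper's perturbation divides by $\norm{z^v}$ and so tacitly assumes $z^v\neq 0$ (your case $v\in\bd\mathcal{P}$ covers exactly that degenerate situation), and the passage from $\Gamma(x^v)\in\wMin_C(\mathcal{P})$ to weak minimality in the sense of \Cref{defn:minimizer} requires $\Gamma(\mathcal{X})\subseteq\mathcal{P}$, which you state. What the paper's duality detour buys is reuse: the same mechanism ($w^v\neq 0$ together with \Cref{rem:wss}) underlies \Cref{prop:supp_halfspace}, the supporting-halfspace construction the algorithm actually requires, whereas your geometric argument, while cleaner for this proposition, produces no supporting hyperplane.
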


\begin{proof}
	As $\mathcal{X}$ is nonempty and compact, we have $\mathcal{P} \neq \emptyset$ and $\mathcal{P} \neq \mathbb{R}^q$. By \cite[Definition~1.45 and Corollary~1.48~(iv)]{lohne2011vector}, we have $\wMin_C(\mathcal{P}) = \bd \mathcal{P}$. First, suppose that $v \in \bd \mathcal{P}$. Then, $z^v = 0$ by \Cref{lem:vlemma}. Together with primal feasibility, this implies 
	$\Gamma(x^v) \leq_C v$. As $v \in \wMin_C(\mathcal{P})$, by the definition of weakly $C$-minimal element, we have $\Gamma(x^v) \in \wMin_C(\mathcal{P})$. Hence, $x^v$ is a weak minimizer of \eqref{eq:P} in this case. Next, suppose that $v \notin \mathcal{P}$. Then, $w^v \neq 0$ by \Cref{lem:vlemma}. 
	By \Cref{rem:wss}, $x^v$ is an optimal solution of \eqref{eq:P1(w)} for $w = w^v \in C^+ \setminus \{0\}$. Hence, by \Cref{prop:jahn2009vector}, $x^v$ is a weak minimizer of \eqref{eq:P}. 
	
	Since $(x^v,z^v)$ is feasible for \eqref{eq:P2(v)}, $y^v \in \mathcal{P}$ holds. To get a contradiction, assume that $y^v \notin \wMin_C (\mathcal{P})$; hence, $y^v = v + z^v  \in \Int \mathcal{P}$. Then, there exists $\epsilon > 0$ such that
	$v + z^v - \epsilon\frac{z^v}{\norm{z^v}} \in \mathcal{P}$, which implies the existence of $\bar{x} \in \mathcal{X}$ with $ v + z^v - \epsilon\frac{z^v}{\norm{z^v}}  \in \{\Gamma(\bar{x})\} + C$. Let $\bar{z} := (\norm{z^v} - \epsilon) \frac{z^v}{\norm{z^v}}$. Then, $(\bar{x}, \bar{z})$ is feasible for \eqref{eq:P2(v)}. This is a contradiction as $\norm{\bar{z}} < \norm{z^v}$.
\end{proof}		

The following result shows that a supporting hyperplane of $\mathcal{P}$ at $y^v = v +z^v$ can be found by using a dual optimal solution $w^v$. 

\begin{proposition}\label{prop:supp_halfspace}
	Suppose that Assumption \ref{assmp} holds and $w^v \neq 0$. Then, the halfspace 
	$$\mathcal{H} = \{y \in \mathbb{R}^q \mid (w^v)^{\mathsf{T}}y \geq (w^v)^{\mathsf{T}} \Gamma(x^v) \}$$ contains the upper image $\mathcal{P}$. Moreover, $\bd \mathcal{H}$ is a supporting hyperplane of $\mathcal{P}$ both at $\Gamma(x^v)$ and $y^v = v +z^v$. In particular, $(w^v)^{\mathsf{T}} \Gamma(x^v) = (w^v)^{\mathsf{T}}y^v$.
\end{proposition}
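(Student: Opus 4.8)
The plan is to first record the two pieces of dual feasibility of $w^v$. Since the dual objective $\phi$ equals $-\infty$ outside $C^+$ and the dual value is finite, any optimal $w^v$ satisfies $w^v\in C^+$; moreover, the reformulation \eqref{eq:pre_dualpr} of the dual value as a supremum over $\{w\mid \norm{w}_\ast\leq 1,\ w\in C^+\}$ shows $\norm{w^v}_\ast\leq 1$. To prove $\mathcal{P}\subseteq\mathcal{H}$, I would take an arbitrary $y\in\mathcal{P}$ and use \Cref{rem:compact} to write $y=\Gamma(x)+c$ with $x\in\mathcal{X}$ and $c\in C$. Then $(w^v)^{\mathsf{T}}y=(w^v)^{\mathsf{T}}\Gamma(x)+(w^v)^{\mathsf{T}}c\geq(w^v)^{\mathsf{T}}\Gamma(x^v)$, where the inequality combines $(w^v)^{\mathsf{T}}c\geq 0$ (as $w^v\in C^+$, $c\in C$) with the weighted-sum optimality $(w^v)^{\mathsf{T}}\Gamma(x^v)=\inf_{x'\in\mathcal{X}}(w^v)^{\mathsf{T}}\Gamma(x')$ furnished by \Cref{rem:wss}. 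Hence $y\in\mathcal{H}$, giving $\mathcal{P}\subseteq\mathcal{H}$.

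Next I would show that $\bd\mathcal{H}$ supports $\mathcal{P}$ at $\Gamma(x^v)$. Since $\Gamma(x^v)\in\Gamma(\mathcal{X})\subseteq\mathcal{P}$ and $\mathcal{P}\subseteq\mathcal{H}$, we have $\inf_{z\in\mathcal{P}}(w^v)^{\mathsf{T}}z\geq(w^v)^{\mathsf{T}}\Gamma(x^v)$, while membership $\Gamma(x^v)\in\mathcal{P}$ gives the reverse inequality. Thus $(w^v)^{\mathsf{T}}\Gamma(x^v)=\inf_{z\in\mathcal{P}}(w^v)^{\mathsf{T}}z$, which, together with $w^v\neq 0$, is exactly the definition of $\bd\mathcal{H}$ being a supporting hyperplane of $\mathcal{P}$ at $\Gamma(x^v)$.

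The crux, and the step I expect to be the main obstacle, is the identity $(w^v)^{\mathsf{T}}y^v=(w^v)^{\mathsf{T}}\Gamma(x^v)$, equivalently $(w^v)^{\mathsf{T}}(\Gamma(x^v)-z^v-v)=0$. This is a complementary-slackness relation that cannot be obtained from feasibility and cone membership alone, since those only yield the single inequality $(w^v)^{\mathsf{T}}(y^v-\Gamma(x^v))\geq 0$; it genuinely requires strong duality. I would extract it from the saddle-point property in \Cref{rem:wss}: evaluating the Lagrangian \eqref{eq:lagrangian} at $(x^v,z^v,w^v)$ with $w^v\in C^+$ gives $L(x^v,z^v,w^v)=\norm{z^v}+(w^v)^{\mathsf{T}}(\Gamma(x^v)-z^v-v)$, whereas the saddle-point value equals the common primal/dual optimal value $\norm{z^v}$ by \Cref{prop:optsol}; comparing the two forces $(w^v)^{\mathsf{T}}(\Gamma(x^v)-z^v-v)=0$. (Alternatively, computing the dual value directly from \eqref{eq:pre_dualpr} and using $\norm{w^v}_\ast\leq 1$ yields $(w^v)^{\mathsf{T}}z^v=\norm{z^v}$, which gives the same conclusion.) Finally, feasibility of $(x^v,z^v)$ gives $y^v=v+z^v\in\Gamma(\mathcal{X})+C=\mathcal{P}$, and the just-proved equality places $y^v$ on $\bd\mathcal{H}$ with $(w^v)^{\mathsf{T}}y^v=(w^v)^{\mathsf{T}}\Gamma(x^v)=\inf_{z\in\mathcal{P}}(w^v)^{\mathsf{T}}z$. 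Therefore $\bd\mathcal{H}$ also supports $\mathcal{P}$ at $y^v$, and the displayed ``in particular'' equality follows, completing the proof.
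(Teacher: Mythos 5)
Your proof is correct, and it reaches the conclusion by a genuinely different route than the paper. The paper never invokes \Cref{rem:wss}; instead, for an arbitrary $y\in\mathcal{P}$ it considers the scalarizations (P$(y)$) and (D$(y)$), notes that $w^v$ is dual-feasible for (D$(y)$) and that $(\bar x,0)$ is primal-feasible (where $\Gamma(\bar x)\leq_C y$), and combines \emph{weak} duality for that family of problems with \emph{strong} duality for \eqref{eq:P2(v)}--\eqref{eq:D2(v)} to get $(w^v)^{\mathsf{T}}y\geq \inf_{x\in\mathcal{X}}(w^v)^{\mathsf{T}}\Gamma(x)=\norm{z^v}+(w^v)^{\mathsf{T}}v\geq (w^v)^{\mathsf{T}}y^v$, the last step using $\norm{w^v}_\ast\leq 1$. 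It then obtains the identity $(w^v)^{\mathsf{T}}y^v=(w^v)^{\mathsf{T}}\Gamma(x^v)$ by a sandwich: $\geq$ because $\Gamma(x^v)\in\mathcal{P}$ is itself subject to the bound just derived, and $\leq$ because $\Gamma(x^v)\leq_C y^v$ with $w^v\in C^+$; only at that point do containment in $\mathcal{H}$ and the two supporting statements drop out simultaneously. You instead modularize: containment $\mathcal{P}\subseteq\mathcal{H}$ follows at once from the weighted-sum optimality of $x^v$ (\Cref{rem:wss}) plus $w^v\in C^+$, and the identity is extracted as an explicit complementary-slackness relation by evaluating the Lagrangian \eqref{eq:lagrangian} at the saddle point and comparing with the common optimal value $\norm{z^v}$ (your alternative via \eqref{eq:pre_dualpr} closes the same inequality chain). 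Both arguments rest on \Cref{prop:optsol}; yours makes transparent which ingredient yields which conclusion and reuses machinery the paper has already established in \Cref{rem:wss}, while the paper's single inequality chain establishes the slightly stronger ordering $(w^v)^{\mathsf{T}}y\geq(w^v)^{\mathsf{T}}y^v$ for all $y\in\mathcal{P}$ first (i.e., support at $y^v$ before support at $\Gamma(x^v)$) without ever touching the Lagrangian directly.
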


\begin{proof}
	We clearly have $\Gamma(x^v) \in \bd \mathcal{P} \cap \mathcal{H}$ and $y^v \in \mathcal{P}\cap \mathcal{H}$. 
	Let $y \in \mathcal{P}$ be arbitrary and $\bar{x}\in \mathcal{X}$ be such that $\Gamma(\bar{x}) \leq_C y$. Consider the problems (P$(y)$) and (D$(y)$). Clearly, $(\bar{x}, 0)$ is feasible for (P$(y)$). Moreover, the optimal solution $w^v$ of \eqref{eq:D2(v)} is feasible for (D$(y)$). Using weak duality for (P$(y)$) and (D$(y)$), we obtain $0 \geq \inf_{x \in \mathcal{X}} (w^v)^{\mathsf{T}}\Gamma(x) - (w^v)^{\mathsf{T}}y$. Moreover, from strong duality for \eqref{eq:P2(v)} and \eqref{eq:D2(v)}, we have $\norm{z^v} = \inf_{x \in \mathcal{X}} (w^v)^{\mathsf{T}}\Gamma(x) - (w^v)^{\mathsf{T}}v.$ Hence,
	\[
	(w^v)^{\mathsf{T}}y \geq  \inf_{x \in \mathcal{X}} (w^v)^{\mathsf{T}}\Gamma(x) = \norm{z^v} + (w^v)^{\mathsf{T}}v.
	\]
	Note that $\norm{z^v} \geq (w^v)^{\mathsf{T}}z^v$ holds as $\norm{w^v}_* \leq  1$ by dual feasibility. Then, we obtain 
$	(w^v)^{\mathsf{T}}y  \geq (w^v)^{\mathsf{T}} y^v$. 
	In particular, we have $(w^v)^{\mathsf{T}}\Gamma(x^v) \geq (w^v)^{\mathsf{T}} y^v$ as $\Gamma(x^v) \in \mathcal{P}$. On the other hand, since $\Gamma (x^v) \leq_C y^v$ and $w^v \in C^+$, we also have $(w^v)^{\mathsf{T}}\Gamma (x^v) \leq (w^v)^{\mathsf{T}}y^v$. The equality $(w^v)^{\mathsf{T}}y^v= (w^v)^{\mathsf{T}} \Gamma(x^v)$ completes the proof as it implies $y\in \mathcal{H}$ (hence $\mathcal{P} \subseteq \mathcal{H}$) as well as $y^v \in \bd \mathcal{H}$.	
\end{proof}


\Cref{prop:supp_halfspace} provides a method to generate a supporting halfspace of $\mathcal{P}$ at $\Gamma(x^v)$ in which one uses an arbitrary dual optimal solution $w^v$. The next result shows that if the norm in \eqref{eq:P2(v)} is taken as the $\ell^p$-norm for some $p\in[1,+\infty)$, e.g., the Euclidean norm, then it is possible to generate a supporting halfspace to $\mathcal{P}$ at $\Gamma(x^v)$ using $z^v$ instead of $w^v$.

\begin{corollary}\label{prop:lpnorm}
	Suppose that \Cref{assmp} holds and $\norm{\cdot}=\norm{\cdot}_p$ for some $p\in [1,+\infty)$. Assume that $v \notin \mathcal{P}$. Then, the halfspace 
	\[
	\mathcal{H} = \Big\{y \in \mathbb{R}^q \mid \sum_{i = 1}^{q}\sgn(z_i^v)\lvert z_i^v\lvert^{p-1}y_i \geq \sum_{i = 1}^{q}\sgn(z_i^v)\lvert z_i^v\lvert^{p-1} \Gamma_i(x^v)\Big\}
	\]
	contains the upper image $\mathcal{P}$, where $\sgn$ is the usual sign function. Moreover, $\bd \mathcal{H}$ is a supporting hyperplane of
	$\mathcal{P}$ both at $\Gamma(x^v)$ and $y^v = v +z^v$.
\end{corollary}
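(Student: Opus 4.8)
The plan is to deduce the corollary from \Cref{prop:supp_halfspace} by showing that the normal vector defining $\mathcal{H}$, namely $\tilde{w}:=(\sgn(z_1^v)\abs{z_1^v}^{p-1},\ldots,\sgn(z_q^v)\abs{z_q^v}^{p-1})$, is a strictly positive scalar multiple of the dual optimal solution $w^v$. Since scaling the normal of a halfspace by a positive constant changes neither the halfspace nor its bounding hyperplane, once $w^v=c\,\tilde{w}$ with $c>0$ is established, the halfspace $\mathcal{H}$ coincides with the one produced by \Cref{prop:supp_halfspace}, and both conclusions (that $\mathcal{H}\supseteq\mathcal{P}$ and that $\bd\mathcal{H}$ supports $\mathcal{P}$ at $\Gamma(x^v)$ and at $y^v=v+z^v$) follow at once. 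Note first that \Cref{prop:supp_halfspace} is applicable: since $v\notin\mathcal{P}$, \Cref{lem:vlemma}(a) yields $z^v\neq 0$ and $w^v\neq 0$, so in particular $\tilde{w}\neq 0$ as well.

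The heart of the argument is the proportionality $w^v=c\,\tilde{w}$. For this I would start from the optimality relations already used inside the proof of \Cref{prop:supp_halfspace}: combining strong duality with the identity $(w^v)^{\mathsf{T}}\Gamma(x^v)=(w^v)^{\mathsf{T}}y^v$ gives $\norm{z^v}=(w^v)^{\mathsf{T}}z^v$, while dual feasibility gives $\norm{w^v}_\ast=\norm{w^v}_{p^\prime}\leq 1$, where $p^\prime$ is the conjugate exponent. Chaining these through Hölder's inequality produces $\norm{z^v}_p=(w^v)^{\mathsf{T}}z^v\leq\norm{w^v}_{p^\prime}\norm{z^v}_p\leq\norm{z^v}_p$, so every inequality is in fact an equality. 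Equality in $(w^v)^{\mathsf{T}}z^v=\sum_i\abs{w_i^v}\abs{z_i^v}$ forces $w_i^v$ and $z_i^v$ to share the same sign, and the equality case of Hölder's inequality forces $\abs{w_i^v}^{p^\prime}$ to be proportional to $\abs{z_i^v}^{p}$ across all $i$; since $p/p^\prime=p-1$, this gives $\abs{w_i^v}=c\,\abs{z_i^v}^{p-1}$ for a common $c>0$ (positive because $w^v\neq 0$), and hence $w_i^v=c\,\sgn(z_i^v)\abs{z_i^v}^{p-1}=c\,\tilde{w}_i$ for every $i$. Equivalently, one can invoke the saddle-point property of \Cref{rem:wss}: since $z^v$ minimizes $z\mapsto\norm{z}_p-(w^v)^{\mathsf{T}}z$, we have $w^v\in\partial\norm{z^v}_p$, and for $p\in(1,\infty)$ the $\ell^p$-norm is differentiable at the nonzero point $z^v$ with $\nabla\norm{z^v}_p=\norm{z^v}_p^{1-p}\tilde{w}$, yielding the same proportionality with $c=\norm{z^v}_p^{p-1}$.

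The main obstacle is exactly this proportionality step, and it is genuinely delicate at the endpoint $p=1$: there the $\ell^p$-norm fails to be differentiable and, correspondingly, the equality case of Hölder's inequality (now with $p^\prime=\infty$) no longer determines the components of $w^v$ at indices where $z_i^v=0$. Consequently, for $p=1$ the vector $\tilde{w}$ generated by the formula (which zeroes out those components since $\sgn(0)=0$) need not equal a positive multiple of $w^v$, and I would expect to treat this boundary case by a separate argument or under an additional nondegeneracy assumption on $z^v$. For every $p\in(1,\infty)$, however, either of the two routes above closes the proof cleanly, after which the desired containment and supporting-hyperplane statements are inherited verbatim from \Cref{prop:supp_halfspace}.
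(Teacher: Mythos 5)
For $p\in(1,+\infty)$ your proposal is correct and is essentially the paper's own proof: the paper likewise derives $w^v\in\partial\norm{z^v}_p$ from the first-order condition for the Lagrangian (your route (b), via \Cref{rem:wss}) and then reads off $w^v_i=\big(\sum_{i=1}^q\abs{z^v_i}^p\big)^{(1-p)/p}\abs{z^v_i}^{p-1}\sgn(z^v_i)$ by the chain rule for subdifferentials --- exactly your proportionality $w^v=c\,\tilde w$ with $c=\norm{z^v}_p^{1-p}>0$ --- after which both conclusions are quoted from \Cref{prop:supp_halfspace}. Your route (a), strong duality combined with the equality case of H\"older's inequality, is a correct self-contained alternative to that chain-rule computation, but it buys nothing essentially new.

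Your reservation about $p=1$ is not a cosmetic loose end: it pinpoints a genuine gap in the paper's own proof, and in fact the statement as printed is false for $p=1$. The paper's step ``if $z_i^v = 0$, then for each $s_i \in S_i$, we have $\abs{z^v_i}^{p-1}s_i=0$'' uses $\abs{0}^{p-1}=0$, which requires $p>1$; for $p=1$ one has $\partial\norm{z^v}_1=S_1\times\dots\times S_q$, so the components of $w^v$ at indices with $z^v_i=0$ may be anywhere in $[-1,1]$ and need not vanish, which is precisely the obstruction you describe. A concrete counterexample satisfying \Cref{assmp}: take $n=1$, $\mathcal{X}=[-1,1]$, $\Gamma(x)=(x,\,x^2-2x)^{\mathsf{T}}$, $C=\R^2_+$, $\norm{\cdot}=\norm{\cdot}_1$, and $v=(-\tfrac{1}{10},0)^{\mathsf{T}}\notin\mathcal{P}$. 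Minimizing over $z$ first, the objective of \eqref{eq:P2(v)} as a function of $x$ is $(x+\tfrac{1}{10})^++(x^2-2x)^+$, whose unique minimizer on $[-1,1]$ is $x^v=0$ with $z^v=(\tfrac{1}{10},0)^{\mathsf{T}}$; hence $\Gamma(x^v)=(0,0)^{\mathsf{T}}$, the corollary's normal vector is $\tilde w=(1,0)^{\mathsf{T}}$, and its halfspace is $\mathcal{H}=\{y\in\R^2\mid y_1\geq 0\}$. But $\Gamma(-1)=(-1,3)^{\mathsf{T}}\in\mathcal{P}\setminus\mathcal{H}$, so $\mathcal{H}$ neither contains $\mathcal{P}$ nor supports it at $y^v=(0,0)^{\mathsf{T}}$. (The actual dual optimum is $w^v=(1,\tfrac{1}{2})^{\mathsf{T}}$, whose halfspace from \Cref{prop:supp_halfspace} does contain $\mathcal{P}$; the failure is exactly that $\tilde w$ is not a positive multiple of $w^v$ because the formula zeroes out the second component.) So the correct dichotomy is the one you anticipated: for $p\in(1,+\infty)$ your argument (and the paper's) is complete, while for $p=1$ the conclusion survives only under the nondegeneracy condition $z^v_i\neq 0$ for all $i$, in which case $\partial\norm{z^v}_1$ is the singleton $\{\tilde w\}$ and the same reduction to \Cref{prop:supp_halfspace} applies.
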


\begin{proof}
	Consider \eqref{eq:P2(v)} and its Lagrange dual \eqref{eq:D2(v)}. Let us define $g(z):=\norm{z}_p-(w^v)^{\mathsf{T}}z$, $z\in\R^q$. The arbitrarily fixed dual optimal solution $w^v$ satisfies the first order condition with respect to $z$, that is,
	$	0 \in  \partial g(z^v)$. By the chain rule for subdifferentials, this is equivalent to
	\begin{align}\label{eq:subdifnorm}
	w^v \in \Big(\sum_{i=1}^q\lvert z^v_i\rvert^p\Big)^{\frac{1-p}{p}} \big(\lvert z^v_1\rvert^{p - 1}S_1 \times \dots \times \lvert z^v_q\rvert^{p - 1}S_q\big),
	\end{align}
	where, for each $i\in \{1,\ldots,q\}$, $S_i$ denotes the subdifferential of the absolute value function at $z^v_i$.
	Let $i\in\{1,\ldots,q\}$. Note that if $z^v_i \neq 0$, then we have $S_i= \{\sgn(z^v_i)\}$. On the other hand, if $z^v_i = 0$, then for each $s_i \in S_i$, we have $\lvert z^v_i\rvert^{p-1} s_i = 0$. Hence, by \eqref{eq:subdifnorm},
$	w^v_i = \Big(\sum_{i=1}^q\lvert z^v_i\rvert^p\Big)^{\frac{1-p}{p}} \lvert z_i^v\rvert^{p - 1}\sgn(z_i^v)$.
	The assertion follows from \Cref{prop:supp_halfspace}. 
\end{proof}


\section{The algorithm}\label{sec:alg}

We propose an outer approximation algorithm for finding a finite weak $\epsilon$-solution to CVOP as in \Cref{defn:finite epsilon-solution}. The algorithm is based on solving norm minimization scalarizations iteratively. The design of the algorithm is similar to the ``Benson-type algorithms" in the literature; see, for instance, \cite{benson1998outer, ehrgott2011approximation, lohne2014primal}. It starts by finding a polyhedral outer approximation $\mathcal{P}^{\out}_0$ of $\mathcal{P}$ and iterates in order to form a sequence $\mathcal{P}^{\out}_0 \supseteq \mathcal{P}^{\out}_1 \supseteq \ldots \supseteq \mathcal{P}$ of finer approximating sets.

Before providing the details of the algorithm, we impose a further assumption on $C$.
\begin{assumption} \label{assmp:C_poly}
	The ordering cone $C$ is polyhedral. 
\end{assumption}

\Cref{assmp:C_poly} implies that the dual cone $C^+$ is polyhedral. We denote the set of generating vectors of $C^+$ by $\{w^1,\ldots,w^J\}$, where $J\in\N$, i.e., $C^+ = \conv \cone \{w^1,\ldots,w^J\}$. 
Moreover, under \Cref{assmp}, $C^+$ is solid since $C$ is pointed. 
Hence, $J \geq q$.

The algorithm starts by solving the weighted sum scalarizations (WS$(w^1)$), \ldots, (WS$(w^J)$). For each $j\in \{1,\ldots, J\}$, the existence of an optimal solution $x^j \in \mathcal{X}$ of (WS$(w^j)$) is guaranteed by \Cref{assmp} (b, c). The initial set of weak minimizers is set as ${\mathcal{X}}_0 := \{x^1,\ldots,x^J\}$, see \Cref{prop:jahn2009vector}.\footnote{\rev{Alternatively, one may start with $\mathcal{X}_0=\emptyset$ in line 2 of \Cref{alg}. This would decrease $|\mathcal{X}|$ by $J$, the number of generating vectors $C$.}} The set $\mathcal{V}^{\text{known}}$, which keeps the set of all points $v\in\R^q$ for which \eqref{eq:P2(v)} and \eqref{eq:D2(v)} are solved throughout the algorithm, is initialized as the empty set. Moreover, similar to the primal algorithm in \cite{lohne2014primal}, the initial outer approximation is set as 
\begin{align}\label{eq:P_0}
\mathcal{P}^{\out}_0 := \bigcap_{j=1}^J \{y \in \mathbb{R}^q \mid (w^j)^{\mathsf{T}}y \geq (w^j)^{\mathsf{T}} \Gamma(x^j) \} 
\end{align}
(see lines 1-3 of \Cref{alg}). It is not difficult to see that $\mathcal{P}^{\out}_0 \supseteq \mathcal{P}$. Indeed, for each $\bar{y}\in \mathcal{P}$, there exists $\bar{x}\in \mathcal{X}$ such that $\Gamma(\bar{x})\leq_C \bar{y}$. Then, for each $j\in \{1,\ldots, J\}$, we have $(w^j)^{\mathsf{T}}\Gamma(\bar{x}) \leq (w^j)^{\mathsf{T}} \bar{y}$ which implies $(w^j)^{\mathsf{T}} \bar{y}\geq \inf_{x\in \mathcal{X}}(w^j)^{\mathsf{T}} \Gamma(x) = (w^j)^{\mathsf{T}} \Gamma(x^j)$ so that $\bar{y}\in\mathcal{P}^{\out}_0$. Moreover, as $C$ is pointed and \eqref{eq:P} is bounded, $\mathcal{P}^{\out}_0$ has at least one vertex,  see~\cite[Corollary 18.5.3]{rockafellar1970convex} (as well as \cite[Section 4.1]{lohne2014primal}).

At an arbitrary iteration $k\geq 0$ of the algorithm, the set $\mathcal{V}_k$ of vertices of the current outer approximation $\mathcal{P}^{\out}_{k}$ is computed first (line 6).\footnote{This is done by solving a vertex enumeration problem for $\mathcal{P}^{\out}_k$, that is, from the $H$-representation of $\mathcal{P}^{\out}_k$, its $V$-representation is computed. For the computational tests of~\Cref{sec:experiments}, we use \emph{bensolve tools} for this purpose \cite{lohne2017vector}.} Then, for each $v\in\mathcal{V}_{k}$, if not done before, the norm-minimizing scalarization \eqref{eq:P2(v)} and its dual \eqref{eq:D2(v)} are solved in order to find optimal solutions ($x^v$,$z^v$) and $w^v$, respectively \rev{(see \Cref{prop:optsol})}.\footnote{Note that many solvers yield both primal and dual optimal solutions when called only for one of the problems.} Moreover, $v$ is added to $\mathcal{V}^{\text{known}}$ (lines 7-10). If the distance $d(v,\mathcal{P})=\norm{z^v}$ is less than or equal to the predetermined approximation error $\epsilon > 0$, then $x^v$ is added to the set of weak minimizers \rev{(see \Cref{prop:zv0})}\footnote{Since the solution $x^v$ found in line 9 of \Cref{alg} is a weak minimizer, it is also possible to update the set of weak minimizers right after line 9 (without checking the value of $\norm{z^v}$) \rev{and subsequently ignore lines 13 and 18}. This would yield a finite weak $\epsilon$-solution with an increased cardinality.} and the algorithm continues by considering the remaining vertices of $\mathcal{P}_{k}$ (line 18). Otherwise, the supporting halfspace \begin{equation}\label{eq:Hk}
\mathcal{H}_k:=\{y \in \mathbb{R}^q \mid (w^v)^{\mathsf{T}}y \geq (w^v)^{\mathsf{T}} \Gamma(x^v) \}\end{equation}
of $\mathcal{P}$ at $\Gamma(x^v)$ is found (see \Cref{prop:supp_halfspace}); and the current approximation is updated as $\mathcal{P}^\out_{k+1} = \mathcal{P}^\out_{k} \cap \mathcal{H}_k$ (lines 12). The algorithm terminates if all the vertices in $\mathcal{V}_k$ are within $\epsilon$ distance to the upper image (lines 5, 15, 16, 22).

\begin{algorithm}[th]
	\caption{Outer Approximation Algorithm for \eqref{eq:P}}	
	\algsetup{
		linenosize=\small,
		linenodelimiter=.
	}
	\label{alg}
	\begin{algorithmic}[1]
		\STATE Compute an optimal solution $x^j$ of (WS$(w^j)$) for each $j \in \{1,\ldots,J\}$;
		\STATE Set $k = 0, \mathcal{{X}}_0 = \{x^1,\dots,x^J\}, \mathcal{V}^{\text{known}} = \emptyset$;
		\STATE Store an $H$-representation of $\mathcal{P}^{\out}_0$ according to \eqref{eq:P_0};
		\REPEAT
		\STATE Stop $\gets \TRUE$;
		\STATE Compute the set $\mathcal{V}_k$ of vertices of $\mathcal{P}^\out_k$ from its $H$-representation;
		\FOR{$v \in \mathcal{V}_k$}
		\IF{$v \notin \mathcal{V}^{\text{known}}$}
		\STATE Solve \eqref{eq:P2(v)} and \eqref{eq:D2(v)} to compute $(x^v,z^v)$ and $w^v$;
		\STATE $\mathcal{V}^{\text{known}} \gets \mathcal{V}^{\text{known}} \cup \{v\}$;
		\IF{$\norm{z^v} > \epsilon$}
		\STATE $\mathcal{P}^\out_{k+1} = \mathcal{P}^\out_k \cap \mathcal{H}_k$;
		\STATE $\mathcal{X}_{k+1} = \mathcal{X}_k$;
		\STATE $k \leftarrow k + 1$;
		\STATE Stop $\gets \FALSE$;
		\STATE break; 
		\ELSE
		\STATE ${\mathcal{X}}_{k} \gets {\mathcal{X}}_{k}\cup \{x^v\}$; 
		\ENDIF
		\ENDIF
		\ENDFOR
		\UNTIL{Stop}
		\RETURN		 
		$ \begin{cases} 
		\mathcal{{X}}_k &: \text{A finite weak $\epsilon$-solution to \eqref{eq:P}}; \\
		\mathcal{P}^\out_k &: \text{An outer approximation of~} \mathcal{P}.\\
		\end{cases}$
	\end{algorithmic}
\end{algorithm}

By the design of the algorithm, for each iteration $k\geq 0$, the set $\mathcal{P}^\out_k \supseteq \mathcal{P}$ is an outer approximation of the upper image; similarly, we define an inner approximation of $\mathcal{P}$ by
\begin{align} \label{eq:innerPk}
\mathcal{P}^{\inn}_k := \conv \Gamma(\mathcal{{X}}_k) + C \subseteq \mathcal{P}. 
\end{align}
Now, we present two lemmas regarding these inner and outer approximations. The first one shows that, for each $k\geq 0$, the sets $\mathcal{P}^\out_k$ and $\mathcal{P}^\inn_k$ have the same recession cone, which is the ordering cone $C$. With the second lemma, we see that, in order to compute the Hausdorff distance between $\mathcal{P}^\out_k$ and $\mathcal{P}$ (or $\mathcal{P}^\inn_k$), it is sufficient to consider the vertices $\mathcal{V}_k$ of $\mathcal{P}^{\out}_k$.

\begin{lemma}\label{lem:recession_directions}
	Suppose that Assumptions~\ref{assmp} and~\ref{assmp:C_poly} hold. Let $k \geq 0$. Then,
	\begin{align}\notag 
	\recc\mathcal{P}^\out_k = \recc\mathcal{P}^\inn_k = \recc\mathcal{P} = C.
	\end{align}
\end{lemma}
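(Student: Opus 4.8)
The plan is to treat the three sets one at a time, showing each recession cone equals $C$, and to lean throughout on the elementary fact that for a nonempty compact $K\subseteq\R^q$ and a closed convex cone $C$ with $K+C$ closed, one has $\recc(K+C)=C$. The inclusion $C\subseteq\recc(K+C)$ is immediate since $(K+C)+C=K+C$; the reverse is the technical point and is where I would spend the effort.

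For $\mathcal{P}$ itself I would use \Cref{rem:compact} to write $\mathcal{P}=\Gamma(\mathcal{X})+C$ with $\Gamma(\mathcal{X})$ compact. To prove $\recc\mathcal{P}\subseteq C$, take $d\in\recc\mathcal{P}$, fix $a\in\mathcal{P}$, and for each $\lambda>0$ write $a+\lambda d=k_\lambda+c_\lambda$ with $k_\lambda\in\Gamma(\mathcal{X})$ and $c_\lambda\in C$. Then $d=(k_\lambda-a)/\lambda+c_\lambda/\lambda$; since $\Gamma(\mathcal{X})$ is bounded, $(k_\lambda-a)/\lambda\to 0$ as $\lambda\to\infty$, so $c_\lambda/\lambda\to d$. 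As $c_\lambda/\lambda\in C$ (a cone) and $C$ is closed, $d\in C$, giving $\recc\mathcal{P}=C$. For $\mathcal{P}^{\inn}_k=\conv\Gamma(\mathcal{{X}}_k)+C$ the same argument applies verbatim: $\mathcal{X}_k$ is a nonempty finite set, so $\conv\Gamma(\mathcal{{X}}_k)$ is a nonempty compact polytope, $\mathcal{P}^{\inn}_k$ is closed and convex, and $\recc\mathcal{P}^{\inn}_k=C$.

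For the outer approximation $\mathcal{P}^{\out}_k$ I would argue polyhedrally. By construction it is a nonempty polyhedron (it contains $\mathcal{P}\neq\emptyset$) of the form $\bigcap_i\{y\in\R^q\mid (w^i)^{\mathsf{T}}y\geq \beta_i\}$ whose defining outer normals are the generators $w^1,\ldots,w^J$ of $C^+$ together with the finitely many dual-optimal vectors $w^v$ produced by the cuts $\mathcal{H}_k$ in the completed iterations. By dual feasibility each such $w^v$ lies in $C^+=\conv\cone\{w^1,\ldots,w^J\}$, so the cone generated by the entire collection of normals is exactly $C^+$. For a nonempty polyhedron the recession cone is the set of directions making a nonnegative inner product with every defining normal, hence with every element of the generated cone; thus $\recc\mathcal{P}^{\out}_k=\{d\in\R^q\mid w^{\mathsf{T}}d\geq 0\ \forall w\in C^+\}=(C^+)^+$. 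Since $C$ is a closed convex cone, the bipolar theorem gives $(C^+)^+=C$, so $\recc\mathcal{P}^{\out}_k=C$, and this holds for every $k\geq 0$ because the generators $w^1,\ldots,w^J$ are present in every iteration and all added normals stay inside $C^+$.

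The main obstacle is the limiting/normalization step in the first two parts that upgrades ``$d$ is a recession direction of $K+C$'' to ``$d\in C$'': it is precisely the boundedness of the compact part (so its scaled contribution vanishes) together with the closedness of $C$ that forces $d\in C$. For $\mathcal{P}^{\out}_k$ the crux is instead the identity $\cone\{\text{normals}\}=C^+$, after which the conclusion follows at once from the bipolar theorem.
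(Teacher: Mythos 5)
Your proof is correct, and its treatment of $\mathcal{P}^\out_k$ takes a genuinely different route from the paper's. For $\recc\mathcal{P}=C$ and $\recc\mathcal{P}^\inn_k=C$ there is no real difference: the paper asserts ``compact set plus closed cone has recession cone equal to the cone'' as immediate, and your limiting argument ($(k_\lambda-a)/\lambda\to 0$, hence $c_\lambda/\lambda\to d\in C$ by closedness of $C$) is precisely the standard proof of that assertion, so you have only filled in details the paper omits. The substantive divergence is the outer approximation. The paper sandwiches: from $\mathcal{P}\subseteq\mathcal{P}^\out_k\subseteq\mathcal{P}^\out_0$ and monotonicity of recession cones under inclusion of nonempty closed convex sets (cited to an external reference) it gets $C\subseteq\recc\mathcal{P}^\out_k\subseteq\recc\mathcal{P}^\out_0$, and then proves $\recc\mathcal{P}^\out_0\subseteq C$ by testing only the $J$ initial inequalities from \eqref{eq:P_0} at the points $\Gamma(x^j)$, which yields $(w^j)^{\mathsf{T}}\bar y\geq 0$ for all $j$ and hence $\bar y\in (C^+)^+=C$. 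You instead compute $\recc\mathcal{P}^\out_k$ directly from its full $H$-representation: every cut normal $w^v$ is dual feasible for \eqref{eq:D2(v)} and therefore lies in $C^+$, so the cone generated by all defining normals is exactly $C^+$, and the polyhedral recession formula together with the bipolar theorem gives $\recc\mathcal{P}^\out_k=(C^+)^+=C$ in one stroke, both inclusions at once. What each buys: the paper's reduction to $k=0$ never needs to know where the normals added during the algorithm live, only that the approximations are nested between $\mathcal{P}$ and $\mathcal{P}^\out_0$, but it requires the separate lower-bound step and the inclusion-monotonicity fact; your argument is self-contained and symmetric, but it hinges on the dual feasibility $w^v\in C^+$ of every added cut --- true here and easily read off from the dual formulation, yet an ingredient the paper's proof never has to invoke.
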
	

\begin{proof}
	As $\conv\Gamma(\mathcal{X}_k)$ is a compact set, we have $\recc \mathcal{P}^\inn_k = C$ directly from \eqref{eq:innerPk}. Similarly, since $\Gamma(\mathcal{X})$ is compact by \Cref{assmp} and $\mathcal{P} = \Gamma(\mathcal{X})+C$ by \Cref{rem:compact}, we have $\recc \mathcal{P} = C$. 
	Since $\mathcal{P}^\out_k \supseteq \mathcal{P}$, we have $\recc\mathcal{P}^\out_k \supseteq \recc \mathcal{P} = C$; see \cite[Proposition~2.5]{dinh1989theory}. In order to conclude that $\recc\mathcal{P}^\out_k = C$, it is enough to show that $\recc\mathcal{P}^\out_0 \subseteq C$. Indeed, we have $\mathcal{P}^\out_k \subseteq \mathcal{P}^\out_0$, which implies that $\recc\mathcal{P}^\out_k \subseteq \recc\mathcal{P}^\out_0$. To prove $\recc \mathcal{P}^\out_0 \subseteq C$, let $\bar{y} \in \recc \mathcal{P}^\out_0$. Then, for each $y \in \mathcal{P}^\out_0$, we have $y + \bar{y} \in \mathcal{P}^\out_0$. By the definition of $\mathcal{P}^\out_0$ in \eqref{eq:P_0}, we have $(w^j)^{\mathsf{T}}(y + \bar{y}) \geq (w^j)^{\mathsf{T}}\Gamma(x^j)$ for each $j \in \{1, \dots, J\}$. In particular, as $\Gamma(x^j) \in \mathcal{P} \subseteq \mathcal{P}^\out_0$, we have
$	(w^j)^{\mathsf{T}}(\Gamma(x^j) + \bar{y}) \geq (w^j)^{\mathsf{T}}\Gamma(x^j)$,
	hence $(w^j)^{\mathsf{T}}\bar{y} \geq 0$ for each $j \in \{1, \dots, J\}$. By the definition of dual cone and using $(C^+)^+ = C$, we have $\bar{y} \in C$. The assertion holds as $\bar{y}\in \recc\mathcal{P}^\out_0$ is arbitrary. 
\end{proof}

\begin{lemma} \label{lem:Hdist_vertex}
	Suppose that Assumptions~\ref{assmp} and~\ref{assmp:C_poly} hold. Let $k\geq 0$. Then, 
	\begin{align}\notag 
	\delta^H(\mathcal{P}^\out_k, \mathcal{P}^\inn_k) = \max_{v \in \mathcal{V}_k} \ d(v, \mathcal{P}^\inn_k) \text{~and~~} \delta^H(\mathcal{P}^\out_k, \mathcal{P}) = \max_{v \in \mathcal{V}_k} \ d(v, \mathcal{P}),
	\end{align}
	where $\mathcal{V}_k$ is the set of vertices of $\mathcal{P}^\out_k$. 
\end{lemma}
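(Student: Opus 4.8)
The plan is to exploit the inclusions $\mathcal{P}^\inn_k \subseteq \mathcal{P} \subseteq \mathcal{P}^\out_k$ so that both identities become instances of a single statement: computing $\sup_{y\in\mathcal{P}^\out_k} d(y,B)$ for a closed convex set $B \in \{\mathcal{P}^\inn_k, \mathcal{P}\}$ satisfying $B = B + C$. First I would observe that, since $B \subseteq \mathcal{P}^\out_k$, every point of $B$ lies in $\mathcal{P}^\out_k$, so $\sup_{z \in B} d(z, \mathcal{P}^\out_k) = 0$; hence the first expression for the Hausdorff distance in \eqref{eq:Hausdorff} collapses to $\delta^H(\mathcal{P}^\out_k, B) = \sup_{y \in \mathcal{P}^\out_k} d(y, B)$. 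The task is therefore reduced to showing $\sup_{y \in \mathcal{P}^\out_k} d(y, B) = \max_{v \in \mathcal{V}_k} d(v, B)$.

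Next I would pass from the unbounded supremum to a bounded one. By \Cref{lem:recession_directions}, $\recc \mathcal{P}^\out_k = C$, which is pointed under \Cref{assmp}; hence $\mathcal{P}^\out_k$ contains no lines, its vertex set $\mathcal{V}_k$ is nonempty, and its $V$-representation reads $\mathcal{P}^\out_k = \conv \mathcal{V}_k + C$. The key monotonicity observation is that $y \mapsto d(y, B)$ does not increase along directions in $C$: for $c \in C$ and any $b \in B$, the relation $B = B + C$ gives $b + c \in B$, so $d(y + c, B) \leq \norm{(y+c) - (b+c)} = \norm{y - b}$, and taking the infimum over $b \in B$ yields $d(y + c, B) \leq d(y, B)$. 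Writing an arbitrary $y \in \mathcal{P}^\out_k$ as $y = v + c$ with $v \in \conv \mathcal{V}_k$ and $c \in C$, this gives $d(y, B) \leq d(v, B)$, whence $\sup_{y \in \mathcal{P}^\out_k} d(y, B) = \sup_{v \in \conv \mathcal{V}_k} d(v, B)$.

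Finally, $d(\cdot, B)$ is a convex function (the distance to a nonempty convex set), and $\conv \mathcal{V}_k$ is a polytope, i.e.\ a compact convex set whose extreme points are exactly the elements of $\mathcal{V}_k$. Invoking the standard fact that a convex function attains its maximum over a compact convex set at an extreme point, I would conclude $\sup_{v \in \conv \mathcal{V}_k} d(v, B) = \max_{v \in \mathcal{V}_k} d(v, B)$. Chaining the three reductions gives the claimed equality for each choice of $B$, which is the two assertions of the lemma.

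The main obstacle I anticipate is not a single deep step but the clean assembly of the pieces: one must justify that $\mathcal{P}^\out_k$ genuinely admits the representation $\conv \mathcal{V}_k + C$, which relies on the pointedness of $C$ to rule out lines and to guarantee that $\mathcal{V}_k$ is nonempty and coincides with the extreme-point set; and one must correctly handle the unboundedness of $\mathcal{P}^\out_k$, since the maximization of a convex function is only well-behaved over compact sets. The monotonicity of $d(\cdot, B)$ along the recession cone $C$ is precisely what tames the unbounded directions and allows the compact-polytope argument to apply.
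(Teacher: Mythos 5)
Your proof is correct, and it takes a route that differs from the paper's in how the key reduction to vertices is carried out. The paper's proof shares your first step (collapsing the Hausdorff distance to the one-sided supremum $\sup_{y \in \mathcal{P}^\out_k} d(y,B)$ via $B \subseteq \mathcal{P}^\out_k$), but then proceeds by citation: it uses \Cref{lem:recession_directions} to conclude $\recc \mathcal{P}^\inn_k = \recc \mathcal{P}^\out_k$ and invokes a result of Facchinei and Pang to get $\delta^H(\mathcal{P}^\out_k,\mathcal{P}^\inn_k) < +\infty$, and then appeals to known propositions (L\"ohne--Wei{\ss}ing) stating that a convex function with finite supremum over a polyhedron having at least one vertex attains that supremum at a vertex; the second identity is obtained by the sandwich $\mathcal{P}^\inn_k \subseteq \mathcal{P} \subseteq \mathcal{P}^\out_k$. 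You instead re-prove the content of those citations in this special setting: the $V$-representation $\mathcal{P}^\out_k = \conv \mathcal{V}_k + C$ (valid by \Cref{lem:recession_directions} and pointedness of $C$), the monotonicity $d(y+c,B) \leq d(y,B)$ for $c \in C$ coming from $B + C = B$, and the Bauer maximum principle on the compact polytope $\conv \mathcal{V}_k$. What your argument buys is self-containedness and a cleaner logical structure: the finiteness of the Hausdorff distance, which the paper must establish separately as a hypothesis for its cited attainment result, falls out automatically once the supremum is identified with a maximum over the finite set $\mathcal{V}_k$; moreover, the monotonicity step makes transparent the geometric reason the vertices suffice, namely that the distance to an upper set $B = B + C$ can only decrease along the recession directions of $\mathcal{P}^\out_k$. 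What the paper's route buys is brevity, at the cost of importing two external results and treating both sets $\mathcal{P}^\inn_k$ and $\mathcal{P}$ slightly asymmetrically, whereas your single-statement formulation for a generic convex $B$ with $B = B + C$ handles both identities uniformly.
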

\begin{proof}
	To see the first equality, note that
	\begin{align}\notag 
	\delta^H(\mathcal{P}^\out_k, \mathcal{P}^\inn_k) = \max \bigg\{\sup_{y \in \mathcal{P}^\out_k} \ d(y, \mathcal{P}^\inn_k), \sup_{y \in \mathcal{P}^\inn_k} \ d(y, \mathcal{P}^\out_k) \bigg\} = \sup_{y \in \mathcal{P}^\out_k} \ d(y, \mathcal{P}^\inn_k)
	\end{align}
	as $\mathcal{P}^\inn_k \subseteq \mathcal{P}^\out_k$. Moreover, since $\recc \mathcal{P}^\inn_k = \recc\mathcal{P}^\out_k$ by \Cref{lem:recession_directions}, $\delta^H (\mathcal{P}^\out_k, \mathcal{P}^\inn_k) < \infty$ holds correct; see \cite[Lemma~6.3.15]{facchinei2007finite}. Since $\mathcal{P}^\out_k$ is a polyhedron with at least one vertex, $d(\cdot , \mathcal{P}^\inn_k)$ is a convex function 
	(see, for instance, \cite[Proposition 1.77]{mordukhovich2013easy}) and  $\delta^H(\mathcal{P}^\out_k, \mathcal{P}^\inn_k)<+\infty$, we have
	\begin{align}\notag 
	\sup_{y \in \mathcal{P}^\out_k}\ d(y, \mathcal{P}^\inn_k) = \max_{v\in \mathcal{V}_k} \ d(v, \mathcal{P}^\inn_k)
	\end{align}
	from \cite[Propositions~7-8]{lohne2017solving}. The second equality can be shown similarly by noting that $\delta^H(\mathcal{P}^\out_k, \mathcal{P}) \leq \delta^H(\mathcal{P}^\out_k, \mathcal{P}^\inn_k) < \infty$ thanks to $\mathcal{P}^\inn_k \subseteq \mathcal{P} \subseteq \mathcal{P}^\out_k$.
\end{proof}

\begin{theorem}\label{thm:alg}
	Under Assumptions~\ref{assmp} and~\ref{assmp:C_poly}, \Cref{alg} works correctly: if the algorithm terminates, then it returns a finite weak $\epsilon$-solution to \eqref{eq:P}.
\end{theorem}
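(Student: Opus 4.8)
The plan is to verify the two defining properties of a finite weak $\epsilon$-solution from \Cref{defn:finite epsilon-solution} for the returned set $\mathcal{X}_k$, where $k$ denotes the iteration index at termination: first, that every element of $\mathcal{X}_k$ is a weak minimizer; and second, that $\mathcal{X}_k$ is a finite $\epsilon$-infimizer, i.e.\ $\mathcal{P}^{\inn}_k + \mathbb{B}_{\epsilon} \supseteq \mathcal{P}$ with $\mathcal{P}^{\inn}_k = \conv\Gamma(\mathcal{X}_k)+C$ as in \eqref{eq:innerPk}. Finiteness and nonemptiness of $\mathcal{X}_k$ are immediate, since it is initialized as $\{x^1,\dots,x^J\}$ and only finitely many points are appended thereafter.

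For the first property, the initial points $x^1,\dots,x^J$ are optimal solutions of $(\mathrm{WS}(w^j))$ with $w^j\in C^+\setminus\{0\}$, hence weak minimizers by \Cref{prop:jahn2009vector}. Every point $x^v$ appended later corresponds to a vertex $v$ of some outer approximation $\mathcal{P}^{\out}_{k'}$. Since each $\mathcal{P}^{\out}_{k'}\supseteq\mathcal{P}$, such a vertex cannot lie in $\Int\mathcal{P}$ (otherwise a small ball around $v$ would lie in $\mathcal{P}\subseteq\mathcal{P}^{\out}_{k'}$, contradicting that $v$ is a vertex); thus $v\notin\Int\mathcal{P}$ and \Cref{prop:zv0} guarantees that $x^v$ is a weak minimizer. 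Hence $\mathcal{X}_k$ consists only of weak minimizers.

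The crux is to show that at termination every vertex $v\in\mathcal{V}_k$ of the final outer approximation satisfies $x^v\in\mathcal{X}_k$; I expect this bookkeeping step, rather than any analytic estimate, to be the main obstacle. Because the algorithm terminated, the stopping flag was not reset during the final pass of the for-loop, so each $v\in\mathcal{V}_k$ is either processed afresh in that pass with $\norm{z^v}\le\epsilon$ (so $x^v$ is appended), or else $v\in\mathcal{V}^{\text{known}}$. In the latter case I claim $v$ must have been processed earlier with $\norm{z^v}\le\epsilon$: if instead $\norm{z^v}>\epsilon>0$ had held, then $v\notin\mathcal{P}$ (since $d(v,\mathcal{P})=\norm{z^v}$ by \Cref{rem:dist}), so $w^v\ne 0$ by \Cref{lem:vlemma} and the halfspace $\mathcal{H}_{k'}$ of \eqref{eq:Hk} would have been intersected in. Combining strong duality (\Cref{prop:optsol}) with \Cref{rem:wss} gives $\norm{z^v}=\inf_{x\in\mathcal{X}}(w^v)^{\mathsf{T}}\Gamma(x)-(w^v)^{\mathsf{T}}v=(w^v)^{\mathsf{T}}\Gamma(x^v)-(w^v)^{\mathsf{T}}v$, whence $(w^v)^{\mathsf{T}}v=(w^v)^{\mathsf{T}}\Gamma(x^v)-\norm{z^v}<(w^v)^{\mathsf{T}}\Gamma(x^v)$; this places $v$ strictly outside $\mathcal{H}_{k'}$, hence outside all subsequent $\mathcal{P}^{\out}_{k''}$, contradicting $v\in\mathcal{V}_k\subseteq\mathcal{P}^{\out}_k$. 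Therefore $v$ was appended with $\norm{z^v}\le\epsilon$, and since $\mathcal{X}$ never shrinks, $x^v\in\mathcal{X}_k$ in all cases.

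With this in hand the estimate follows. For each $v\in\mathcal{V}_k$, primal feasibility of $(\mathrm{P}(v))$ gives $\Gamma(x^v)\leq_C y^v:=v+z^v$, and since $x^v\in\mathcal{X}_k$ we obtain $y^v\in\Gamma(x^v)+C\subseteq\conv\Gamma(\mathcal{X}_k)+C=\mathcal{P}^{\inn}_k$; consequently $d(v,\mathcal{P}^{\inn}_k)\le\norm{v-y^v}=\norm{z^v}\le\epsilon$. By \Cref{lem:Hdist_vertex} this yields $\delta^H(\mathcal{P}^{\out}_k,\mathcal{P}^{\inn}_k)=\max_{v\in\mathcal{V}_k}d(v,\mathcal{P}^{\inn}_k)\le\epsilon$, and since $\mathcal{P}^{\inn}_k\subseteq\mathcal{P}^{\out}_k$, the characterization \eqref{eq:Hausdorff} gives $\mathcal{P}^{\out}_k\subseteq\mathcal{P}^{\inn}_k+\mathbb{B}_{\epsilon}$. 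Finally, $\mathcal{P}\subseteq\mathcal{P}^{\out}_k$ implies $\mathcal{P}\subseteq\mathcal{P}^{\inn}_k+\mathbb{B}_{\epsilon}$, which is precisely the $\epsilon$-infimizer condition, completing the proof.
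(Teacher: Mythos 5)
Your proof is correct and follows essentially the same route as the paper's: weak minimizers via Propositions \ref{prop:jahn2009vector} and \ref{prop:zv0}, then the inclusion $\mathcal{P}\subseteq\mathcal{P}^{\inn}_k+\mathbb{B}_{\epsilon}$ obtained from the vertex bound $d(v,\mathcal{P}^{\inn}_k)\le\norm{z^v}\le\epsilon$ together with \Cref{lem:Hdist_vertex} and $\mathcal{P}\subseteq\mathcal{P}^{\out}_k$. The one place you go beyond the paper is a genuine improvement in rigor: the paper simply asserts from the stopping condition that $\norm{z^v}\le\epsilon$ holds for every $v\in\mathcal{V}_k$ at termination, whereas your strong-duality computation $(w^v)^{\mathsf{T}}v=(w^v)^{\mathsf{T}}\Gamma(x^v)-\norm{z^v}<(w^v)^{\mathsf{T}}\Gamma(x^v)$, showing that a vertex processed with $\norm{z^v}>\epsilon$ lies strictly outside the added halfspace and hence can never reappear in a later outer approximation, correctly disposes of the subtle case of vertices already in $\mathcal{V}^{\text{known}}$, which the paper leaves implicit.
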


\begin{proof}
	For all $j\in \{1,\ldots, J\}$, an optimal solution $x^j$ of (WS$(w^j)$) exists since $\mathcal{X}$ is compact and $x \mapsto (w^j)^{\mathsf{T}}\Gamma(x)$ is continuous by the continuity of $\Gamma \colon X \rightarrow \R^q$ provided by \Cref{assmp}. Moreover, $x^j$ is a weak minimizer of $\eqref{eq:P}$ by \Cref{prop:jahn2009vector}. Thus, $\mathcal{{X}}_0$ consists of weak minimizers.	
	
	Since $\eqref{eq:P}$ is a bounded problem and $C$ is a pointed cone, the set $\mathcal{P}^\out_0$ contains no lines. Hence,  
	$\mathcal{P}^\out_0$ has at least one vertex \cite[Corollary 18.5.3]{rockafellar1970convex}, that is, $\mathcal{V}_0 \neq \emptyset$. Moreover, as detailed after equation \eqref{eq:P_0}, $\mathcal{P}^\out_0\supseteq \mathcal{P}$, hence we have $v\notin \Int \mathcal{P}$ for any $v \in\mathcal{V}_0$. As it will be discussed below, for $k\geq 1$, $\mathcal{P}^\out_k$ is constructed by intersecting $\mathcal{P}^\out_0$ with supporting halfspaces of $\mathcal{P}$. This implies $\mathcal{V}_k\neq \emptyset$ and $\mathcal{V}_k \subseteq \R^q \setminus \Int \mathcal{P}$ hold for any $k\geq 0$.
	
	By \Cref{prop:optsol}, optimal solutions ($x^v$,$z^v$) and $w^v$ to \eqref{eq:P2(v)} and \eqref{eq:D2(v)}, respectively, exist. Moreover, by \Cref{prop:zv0}, $x^v$ is a weak minimizer of $\eqref{eq:P}$. 
	If $\norm{z^v}>0$, then $v\notin \mathcal{P}$, hence $w^v \neq 0$ by \Cref{lem:vlemma}. By \Cref{prop:supp_halfspace}, $\mathcal{H}_k$ given by \eqref{eq:Hk} is a supporting halfspace of $\mathcal{P}$ at $\Gamma(x^v)$. Then, since $\mathcal{P}^\out_0\supseteq \mathcal{P}$ and $\mathcal{P}^\out_{k+1} = \mathcal{P}^\out_k\cap\mathcal{H}_k$, we have $\mathcal{P}^\out_k \supseteq \mathcal{P}$ for all $k\geq 0$.
	
	Assume that the algorithm stops after $\hat{k}$ iterations. Since $\mathcal{X}_{\hat{k}}$ is finite and consists of weak minimizers, to prove $\mathcal{X}_{\hat{k}}$ is a finite weak $\epsilon$-solution of $\eqref{eq:P}$ as in \Cref{defn:finite epsilon-solution}, it is sufficient to show that
$	{\mathcal{P}}^\inn_{\hat{k}} + \mathbb{B}_\epsilon \supseteq \mathcal{P}$,
	where ${\mathcal{P}}^\inn_{\hat{k}} = \conv \Gamma(\mathcal{X}_{\hat{k}}) + C$.	
	
	Note that by the stopping condition, we have $\norm{z^v} \leq \epsilon$, hence $x^v \in \mathcal{X}_{\hat{k}}$, for all $v\in \mathcal{V}_{\hat{k}}$. Moreover, since $(x^v, z^v)$ is feasible for \eqref{eq:P2(v)}, 
$	v + z^v \in \{\Gamma(x^v)\} + C \subseteq \conv \Gamma(\mathcal{X}_{\hat{k}}) + C = {\mathcal{P}}^\inn_{\hat{k}}$.
	Then, by \Cref{lem:Hdist_vertex},
	\begin{align}\notag 
	\delta^H({\mathcal{P}}^\out_{\hat{k}},\mathcal{P}^\inn_{\hat{k}}) = \max_{v \in \mathcal{V}_{\hat{k}}} \ d(v, \mathcal{{P}}^\inn_{\hat{k}}) = \max_{v \in \mathcal{V}_{\hat{k}}} \inf_{u \in \mathcal{{P}}^\inn_{\hat{k}}} \norm{u - v} \leq \max_{v \in \mathcal{V}_{\hat{k}}} \norm{z^v} \leq \epsilon.
	\end{align}
	Consequently, ${\mathcal{P}}^\inn_{\hat{k}} + \mathbb{B}_\epsilon \supseteq \mathcal{P}$ follows since
$	\mathcal{{P}}^\inn_{\hat{k}} + \mathbb{B}_\epsilon \supseteq \mathcal{P}^\out_{\hat{k}} \supseteq \mathcal{P}$.
\end{proof}


\section{The modified algorithm} \label{sec:alg2}

In this section, we propose a modification of \Cref{alg} and prove its correctness. Recall that, in \Cref{thm:alg}, we show that \Cref{alg} returns a finite weak $\epsilon$-solution, provided that it terminates. The purpose of this section is to propose an algorithm for which we can prove finiteness as well; the proof of finiteness will be presented separately in \Cref{sec:fin}.

The main feature of the modified algorithm, \Cref{alg1} is that, in each iteration, it intersects the current outer approximation of $\mathcal{P}$ with a fixed halspace $S$ and considers only the vertices of the intersection. As we describe next, the halfspace $S$ is formed in such a way that $\mathcal{P}\cap S$ is compact and $\Gamma(\mathcal{X})\subseteq \mathcal{P}\cap S$. 

\setcounter{algo}{2}
\begin{algorithm}
	\caption{Modified Outer Approximation Algorithm for \eqref{eq:P}}
	\label{alg1}
	\algsetup{
		linenosize=\small,
		linenodelimiter=.
	}
	\begin{algorithmic}[1]
		\STATE Compute an optimal solution $x^j$ of (WS$(w^j)$) for each $j \in \{1,\ldots,J\}$;
		\STATE Set $k = 0, \bar{\mathcal{{X}}}_0 = \{x^1,\dots,x^J\}, \mathcal{V}^{\text{known}} = \emptyset$;
		\STATE Store an $H$-representation of $\mathcal{P}^{\out}_0$ according to \eqref{eq:P_0};
		\STATE Compute the set $\mathcal{V}_0$ of vertices of $\mathcal{P}^\out_0$ from its $H$-representation;
		\STATE $\bar{\mathcal{P}}^\out_0 = \mathcal{P}^\out_0$;
		\FOR{$v\in\mathcal{V}_0$}
		\STATE Solve \eqref{eq:P2(v)} and \eqref{eq:D2(v)} to compute $(x^v, z^v)$, $w^v$, and $d(v,\mathcal{P})$;
		\STATE $\mathcal{V}^{\text{known}} \gets \mathcal{V}^{\text{known}} \cup \{v\}$;
		\IF{$\norm{z^v} > \epsilon$}
		\STATE $\bar{\mathcal{P}}^\out_{0} \gets \bar{\mathcal{P}}^\out_0 \cap \{y \in \mathbb{R}^q \mid (w^v)^{\mathsf{T}}y \geq (w^v)^{\mathsf{T}} \Gamma(x^v) \}$;
		\ELSE
		\STATE $\bar{\mathcal{X}_{0} }\gets \bar{\mathcal{X}}_0\cup \{x^v\}$;
		\ENDIF
		\ENDFOR
		\STATE Compute $\beta$ by \Cref{rem:S_computation} and $\alpha$ by \eqref{eq:alpha};
		\STATE Store an $H$-representations $S$ according to \eqref{eq:S};
		\REPEAT
		\STATE Stop $\gets \TRUE$;
		\STATE Compute the set $\bar{\mathcal{V}}_k$ of vertices of $\bar{\mathcal{P}}^\out_k\cap S$ from its $H$-representation;
		\FOR{$v \in \bar{\mathcal{V}}_k$} 
		\STATE Follow lines 8-20 of \Cref{alg} using $\bar{\mathcal{P}}^{\out}_{k}$, $\bar{\mathcal{P}}^{\out}_{k+1}$, $\bar{\mathcal{X}_{k}}$, $\bar{\mathcal{X}}_{k+1}$\footnotemark;
		\ENDFOR
		\UNTIL{Stop}
		\RETURN		 
		$ \begin{cases} 
		\bar{\mathcal{{X}}}_k &: \text{A finite weak $\epsilon$-solution to \eqref{eq:P}}; \\
		\bar{\mathcal{P}}^\out_k &: \text{An outer approximation of~} \mathcal{P}.\\
		\end{cases}$
	\end{algorithmic}
\end{algorithm}
\footnotetext{\rev{More precisely, in \Cref{alg}, we apply $\bar{\mathcal{P}}^{\out}_{k+1}=\bar{\mathcal{P}}^{\out}_{k}\cap\mathcal{H}_k$ in line 12, $\bar{\mathcal{X}}_{k+1}=\bar{\mathcal{X}}_k$ in line 13, $\bar{\mathcal{X}}_k\gets \bar{\mathcal{X}}\cup\{x^v\}$ in line 18.}}


For the construction of $S$, let us define
\begin{equation}\label{eq:w_bar}
\bar{w} := \frac{\sum_{j = 1}^{J} w^j}{\norm{\sum_{j = 1}^{J} w^j}_\ast} \in \Int C^+,
\end{equation}
and fix $\beta\in \R$ such that
\begin{equation} \label{eq:beta}
\beta \geq \sup_{x \in \mathcal{X}} \bar{w}^{\mathsf{T}} \Gamma(x).
\end{equation}
The existence of such $\beta$ is guaranteed by \Cref{assmp} since $\mathcal{X}\subseteq \R^n$ is a compact set and $x \mapsto \bar{w}^{\mathsf{T}}\Gamma(x)$ is a continuous function on $X$ under this assumption. Note that $\beta$ is an upper bound on the optimal value of an optimization problem that may fail to be convex, in general. We address some possible ways of computing $\beta$ in \Cref{rem:S_computation} below.

\begin{remark}\label{rem:S_computation}
	Note that, $\bar{w}^{\mathsf{T}} \Gamma(x)$ is convex as $\Gamma:X\to\R^q$ is $C$-convex and $\bar{w} \in \Int C^+$. Hence, $\sup_{x \in \mathcal{X}} \bar{w}^{\mathsf{T}} \Gamma(x)$ is a concave minimization problem over a compact convex set $\mathcal{X}$. Concave minimization is a well-known problem type in optimization for which numerous algorithms available in the literature to find a global optimal solution, see for instance \cite{benson1995concave, benson1991branch}. In our case, it is enough to run a single iteration of one such algorithm to find $\beta$.
	
\end{remark}

In addition to $\bar{w}$ and $\beta$, we fix $\alpha\in\R$ such that
\begin{align}\label{eq:alpha}
\alpha > \max_{{v \in \mathcal{V}_0}} (\bar{w}^{\mathsf{T}} v - \beta)^+ + \delta^H(\mathcal{P}^{\out}_0, \mathcal{P}),
\end{align}
where $\mathcal{P}^{\out}_0$ is the initial outer approximation used in \Cref{alg}, $\mathcal{V}_0$ is the set of vertices of $\mathcal{P}^\out_0$, and $a^+:=\max\{a,0\}$ for $a\in\R$. By \Cref{lem:Hdist_vertex}, we have $\delta^H(\mathcal{P}^\out_0, \mathcal{P}) = \max_{v \in \mathcal{V}_0} \ d(v, \mathcal{P})$. Moreover, for each $v\in\mathcal{V}_0$, we have $d(v,\mathcal{P}) = \norm{z^v}$, where $(x^v, z^v)$ is an optimal solution to \eqref{eq:P2(v)}. Hence, $\alpha$ can be computed once \eqref{eq:P2(v)} is solved for each $v\in\mathcal{V}_0$. Finally, using $\bar{w}, \alpha,\beta$, we define
\begin{align}\label{eq:S}
S := \{ y\in\R^q \mid \bar{w}^{\mathsf{T}}y \leq \beta+ \alpha\}. 
\end{align}

\Cref{alg1} starts with an initialization phase followed by a main loop that is similar to \Cref{alg}. The initialization phase starts by constructing $\mathcal{P}^\out_0$ according to \eqref{eq:P_0} (lines 1-4 of \Cref{alg1}) and computing the set $\mathcal{V}_0$ of its vertices. For each $v\in\mathcal{V}_0$, the problems \eqref{eq:P2(v)} and \eqref{eq:D2(v)} are solved (line 7). The common optimal value $\norm{z^v}=d(v,\mathcal{P})$ is used in the calculation of $\delta^H(\mathcal{P}^\out_0, \mathcal{P})$ as described above. Moreover, these problems yield a supporting halfspace of $\mathcal{P}$ which is used to refine the outer approximation (line 10) if $\norm{z^v}$ exceeds the predetermined error $\epsilon>0$. Otherwise, the solution of \eqref{eq:P2(v)} is added to the set of weak minimizers (line 12). We denote by $\bar{\mathcal{P}}_0^{\out}$ the refined outer approximation that is obtained at the end of the initialization phase.

The main loop of \Cref{alg1} (lines 17-23) follows the same structure as \Cref{alg} except that it computes the set $\bar{\mathcal{V}}_k$ of all vertices of $\bar{\mathcal{P}}^\out_k\cap S$ (as opposed to that of $\mathcal{P}^\out_k$) at each iteration $k\geq 0$ (line 19). The algorithm terminates if all the vertices in $\bar{\mathcal{V}}_k$ are within $\epsilon$ distance to $\mathcal{P}$. As opposed to \Cref{alg}, in \Cref{alg1}, the norm-minimizing scalarization \eqref{eq:P2(v)} is not solved for a vertex $v$ of $\bar{\mathcal{P}}^\out_k$ if it is not in $S$. In \Cref{thm:alg2}, we will prove that the modified algorithm works correctly even if it ignores such vertices. The next proposition, even though it is not directly used in the proof of \Cref{thm:alg2}, provides a geometric motivation for this result. In particular, it shows that if \eqref{eq:P2(v)} is solved for some $v \notin \Int S$, then the supporting halfspace obtained as in \Cref{prop:supp_halfspace} supports the upper image at a weakly $C$-minimal but not $C$-minimal element of the upper image.
\begin{proposition}\label{lem:yvk_wMin}
	Let $v$ be a vertex of $\bar{\mathcal{P}}^\out_k$ for some $k\geq 1$. If $v \notin \Int S$, then $y^v= v+z^v \in \wMin_C(\mathcal{P}) \setminus \Min_C(\mathcal{P})$.
\end{proposition}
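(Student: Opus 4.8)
The plan is to establish the two assertions about $y^v=v+z^v$ separately. For weak minimality I would simply invoke \Cref{prop:zv0}, which only requires $v\notin\Int\mathcal{P}$. Since $v$ is a vertex of $\bar{\mathcal{P}}^\out_k$ and this set is an outer approximation with $\mathcal{P}\subseteq\bar{\mathcal{P}}^\out_k$, we have $\Int\mathcal{P}\subseteq\Int\bar{\mathcal{P}}^\out_k$; a vertex never lies in the interior of its own polyhedron, so $v\notin\Int\bar{\mathcal{P}}^\out_k$ and hence $v\notin\Int\mathcal{P}$. \Cref{prop:zv0} then yields $y^v\in\wMin_C(\mathcal{P})$.

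The real content is to show $y^v\notin\Min_C(\mathcal{P})$, and the idea is to separate $y^v$ from all $C$-minimal points using the functional $\bar{w}$. First I would note that every $C$-minimal point of $\mathcal{P}$ lies in $\Gamma(\mathcal{X})$: if $y=\Gamma(x)+c\in\Min_C(\mathcal{P})$ with $c\in C\setminus\{0\}$, then $\Gamma(x)=y-c\in(\{y\}-(C\setminus\{0\}))\cap\mathcal{P}$, contradicting minimality, so $c=0$ and $y\in\Gamma(\mathcal{X})$. Combined with \eqref{eq:beta}, this gives $\bar{w}^{\mathsf{T}}y\leq\beta$ for every $y\in\Min_C(\mathcal{P})$, so it suffices to prove the strict inequality $\bar{w}^{\mathsf{T}}y^v>\beta$. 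To get this, I would write $\bar{w}^{\mathsf{T}}y^v=\bar{w}^{\mathsf{T}}v+\bar{w}^{\mathsf{T}}z^v$. The hypothesis $v\notin\Int S$ translates, via \eqref{eq:S}, into $\bar{w}^{\mathsf{T}}v\geq\beta+\alpha$. For the second term, the normalization $\norm{\bar{w}}_\ast=1$ from \eqref{eq:w_bar} and the definition of the dual norm give $\bar{w}^{\mathsf{T}}z^v\geq-\norm{\bar{w}}_\ast\norm{z^v}=-\norm{z^v}$. Now $\norm{z^v}=d(v,\mathcal{P})$ by \Cref{rem:dist}, and since $v\in\bar{\mathcal{P}}^\out_k\subseteq\mathcal{P}^\out_0$ (the outer approximations are built only by intersecting $\mathcal{P}^\out_0$ with supporting halfspaces), we have $d(v,\mathcal{P})\leq\sup_{y\in\mathcal{P}^\out_0}d(y,\mathcal{P})=\delta^H(\mathcal{P}^\out_0,\mathcal{P})$, the last equality following from $\mathcal{P}\subseteq\mathcal{P}^\out_0$ through \eqref{eq:Hausdorff}. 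Assembling these, $\bar{w}^{\mathsf{T}}y^v\geq\beta+\alpha-\delta^H(\mathcal{P}^\out_0,\mathcal{P})$, and the choice of $\alpha$ in \eqref{eq:alpha} forces $\alpha-\delta^H(\mathcal{P}^\out_0,\mathcal{P})>0$, whence $\bar{w}^{\mathsf{T}}y^v>\beta$ and therefore $y^v\notin\Min_C(\mathcal{P})$.

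I expect the main obstacle to be the two containments that make the norm estimate on $z^v$ effective: confirming from the update rule of \Cref{alg1} that $\bar{\mathcal{P}}^\out_k\subseteq\mathcal{P}^\out_0$ (so that $d(v,\mathcal{P})$ is genuinely controlled by $\delta^H(\mathcal{P}^\out_0,\mathcal{P})$), and correctly using $\norm{\bar{w}}_\ast=1$ to convert $\bar{w}^{\mathsf{T}}z^v$ into a bound in terms of $\norm{z^v}$. Everything else is bookkeeping around the definition of $\alpha$ in \eqref{eq:alpha}, which was engineered precisely so that $\alpha$ strictly exceeds $\delta^H(\mathcal{P}^\out_0,\mathcal{P})$; the $\max$ term there is not even needed for this particular estimate, but guarantees the strict gap.
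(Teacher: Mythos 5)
Your proof is correct and follows essentially the same route as the paper's: weak minimality via \Cref{prop:zv0} (using that a vertex of $\bar{\mathcal{P}}^\out_k$ cannot lie in $\Int\mathcal{P}$), and non-minimality via the chain combining H\"older's inequality with $\norm{\bar{w}}_\ast=1$, the bound $\norm{z^v}=d(v,\mathcal{P})\leq\delta^H(\mathcal{P}^\out_0,\mathcal{P})<\alpha$ from $\bar{\mathcal{P}}^\out_k\subseteq\mathcal{P}^\out_0$ and \eqref{eq:alpha}, and the condition $\bar{w}^{\mathsf{T}}v\geq\beta+\alpha$ from $v\notin\Int S$. The only cosmetic difference is that you phrase the conclusion as ``every $C$-minimal point lies in $\Gamma(\mathcal{X})$ and hence satisfies $\bar{w}^{\mathsf{T}}y\leq\beta$,'' whereas the paper shows $y^v\notin\Gamma(\mathcal{X})$ and deduces $\tilde{c}\neq 0$ in the decomposition $y^v=\Gamma(\tilde{x})+\tilde{c}$; these are the same fact in contrapositive form.
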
	
\begin{proof}
	Suppose that $v \notin \Int S$. As $v$ is a vertex of $\bar{\mathcal{P}}^\out_k$, we have $v\notin\Int\mathcal{P}$. By \Cref{prop:zv0}, $y^{v} \in \wMin_C(\mathcal{P})$; in particular, $y^v \in \mathcal{P}$. Using \Cref{rem:compact}, there exist $\tilde{x} \in \mathcal{X}, \tilde{c} \in C$ such that $y^v = \Gamma(\tilde{x}) + \tilde{c}$. Next, we show that $\tilde{c} \neq 0$, which implies $y^{v} \notin \Min_C(\mathcal{P})$. From H\"{o}lder's inequality and \eqref{eq:w_bar}, we have
$	\bar{w}^{\mathsf{T}} (v - y^{v}) \leq \norm{y^{v} - v} \norm{\bar{w}}_* = \norm{y^{v} - v} = \norm{z^{v}}$.
	Moreover, using $\bar{\mathcal{P}}^\out_k \subseteq \mathcal{P}^\out_0$, we obtain 
	\begin{align}\notag 
	\norm{z^{v}} =d(v,\mathcal{P})\leq \sup_{v^\prime\in \bar{\mathcal{P}}^\out_k}d(v^\prime,\mathcal{P})\leq \sup_{v^\prime\in \mathcal{P}^\out_0} d(v^\prime,\mathcal{P})= \delta^H(\mathcal{P}, \mathcal{P}^{\out}_0)  < \alpha, 
	\end{align}
	where the last inequality follows from \eqref{eq:alpha}. Together, these imply $\bar{w}^{\mathsf{T}} y^{v} > \bar{w}^{\mathsf{T}} v - \alpha$. Using \eqref{eq:S}, \eqref{eq:beta} and $v \notin \Int S$, we also have $\bar{w}^{\mathsf{T}}  v - \alpha \geq \sup_{x \in \mathcal{X}} \bar{w}^{\mathsf{T}}  \Gamma(x)$. Now, since $\bar{w}^{\mathsf{T}}  y^{v} > \sup_{x \in \mathcal{X}} \bar{w}^{\mathsf{T}}  \Gamma(x)$, it must be true that $y^{v} \notin \Gamma(\mathcal{X})$, which implies $\tilde{c} \neq 0$. 
\end{proof}

\rev{It may happen that a vertex of $\bar{\mathcal{P}}^\out_k$, $k\geq 1$, falls outside $S$. We will illustrate this case in \Cref{rem:outsideS} of \Cref{sec:experiments}.}
	
With the following lemma and remark, we show that $S$ satisfies the required properties mentioned at the beginning of this section. Note that \Cref{lem:PcapScompact} implies the compactness of $\mathcal{P}\cap S$ since $\mathcal{P}\subseteq \mathcal{P}^\out_0$.

\begin{lemma} \label{lem:PcapScompact}
	Suppose that Assumptions~\ref{assmp} and~\ref{assmp:C_poly} hold. Let $\mathcal{P}^\out_0$ and $S$ be as in \eqref{eq:P_0} and \eqref{eq:S}, respectively. Then, $\mathcal{P}^\out_0 \cap S$ is a compact set.
\end{lemma}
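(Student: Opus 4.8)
The plan is to show that $\mathcal{P}^\out_0 \cap S$ is closed and bounded, since a closed bounded subset of $\R^q$ is compact. Closedness is immediate: $\mathcal{P}^\out_0$ is a finite intersection of closed halfspaces by \eqref{eq:P_0} and $S$ is a single closed halfspace by \eqref{eq:S}, so $\mathcal{P}^\out_0 \cap S$ is a closed (polyhedral, convex) set. The real work is boundedness, which I would obtain by showing that the recession cone of $\mathcal{P}^\out_0 \cap S$ is trivial.

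First I would record that the set is nonempty, so that the recession-cone identity for intersections applies. For every $x \in \mathcal{X}$ we have $\Gamma(x) \in \mathcal{P} \subseteq \mathcal{P}^\out_0$ by the discussion following \eqref{eq:P_0}, and $\bar{w}^{\mathsf{T}}\Gamma(x) \leq \sup_{x' \in \mathcal{X}} \bar{w}^{\mathsf{T}}\Gamma(x') \leq \beta \leq \beta + \alpha$ by \eqref{eq:beta} and $\alpha > 0$ (which holds by \eqref{eq:alpha}), so $\Gamma(x) \in S$. Thus $\Gamma(\mathcal{X}) \subseteq \mathcal{P}^\out_0 \cap S \neq \emptyset$, and the standard identity $\recc(\mathcal{P}^\out_0 \cap S) = \recc \mathcal{P}^\out_0 \cap \recc S$ for nonempty closed convex sets (see, e.g., \cite{rockafellar1970convex}) is available. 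By \Cref{lem:recession_directions} applied with $k = 0$ we have $\recc \mathcal{P}^\out_0 = C$, while the recession cone of the halfspace $S = \{y : \bar{w}^{\mathsf{T}}y \leq \beta + \alpha\}$ is $\{y \in \R^q : \bar{w}^{\mathsf{T}}y \leq 0\}$. Hence $\recc(\mathcal{P}^\out_0 \cap S) = C \cap \{y : \bar{w}^{\mathsf{T}}y \leq 0\}$.

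The crux is to verify that this intersection is $\{0\}$, and this is the step I expect to carry the weight of the argument. It rests on the fact that an interior point of the dual cone is strictly positive on $C \setminus \{0\}$. Since $\bar{w} \in \Int C^+$ by \eqref{eq:w_bar}, there is $\rho > 0$ with $\{z \in \R^q : \norm{z - \bar{w}}_\ast \leq \rho\} \subseteq C^+$. Fix $y \in C \setminus \{0\}$ and choose $t > 0$ small enough that $z := \bar{w} - t y \in C^+$; then $0 \leq z^{\mathsf{T}}y = \bar{w}^{\mathsf{T}}y - t\, y^{\mathsf{T}}y$, so $\bar{w}^{\mathsf{T}}y \geq t\, y^{\mathsf{T}}y > 0$. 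Therefore no nonzero $y \in C$ can satisfy $\bar{w}^{\mathsf{T}}y \leq 0$, which gives $\recc(\mathcal{P}^\out_0 \cap S) = \{0\}$. A nonempty closed convex set with trivial recession cone is bounded, and combined with closedness this yields that $\mathcal{P}^\out_0 \cap S$ is compact.

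The only delicate point is the strict-positivity characterization of $\Int C^+$ used above; everything else is routine. As an alternative route that avoids \Cref{lem:recession_directions}, one could bound the coordinates of $y$ directly: the $H$-representation \eqref{eq:P_0} bounds each $(w^j)^{\mathsf{T}}y$ from below on $\mathcal{P}^\out_0$, while $\sum_{j=1}^{J}(w^j)^{\mathsf{T}}y = \norm{\sum_{j=1}^{J} w^j}_\ast\, \bar{w}^{\mathsf{T}}y$ is bounded from above on $S$, so each $(w^j)^{\mathsf{T}}y$ is also bounded from above; since $\{w^1, \dots, w^J\}$ spans $\R^q$ (because $C^+$ is solid), this confines $y$ to a bounded set.
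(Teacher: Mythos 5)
Your proof is correct, and it reaches the same destination as the paper (closedness plus a trivial recession cone implies compactness) but by a genuinely different route in the decisive step. The paper works directly with an arbitrary recession direction $r \in \recc(\mathcal{P}^\out_0 \cap S)$: from the $H$-representations it gets $(w^j)^{\mathsf{T}}r \geq 0$ for all $j$ and $\bar{w}^{\mathsf{T}}r \leq 0$, observes that a sum of nonnegative numbers being nonpositive forces $(w^j)^{\mathsf{T}}r = 0$ for every $j$, and then kills $r$ by a linear-algebra argument: the matrix $W$ of generators of $C^+$ has rank $q$ (since $C^+$ is solid), so an invertible $q \times q$ submatrix gives $r = 0$. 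You instead decompose via the intersection formula $\recc(\mathcal{P}^\out_0 \cap S) = \recc\mathcal{P}^\out_0 \cap \recc S$, import $\recc\mathcal{P}^\out_0 = C$ from \Cref{lem:recession_directions}, and then use the convex-geometric fact that an interior point of $C^+$ is strictly positive on $C \setminus \{0\}$ — which you prove from scratch, correctly. Your route is more modular (it reuses a lemma the paper has already proved, with no circularity, since \Cref{lem:recession_directions} does not depend on this one), and its key positivity step would survive verbatim for a non-polyhedral solid pointed cone, whereas the paper's rank argument is tied to the generators; on the other hand, the paper's proof is self-contained and avoids invoking the intersection formula for recession cones, for which nonemptiness of the intersection must be checked — a point you handle explicitly via $\Gamma(\mathcal{X}) \subseteq \mathcal{P}^\out_0 \cap S$ and which the paper leaves implicit when it picks $a \in \mathcal{P}^\out_0 \cap S$. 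Your sketched alternative at the end (bounding each $(w^j)^{\mathsf{T}}y$ below from \eqref{eq:P_0} and above from \eqref{eq:S}, then using that the $w^j$ span $\R^q$) is in fact the paper's argument transplanted from recession directions to points, so you have effectively found both proofs.
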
	

\begin{proof}
	Since $\mathcal{P}^\out_0$ and $S$ are closed sets, $\mathcal{P}^\out_0 \cap S$ is closed. Let $r \in\recc(\mathcal{P}^\out_0 \cap S)$ and $a \in \mathcal{P}^\out_0 \cap S$ be arbitrary. For every $\lambda \geq 0$, we have $a + \lambda r \in \mathcal{P}^\out_0 \cap S$. By the definition of $\mathcal{P}^\out_0$, this implies
	\begin{align}\label{eq:inP0}
	(w^j)^{\mathsf{T}} (a + \lambda r) \geq (w^j)^{\mathsf{T}} \Gamma(x^j) = \inf_{x \in \mathcal{X}} \ (w^j)^{\mathsf{T}} \Gamma(x),
	\end{align}
	for each $j \in \{1, \dots, J\}$. On the other hand, by the definition of $S$ in \eqref{eq:S}, we have 
	\begin{align} \label{eq:inS}
	\bar{w}^{\mathsf{T}} (a + \lambda r) \leq \beta + \alpha.
	\end{align}
	Since \eqref{eq:inP0} and \eqref{eq:inS} hold for every $\lambda \geq 0$, we have $	(w^j)^{\mathsf{T}} r \geq 0 $ for each $j \in \{1, \dots, J\}$ and $\bar{w}^{\mathsf{T}} r = \sum_{j = 1}^{J} (w^j)^{\mathsf{T}} r / \norm{\sum_{j = 1}^{J} w^j}_\ast\leq 0$, respectively. Together, these imply that
	\begin{align}\label{eq:rperpwj}
	(w^j)^{\mathsf{T}} r = 0
	\end{align}
	for each $j \in \{1, \dots, J\}$.
	Recall that $J \geq q$ is implied by Assumptions~\ref{assmp} and~\ref{assmp:C_poly}. Consider the $q \times J$ matrix, say $W$, whose columns are the generating vectors of $C^+$. Since $C^+$ is solid, which follows from $C$ being pointed, $\rank W = q$, see for instance \cite[Theorem 3.1]{burns1974polyhedral}. Consider a $q\times q$ invertible submatrix $\tilde{W}$ of $W$. From \eqref{eq:rperpwj}, we have $\tilde{W}^{\mathsf{T}}r = 0 \in \R^q$, which implies $r = 0$. As $r\in \recc(\mathcal{P}^\out_0 \cap S)$ is chosen arbitrarily, $\mathcal{P}^\out_0 \cap S$ is bounded, hence compact.	
\end{proof}


\begin{remark}\label{rem:Scontains} 
	It is clear by the definition of $S$ that $\Gamma(\mathcal{X})\subseteq S$. Let $k\geq0$. Since $\mathcal{P}\subseteq \bar{\mathcal{P}}^\out_k$, we also have $\Gamma(\mathcal{X}) \subseteq \bar{\mathcal{P}}^\out_k \cap S$. Then, using \Cref{rem:compact}, we obtain $\mathcal{P} = \Gamma(\mathcal{X}) + C \subseteq (\bar{\mathcal{P}}^\out_k \cap S) + C$. \rev{Also note that, if the algorithm terminates, then all the vertices in $\bar{\mathcal{V}}_k$ are within $\epsilon$ distance to $\mathcal{P}$.}
\end{remark}

\begin{remark}\label{rem:Soutside}
	\rev{In line 19 of \Cref{alg1}, if we compute $\mathcal{V}_k \cap S$ instead of $\bar{\mathcal{V}}_k$, that is, if we just ignore the vertices of $\bar{\mathcal{P}}_k^{\out}$ which are outside $S$, then we cannot guarantee returning a finite weak $\epsilon$-solution. This is because there may exist some vertices of $\bar{\mathcal{P}}_k^{\out}$ that are out of $S$ with distance to the upper image being larger than $\epsilon$. 
	Moreover, 
	$\conv (\mathcal{V}_k \cap S) + C$ may not contain $\mathcal{P}$.
	This will be illustrated in \Cref{rem:outsideS} of \Cref{sec:experiments}.}
\end{remark}

\begin{theorem}\label{thm:alg2}
	Under Assumptions~\ref{assmp} and \ref{assmp:C_poly}, \Cref{alg1} works correctly: if the algorithm terminates, then it returns a finite weak $\epsilon$-solution to \eqref{eq:P}.
\end{theorem}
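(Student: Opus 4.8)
The plan is to mirror the correctness proof of \Cref{alg} (\Cref{thm:alg}), making the necessary adjustments for the two structural differences: the use of $\bar{\mathcal{P}}^{\out}_k \cap S$ in the vertex computation and the fact that vertices outside $S$ are ignored. As before, the argument splits into three parts: first, that the returned set $\bar{\mathcal{X}}_{\hat{k}}$ consists only of weak minimizers; second, that the algorithm maintains valid outer approximations $\bar{\mathcal{P}}^{\out}_k \supseteq \mathcal{P}$ throughout; and third, that upon termination the inner approximation $\bar{\mathcal{P}}^{\inn}_{\hat{k}} := \conv \Gamma(\bar{\mathcal{X}}_{\hat{k}}) + C$ satisfies $\bar{\mathcal{P}}^{\inn}_{\hat{k}} + \mathbb{B}_\epsilon \supseteq \mathcal{P}$, which is exactly the defining condition of a finite weak $\epsilon$-solution in \Cref{defn:finite epsilon-solution}.

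First I would establish the weak-minimizer property. Every point added to $\bar{\mathcal{X}}_k$ comes either from the initial weighted-sum solutions $x^1,\dots,x^J$ (weak minimizers by \Cref{prop:jahn2009vector}) or from solving \eqref{eq:P2(v)} at some vertex $v$ with $\norm{z^v} \leq \epsilon$. In the latter case, since every such $v$ is a vertex of $\bar{\mathcal{P}}^{\out}_k \cap S \supseteq \mathcal{P}$ and thus satisfies $v \notin \Int \mathcal{P}$, \Cref{prop:zv0} guarantees that $x^v$ is a weak minimizer. Next I would verify the outer-approximation invariant $\bar{\mathcal{P}}^{\out}_k \supseteq \mathcal{P}$ by induction on $k$: the base case $\bar{\mathcal{P}}^{\out}_0 \supseteq \mathcal{P}$ holds because $\mathcal{P}^{\out}_0 \supseteq \mathcal{P}$ and each refining halfspace in the initialization phase is a supporting halfspace of $\mathcal{P}$ by \Cref{prop:supp_halfspace} (using $w^v \neq 0$ from \Cref{lem:vlemma} whenever $\norm{z^v} > \epsilon$, since then $v \notin \mathcal{P}$); the inductive step is identical, as each $\mathcal{H}_k$ is again a supporting halfspace of $\mathcal{P}$, and intersecting with $S$ does not affect the approximation object $\bar{\mathcal{P}}^{\out}_{k+1}$ itself (the set $S$ is used only to select which vertices to process).

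The crux is the termination argument, and here the key new ingredient is \Cref{rem:Scontains}, which supplies the inclusion $\mathcal{P} \subseteq (\bar{\mathcal{P}}^{\out}_{\hat{k}} \cap S) + C$. Suppose the algorithm terminates after $\hat{k}$ iterations. By the stopping condition, every vertex $v \in \bar{\mathcal{V}}_{\hat{k}}$ of $\bar{\mathcal{P}}^{\out}_{\hat{k}} \cap S$ satisfies $\norm{z^v} \leq \epsilon$, so $x^v \in \bar{\mathcal{X}}_{\hat{k}}$ and $v + z^v \in \{\Gamma(x^v)\} + C \subseteq \bar{\mathcal{P}}^{\inn}_{\hat{k}}$, giving $d(v, \bar{\mathcal{P}}^{\inn}_{\hat{k}}) \leq \norm{z^v} \leq \epsilon$ for each such vertex. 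Applying \Cref{lem:Hdist_vertex} to the polyhedron $\bar{\mathcal{P}}^{\out}_{\hat{k}} \cap S$ and its inner counterpart then yields $\delta^H(\bar{\mathcal{P}}^{\out}_{\hat{k}} \cap S, \bar{\mathcal{P}}^{\inn}_{\hat{k}}) = \max_{v \in \bar{\mathcal{V}}_{\hat{k}}} d(v, \bar{\mathcal{P}}^{\inn}_{\hat{k}}) \leq \epsilon$, whence $\bar{\mathcal{P}}^{\out}_{\hat{k}} \cap S \subseteq \bar{\mathcal{P}}^{\inn}_{\hat{k}} + \mathbb{B}_\epsilon$. Combining this with \Cref{rem:Scontains} gives
\begin{align}\notag
\mathcal{P} \subseteq (\bar{\mathcal{P}}^{\out}_{\hat{k}} \cap S) + C \subseteq \bar{\mathcal{P}}^{\inn}_{\hat{k}} + \mathbb{B}_\epsilon + C = \bar{\mathcal{P}}^{\inn}_{\hat{k}} + \mathbb{B}_\epsilon,
\end{align}
where the last equality uses $\bar{\mathcal{P}}^{\inn}_{\hat{k}} = \bar{\mathcal{P}}^{\inn}_{\hat{k}} + C$.

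I expect the main obstacle to be the compactness bookkeeping needed to invoke \Cref{lem:Hdist_vertex} on the truncated set $\bar{\mathcal{P}}^{\out}_{\hat{k}} \cap S$: unlike in \Cref{thm:alg}, the relevant polyhedron here is the intersection with $S$, so one must confirm that this set is a polyhedron with at least one vertex and that its recession cone still matches that of its inner approximation, so that the Hausdorff distance is finite and attained at the vertices. This is where \Cref{lem:PcapScompact} (compactness of $\mathcal{P}^{\out}_0 \cap S$, hence of $\bar{\mathcal{P}}^{\out}_{\hat{k}} \cap S$ since $\bar{\mathcal{P}}^{\out}_{\hat{k}} \subseteq \mathcal{P}^{\out}_0$) does the heavy lifting, guaranteeing the truncated outer approximation is in fact compact so the vertex-based distance formula applies cleanly.
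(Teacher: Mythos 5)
Your overall architecture coincides with the paper's: the weak-minimizer bookkeeping, the outer-approximation invariant, the reliance on \Cref{rem:Scontains}, and the final absorption $\bar{\mathcal{P}}^\inn_{\hat k}+\mathbb{B}_\epsilon+C=\bar{\mathcal{P}}^\inn_{\hat k}+\mathbb{B}_\epsilon$ are exactly what the paper does. However, the central step fails as written, and the error occurs twice in the same form: you overlook that truncation by $S$ produces a \emph{compact} set while $\mathcal{P}$ and $\bar{\mathcal{P}}^\inn_{\hat k}$ remain unbounded. First, the inclusion $\bar{\mathcal{P}}^\out_k\cap S\supseteq\mathcal{P}$, which you invoke to conclude $v\notin\Int\mathcal{P}$, never holds: by \Cref{lem:PcapScompact} the left-hand side is compact, whereas $\recc\mathcal{P}=C\neq\{0\}$. (The conclusion itself is true, but it needs the paper's argument: a vertex of $\bar{\mathcal{P}}^\out_k\cap S$ has $q$ active defining constraints, at most one of which can be $\bd S$, so it lies on the boundary of some supporting halfspace of $\mathcal{P}$ and hence outside $\Int\mathcal{P}$.) Second, and more seriously, \Cref{lem:Hdist_vertex} cannot be applied to the pair consisting of $\bar{\mathcal{P}}^\out_{\hat k}\cap S$ and $\bar{\mathcal{P}}^\inn_{\hat k}$: the lemma and its proof require the inner set to be contained in the outer set with matching recession cones, whereas here $\bar{\mathcal{P}}^\inn_{\hat k}\not\subseteq\bar{\mathcal{P}}^\out_{\hat k}\cap S$ (it contains rays in the directions of $C$) and $\recc(\bar{\mathcal{P}}^\out_{\hat k}\cap S)=\{0\}\neq C=\recc\bar{\mathcal{P}}^\inn_{\hat k}$. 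Consequently $\delta^H(\bar{\mathcal{P}}^\out_{\hat k}\cap S,\bar{\mathcal{P}}^\inn_{\hat k})=+\infty$, so your claimed equality with $\max_{v\in\bar{\mathcal{V}}_{\hat k}}d(v,\bar{\mathcal{P}}^\inn_{\hat k})\leq\epsilon$ is false. Your closing paragraph is internally inconsistent on precisely this point: compactness of the truncated outer set does not make the recession cones match; it is exactly what makes them mismatch.

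Both defects are repairable, and the repairs show where your route and the paper's diverge. What you actually need is only the one-sided bound $\sup_{y\in\bar{\mathcal{P}}^\out_{\hat k}\cap S}d(y,\bar{\mathcal{P}}^\inn_{\hat k})=\max_{v\in\bar{\mathcal{V}}_{\hat k}}d(v,\bar{\mathcal{P}}^\inn_{\hat k})\leq\epsilon$, not a Hausdorff distance: since $d(\cdot,\bar{\mathcal{P}}^\inn_{\hat k})$ is convex and $\bar{\mathcal{P}}^\out_{\hat k}\cap S$ is a compact polyhedron, the maximum is attained at a vertex, with no recession-cone considerations at all. This yields $\bar{\mathcal{P}}^\out_{\hat k}\cap S\subseteq\bar{\mathcal{P}}^\inn_{\hat k}+\mathbb{B}_\epsilon$, after which your chain $\mathcal{P}\subseteq(\bar{\mathcal{P}}^\out_{\hat k}\cap S)+C\subseteq\bar{\mathcal{P}}^\inn_{\hat k}+\mathbb{B}_\epsilon+C=\bar{\mathcal{P}}^\inn_{\hat k}+\mathbb{B}_\epsilon$ goes through verbatim; stated this way, your argument would in fact be slightly more elementary than the paper's. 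The paper takes the other natural repair: it restores matching recession cones by adding $C$ back \emph{before} comparing, i.e., it bounds $\delta^H(\bar{\mathcal{P}}^\inn_{\hat k},(\bar{\mathcal{P}}^\out_{\hat k}\cap S)+C)$, noting that both sets have recession cone $C$, that $\bar{\mathcal{P}}^\inn_{\hat k}\subseteq(\bar{\mathcal{P}}^\out_{\hat k}\cap S)+C$ by \Cref{rem:Scontains}, and that every vertex of $(\bar{\mathcal{P}}^\out_{\hat k}\cap S)+C$ is also a vertex of $\bar{\mathcal{P}}^\out_{\hat k}\cap S$, so the maximum still runs over a subset of $\bar{\mathcal{V}}_{\hat k}$. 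As submitted, though, your proof contains steps that are false, not merely under-justified, so it does not stand on its own.
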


\begin{proof}
	Similar to the proof of \Cref{thm:alg}, the set $\mathcal{\bar{X}}_0$ is initialized by weak minimizers of $\eqref{eq:P}$, and $\mathcal{V}_0$ is nonempty. Moreover, for each $v\in \mathcal{V}_0$, optimal solutions $(x^v,z^v)$ and $w^v$ exist (\Cref{prop:optsol}); $x^v$ is a weak minimizer (\Cref{prop:zv0}) and $\{y\in \R^q \mid (w^v)^{\mathsf{T}}y \geq (w^v)^{\mathsf{T}} \Gamma(x^v)\}$ is a supporting halfspace of $\mathcal{P}$ at $\Gamma(x^v)$ (\Cref{prop:supp_halfspace}). Hence, $\bar{\mathcal{P}}^\out_0 \supseteq \mathcal{P}$ is an outer approximation and $\mathcal{\bar{X}}_0$ consists of weak minimizers. By the definition of $S$, we have $\mathcal{V}_0 \subseteq S$. Hence, the set $\bar{\mathcal{V}}_0$ is nonempty.
	
	
	
	
	
	Considering the main loop of the algorithm, we know by \Cref{prop:optsol} that optimal solutions ($x^v$,$z^v$) and $w^v$ to \eqref{eq:P2(v)} and \eqref{eq:D2(v)}, respectively, exist. Moreover, if $\norm{z^v}\geq \epsilon$, then $w^v \neq 0$ by \Cref{lem:vlemma}. Hence, by \Cref{prop:supp_halfspace}, $\mathcal{H}_k$ given by \eqref{eq:Hk} is a supporting halfspace of $\mathcal{P}$. This implies $\bar{\mathcal{P}}^\out_k \supseteq \mathcal{P}$ for all $k\geq 0$. Since $\bar{\mathcal{P}}^\out_k \supseteq \mathcal{P} \supseteq \Gamma(\mathcal{X})$ and $S \supseteq \Gamma(\mathcal{X})$, see \Cref{rem:Scontains}, the set $\bar{\mathcal{P}}^\out_k\cap S$ is nonempty. Moreover, as $\bar{\mathcal{P}}^\out_k \subseteq \mathcal{P}^\out_0$, it is true that $\bar{\mathcal{P}}^\out_k \cap S$ is compact by \Cref{lem:PcapScompact}. Then, $\bar{\mathcal{V}}_k$ is nonempty for all $k\geq 0$. Note that every vertex $v\in \bar{\mathcal{V}}_k$ satisfies $v \notin \Int \mathcal{P}$. Indeed, since $v$ is a vertex of $\bar{\mathcal{P}}^\out_k \cap S$, it must be true that $v\in\bd \mathcal{H}_{\bar{k}}$ for some $\bar{k}\leq k$. The assertion follows since $\bd \mathcal{H}_{\bar{k}}$ is a supporting hyperplane of $\mathcal{P}$. Then, by \Cref{prop:zv0}, $x^v$ is a weak minimizer of \eqref{eq:P}.
	
	Assume that the algorithm stops after $\hat{k}$ iterations. Clearly, $\bar{\mathcal{X}}_{\hat{k}}$ is finite and consists of weak minimizers. By \Cref{defn:finite epsilon-solution}, it remains to show that 
$	\bar{\mathcal{P}}^\inn_{\hat{k}} + \mathbb{B}_\epsilon \supseteq \mathcal{P}$
	holds, where $\bar{\mathcal{P}}^\inn_{\hat{k}} := \conv \Gamma(\bar{\mathcal{X}}_{\hat{k}}) + C$. By the stopping condition, for every $v\in \bar{\mathcal{V}}_{\hat{k}}$, we have $\norm{z^v} \leq \epsilon$, hence $x^v \in \bar{\mathcal{X}}_{\hat{k}}$. Moreover, since $(x^v, z^v)$ is feasible for \eqref{eq:P2(v)}, 	
	\begin{align}\label{eq:P-in}
	v + z^v \in \{\Gamma(x^v)\} + C \subseteq \conv \Gamma(\bar{\mathcal{X}}_{\hat{k}}) + C = \bar{\mathcal{P}}^\inn_{\hat{k}}.
	\end{align}
	By \Cref{rem:Scontains}, it is true that
	$\mathcal{\bar{P}}^\inn_{\hat{k}} \subseteq \mathcal{P} \subseteq (\bar{\mathcal{P}}^\out_{\hat{k}} \cap S) + C$. Moreover, as $\conv \Gamma(\bar{\mathcal{X}}_{\hat{k}})$ and $\bar{\mathcal{P}}^\out_{\hat{k}} \cap S$ are compact sets, $\recc \mathcal{\bar{P}}^\inn_{\hat{k}} = \recc ((\bar{\mathcal{P}}^\out_{\hat{k}} \cap S)+C) = C$. By repeating the arguments in the proof of \Cref{lem:Hdist_vertex}, it is easy to check that
	\begin{align}\notag 
	\delta^H(\mathcal{\bar{P}}^\inn_{\hat{k}}, (\bar{\mathcal{P}}^\out_{\hat{k}} \cap S) + C) = \max_{v \in \bar{\mathcal{V}}_{\hat{k}}^C} \ d(v, \mathcal{\bar{P}}^\inn_{\hat{k}}),
	\end{align}
	where $\bar{\mathcal{V}}_{\hat{k}}^C$ denotes the set of all vertices of $(\bar{\mathcal{P}}^\out_{\hat{k}} \cap S)+C$. Observe that every vertex $v$ of $(\bar{\mathcal{P}}^{\out}_{\hat{k}} \cap S) + C$ is also a vertex of $\bar{\mathcal{P}}^{\out}_{\hat{k}} \cap S$, that is, $\bar{\mathcal{V}}_{\hat{k}}^C \subseteq \bar{\mathcal{V}}_{\hat{k}}$. Then, we obtain
	\begin{align}\notag 
	\delta^H(\mathcal{\bar{P}}^\inn_{\hat{k}}, (\bar{\mathcal{P}}^\out_{\hat{k}} \cap S) + C) \leq \max_{v \in \bar{\mathcal{V}}_{\hat{k}}} \ d(v, \mathcal{\bar{P}}^\inn_{\hat{k}}) = \max_{v \in \bar{\mathcal{V}}_{\hat{k}}} \inf_{u \in \mathcal{\bar{P}}^\inn_{\hat{k}}} \norm{u - v} \leq \max_{v \in \bar{\mathcal{V}}_{\hat{k}}} \norm{z^v} \leq \epsilon,
	\end{align}
	where the penultimate inequality follows by \Cref{eq:P-in}. Since
	\begin{align} \label{eq:appr_err}
	\mathcal{\bar{P}}^\inn_{\hat{k}} + \mathbb{B}_\epsilon \supseteq (\bar{\mathcal{P}}^\out_{\hat{k}} \cap S) + C \supseteq \mathcal{P},
	\end{align}
$\bar{\mathcal{P}}^\inn_{\hat{k}} + \mathbb{B}_{\epsilon} \supseteq \mathcal{P}$ follows.
\end{proof}

\section{Finiteness of the modified algorithm}\label{sec:fin}

The correctness of Algorithms \ref{alg} and \ref{alg1} are proven in Theorems \ref{thm:alg} and \ref{thm:alg2}, respectively. In this section, we prove the finiteness of \Cref{alg1}. We provide two technical results before proceeding to the main theorem. 

\begin{lemma} \label{lem:Bset}
	Suppose that Assumptions~\ref{assmp} and~\ref{assmp:C_poly} hold. Let $v \notin \mathcal{P}$ and $\mathcal{H}$ be the halfspace defined by \Cref{prop:supp_halfspace}. If $\norm{z^{v}} \geq \epsilon$, then $B \cap \mathcal{H} = \emptyset$, where $B := \cb{y \in \{v\} + C \mid \norm{y - v} \leq \frac{\epsilon}{2} }$.
\end{lemma}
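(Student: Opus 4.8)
The plan is to show that every point of $B$ strictly violates the inequality defining $\mathcal{H}$, so that no point of $B$ can lie in $\mathcal{H}$. First I would note that the hypothesis $v \notin \mathcal{P}$ lets me invoke \Cref{lem:vlemma}(a) to get $w^v \neq 0$; hence \Cref{prop:supp_halfspace} applies and $\mathcal{H}=\{y\in\R^q\mid (w^v)^{\mathsf{T}}y \geq (w^v)^{\mathsf{T}}\Gamma(x^v)\}$ is indeed a supporting halfspace with the identity $(w^v)^{\mathsf{T}}\Gamma(x^v)=(w^v)^{\mathsf{T}}y^v$, where $y^v=v+z^v$.

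The key identity I would establish is $(w^v)^{\mathsf{T}}z^v=\norm{z^v}$. This follows by combining strong duality from \Cref{prop:optsol}, which gives $\norm{z^v}=\inf_{x\in\mathcal{X}}(w^v)^{\mathsf{T}}\Gamma(x)-(w^v)^{\mathsf{T}}v$, with the facts $\inf_{x\in\mathcal{X}}(w^v)^{\mathsf{T}}\Gamma(x)=(w^v)^{\mathsf{T}}\Gamma(x^v)$ (\Cref{rem:wss}) and $(w^v)^{\mathsf{T}}\Gamma(x^v)=(w^v)^{\mathsf{T}}y^v$ (\Cref{prop:supp_halfspace}); subtracting $(w^v)^{\mathsf{T}}v$ from $(w^v)^{\mathsf{T}}y^v$ and using $z^v=y^v-v$ yields the claim.

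Next I would take an arbitrary $y\in B$, write $y=v+c$ with $c:=y-v\in C$ and $\norm{c}\leq\frac{\epsilon}{2}$. By the dual-norm inequality $(w^v)^{\mathsf{T}}c\leq\norm{w^v}_\ast\norm{c}$ together with the dual feasibility $\norm{w^v}_\ast\leq 1$, I get
\[
(w^v)^{\mathsf{T}}c \leq \frac{\epsilon}{2} < \epsilon \leq \norm{z^v} = (w^v)^{\mathsf{T}}z^v,
\]
where the strict inequality uses $\epsilon>0$ and the penultimate one uses the hypothesis $\norm{z^v}\geq\epsilon$. Adding $(w^v)^{\mathsf{T}}v$ to both ends gives $(w^v)^{\mathsf{T}}y<(w^v)^{\mathsf{T}}(v+z^v)=(w^v)^{\mathsf{T}}\Gamma(x^v)$, so $y\notin\mathcal{H}$. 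Since $y\in B$ was arbitrary, $B\cap\mathcal{H}=\emptyset$.

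There is no serious obstacle here; the only step requiring care is assembling the identity $(w^v)^{\mathsf{T}}z^v=\norm{z^v}$ from strong duality and the two supporting-hyperplane equalities, after which everything reduces to Hölder's inequality, dual feasibility, and the elementary observation that $\frac{\epsilon}{2}<\epsilon$ supplies the needed strict gap against $\norm{z^v}\geq\epsilon$.
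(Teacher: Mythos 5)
Your proof is correct, and its final inequality chain is the same Hölder-plus-dual-feasibility argument as the paper's; the genuine difference lies in how the pivotal identity $(w^v)^{\mathsf{T}}z^v=\norm{z^v}$ (together with $\norm{w^v}_\ast\leq 1$) is justified. The paper obtains both facts at once from the first-order optimality condition $w^v\in\partial\norm{z^v}$, using the variational characterization of the subdifferential of a norm (via \cite[Theorem 23.5]{rockafellar1970convex}): $\partial\norm{z^v}=\{w\mid \norm{z^v}=w^{\mathsf{T}}z^v,\ \norm{w}_\ast\leq 1\}$. You instead assemble the identity from results already proved in the paper: strong duality from \Cref{prop:optsol} gives $\norm{z^v}=\inf_{x\in\mathcal{X}}(w^v)^{\mathsf{T}}\Gamma(x)-(w^v)^{\mathsf{T}}v$, \Cref{rem:wss} gives $\inf_{x\in\mathcal{X}}(w^v)^{\mathsf{T}}\Gamma(x)=(w^v)^{\mathsf{T}}\Gamma(x^v)$, and \Cref{prop:supp_halfspace} gives $(w^v)^{\mathsf{T}}\Gamma(x^v)=(w^v)^{\mathsf{T}}y^v$, from which the identity follows by subtracting $(w^v)^{\mathsf{T}}v$; dual feasibility supplies $\norm{w^v}_\ast\leq 1$ exactly as the paper itself uses it inside \Cref{prop:supp_halfspace}. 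Your route buys self-containedness relative to the paper's own toolkit (no subdifferential machinery or external citation is needed), at the cost of leaning on \Cref{prop:supp_halfspace}, whose proof already encodes essentially the same duality computation; the paper's route is more direct from the optimality conditions of \eqref{eq:P2(v)} and \eqref{eq:D2(v)} and is the same device it reuses in \Cref{prop:lpnorm}. The remaining difference — you take $y\in B$ and show $y\notin\mathcal{H}$, while the paper takes $\bar{y}\in\mathcal{H}$ and shows $\bar{y}\notin B$ — is just the contrapositive and carries no mathematical content. Note also that, like the paper, you never use the cone constraint $y-v\in C$ in the definition of $B$; only the bound $\norm{y-v}\leq\frac{\epsilon}{2}$ matters here, the cone structure being needed later in the finiteness argument of \Cref{thm:finiteness}.
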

\begin{proof}
	Consider \eqref{eq:P2(v)} and its Lagrange dual \eqref{eq:D2(v)}.
	The arbitrarily fixed dual optimal solution $w^v$ satisfies the first order condition with respect to $z$, which can be expressed as $w^v \in  \partial \norm{z^v} $. Note that the subdifferential of $\norm{\cdot}$ at $z^v$ has the variational characterization
	\[
	\partial \norm{z^v} = \cb{w \in\R^q \mid  \sup\{(w^\prime)^{\mathsf{T}}z^v \mid \norm{w^\prime}_\ast\leq 1\}=w^{\mathsf{T}}z^v,\ \norm{w}_{\ast}\leq 1}, 
	\]
	which follows by applying \cite[Theorem 23.5]{rockafellar1970convex}. Since the dual norm of $\norm{\cdot}_{\ast}$ is $\norm{\cdot}$,
	\begin{align} \label{eq:subdif_gennorm}
	w^v\in \partial \norm{z^v} = \{w \in\R^q \mid \norm{z^v} = w^{\mathsf{T}}z^v , \ \norm{w}_{\ast} \leq 1 \}, 
	\end{align}
	Let $\bar{y} \in \mathcal{H}$ be arbitrary. From the definition of $\mathcal{H}$ and \eqref{eq:subdif_gennorm}, we have $(w^v)^{\mathsf{T}}\bar{y} \geq (w^v)^{\mathsf{T}}(v+z^v) = (w^v)^{\mathsf{T}}v + \norm{z^v}$. Equivalently,
$	(w^v)^{\mathsf{T}}(\bar{y}-v) \geq \norm{z^v}$.
	On the other hand, from H\"older's inequality and \eqref{eq:subdif_gennorm}, we have
$	\abs{(w^v)^{\mathsf{T}}(\bar{y}-v)} \leq \norm{w^v}_{\ast}\norm{\bar{y}-v} \leq \norm{\bar{y}-v}$.
	If $\norm{z^v}\geq\epsilon$, then from the last two inequalities, 
	we obtain
$	\norm{\bar{y}-v} \geq \abs{(w^v)^{\mathsf{T}}(\bar{y}-v)} \geq \norm{z^v} \geq \epsilon$.
	Therefore, $\bar{y} \notin B$, which implies $B \cap \mathcal{H} = \emptyset$.
\end{proof}


\begin{lemma}\label{lem:S2}
	Suppose that Assumptions~\ref{assmp} and~\ref{assmp:C_poly} hold. Let $k \geq 0$, $v$ be a vertex of $\bar{\mathcal{P}}^\out_k$, $S$ be as in \eqref{eq:S}; and define
	\begin{align}\notag 
	S_2 := \{ y\in \R^q \mid \bar{w}^{\mathsf{T}}y \leq \beta + 2\alpha\},
	\end{align}
	where $\bar{w}, \beta, \alpha$ are defined by \eqref{eq:w_bar}, \eqref{eq:beta}, \eqref{eq:alpha}, respectively. If $v \in S$, then $v+z^{v} \in \Int S_2$.
\end{lemma}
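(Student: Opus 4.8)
The plan is to reduce the set-membership claim to a single scalar inequality and then bound the two pieces separately. Since $S_2=\{y\in\R^q\mid \bar{w}^{\mathsf{T}}y\leq \beta+2\alpha\}$ is a halfspace with normal $\bar{w}\neq 0$, its interior is $\{y\in\R^q\mid \bar{w}^{\mathsf{T}}y<\beta+2\alpha\}$, so the desired conclusion $v+z^v\in\Int S_2$ is equivalent to the strict inequality $\bar{w}^{\mathsf{T}}(v+z^v)<\beta+2\alpha$. I would split this as $\bar{w}^{\mathsf{T}}(v+z^v)=\bar{w}^{\mathsf{T}}v+\bar{w}^{\mathsf{T}}z^v$ and handle each summand. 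The hypothesis $v\in S$ gives immediately $\bar{w}^{\mathsf{T}}v\leq\beta+\alpha$ by \eqref{eq:S}, so the whole lemma reduces to establishing the strict bound $\bar{w}^{\mathsf{T}}z^v<\alpha$.

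To control $\bar{w}^{\mathsf{T}}z^v$, I would first apply H\"older's inequality together with the normalization $\norm{\bar{w}}_\ast=1$ coming from \eqref{eq:w_bar}, giving $\bar{w}^{\mathsf{T}}z^v\leq\norm{\bar{w}}_\ast\norm{z^v}=\norm{z^v}$. It then suffices to show $\norm{z^v}<\alpha$. For this I would reuse the chain of inequalities already employed in the proof of \Cref{lem:yvk_wMin}: by \Cref{rem:dist} we have $\norm{z^v}=d(v,\mathcal{P})$, and since $v$ is a vertex of $\bar{\mathcal{P}}^\out_k$ with $\bar{\mathcal{P}}^\out_k\subseteq\mathcal{P}^\out_0$ (each iterate is obtained from $\mathcal{P}^\out_0$ by intersecting with supporting halfspaces), one gets
\[
\norm{z^v}=d(v,\mathcal{P})\leq\sup_{v'\in\bar{\mathcal{P}}^\out_k}d(v',\mathcal{P})\leq\sup_{v'\in\mathcal{P}^\out_0}d(v',\mathcal{P})=\delta^H(\mathcal{P},\mathcal{P}^\out_0)<\alpha,
\]
where the final strict inequality is exactly the defining property \eqref{eq:alpha} of $\alpha$ (which has the nonnegative term $\delta^H(\mathcal{P}^\out_0,\mathcal{P})$ as a summand).

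Combining the two bounds yields $\bar{w}^{\mathsf{T}}(v+z^v)\leq(\beta+\alpha)+\norm{z^v}<\beta+\alpha+\alpha=\beta+2\alpha$, which is precisely $v+z^v\in\Int S_2$. I do not anticipate a genuine obstacle here: the argument is essentially a one-line halfspace computation resting on two already-available facts, namely $\norm{\bar{w}}_\ast=1$ and the uniform distance bound $\norm{z^v}<\alpha$ inherited from \eqref{eq:alpha}. The only point requiring slight care is justifying the inclusion $\bar{\mathcal{P}}^\out_k\subseteq\mathcal{P}^\out_0$ (so that the supremum over $\mathcal{P}^\out_0$ dominates $d(v,\mathcal{P})$), and tracking that the inequality $\norm{z^v}<\alpha$ is \emph{strict}, which is what upgrades the membership to the interior $\Int S_2$ rather than merely $S_2$.
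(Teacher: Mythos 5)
Your proof is correct and takes essentially the same approach as the paper's: decompose $\bar{w}^{\mathsf{T}}(v+z^v)$, use $v\in S$ to get $\bar{w}^{\mathsf{T}}v\leq\beta+\alpha$, and use H\"older's inequality with $\norm{\bar{w}}_\ast=1$ together with the bound $\norm{z^v}<\alpha$ (ultimately resting on $\bar{\mathcal{P}}^\out_k\subseteq\mathcal{P}^\out_0$ and \eqref{eq:alpha}) to get $\bar{w}^{\mathsf{T}}z^v<\alpha$. The only cosmetic difference is that the paper derives $\norm{z^v}<\alpha$ via the vertex-maximum formula of \Cref{lem:Hdist_vertex} applied to $\delta^H(\mathcal{P},\bar{\mathcal{P}}^\out_k)$, whereas you reuse the supremum chain from the proof of \Cref{lem:yvk_wMin}; both are equivalent one-line routes to the same estimate.
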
		
\begin{proof}
	Let $\tilde{\mathcal{V}}_k$ denote the set of all vertices of $\bar{\mathcal{P}}^\out_k$. It is given that $v \in \tilde{\mathcal{V}}_k$. Using \Cref{rem:dist} and the arguments in the proof of \Cref{lem:Hdist_vertex}, we obtain 
$	\delta^H(\mathcal{P}, \bar{\mathcal{P}}^\out_k) = \max_{\tilde{v} \in \tilde{\mathcal{V}}_k} d(\tilde{v}, \mathcal{P}) \geq d(v, \mathcal{P}) =  \norm{z^v}$.
	From \eqref{eq:alpha} and the inclusion $\mathcal{P}^\out_0 \supseteq \bar{\mathcal{P}}^\out_k$, we have
	$\delta^H(\mathcal{P}, \bar{\mathcal{P}}^\out_k) \leq \delta^H(\mathcal{P}, \mathcal{P}^\out_0) < \alpha$, which implies $\norm{z^v}< \alpha$.
	Then, using H\"{o}lder's inequality together with $\norm{\bar{w}}_* = 1$, we obtain
$	\bar{w}^{\mathsf{T}} z^{v} \leq \norm{z^{v}} \norm{\bar{w}}_* = \norm{z^{v}} < \alpha$.
	On the other hand, $v \in S$ implies that $\bar{w}^{\mathsf{T}} v \leq \beta + \alpha$. Then, $v+z^{v} \in \Int S_2$ follows since $\bar{w}^{\mathsf{T}} (v+z^v) < \beta + 2\alpha$.
\end{proof}

\begin{theorem}\label{thm:finiteness}
	Suppose that Assumptions~\ref{assmp} and~\ref{assmp:C_poly} hold. \Cref{alg1} terminates after a finite number of iterations.
	\begin{proof}
		By the construction of the algorithm, the number of vertices of $\bar{\mathcal{P}}^\out_k$ is finite for every $k \geq 0$. It is sufficient to prove that there exists $K \geq$ 0 such that for every vertex $v \in 
		\bar{\mathcal{V}}_K$ of $\bar{\mathcal{P}}^\out_K \cap S$, we have $\norm{z^{v}} \leq \epsilon$. To get a contradiction, assume that for every $k \geq 0$, there exists a vertex $v^k \in \bar{\mathcal{V}}_k$ 
		such that $\norm{z^{v^k}} > \epsilon$. For convenience, an optimal solution $(x^{v^k}, z^{v^k})$ of (P($v^k$)) is denoted by $(x^k, z^{k})$ throughout the rest of the proof. Let $S_2$ be as in \Cref{lem:S2}. Then, following similar arguments presented in the proof of \Cref{lem:PcapScompact}, one can show that $\mathcal{P}^\out_0 \cap S_2$ is compact. 
		
		Let $k\geq 0$ be arbitrary. We define
		$B^k := \cb{y \in \{v^k\} + C \mid \norm{y - v^k} \leq \frac{\epsilon}{2}}$.
		Note that $B^k$ is a compact set in $\R^q$ and, as $C$ is solid by \Cref{assmp}, $B^k$ has a positive volume, which is free of the choice of $k$. Next, we show that $B^k \subseteq \mathcal{P}^\out_0 \cap S_2$. Repeating the arguments in the proof of \Cref{lem:recession_directions}, it can be shown that $\recc \bar{\mathcal{P}}^\out_k = C$. Then, since $v^k \in \bar{\mathcal{P}}^\out_k$, it holds
		\begin{equation} \label{eq:Bk_in_Pk}
		\{v^k\} + C \subseteq \bar{\mathcal{P}}^\out_k \subseteq \mathcal{P}^\out_0.
		\end{equation} 
		Hence, $B^k \subseteq \mathcal{P}^\out_0$. To see $B^k \subseteq S_2$, 
		let $y\in B^k$. From H\"{o}lder's inequality and \eqref{eq:w_bar},
		\begin{align}\label{eq:Holder2}
		\bar{w}^{\mathsf{T}} (y - v^k) \leq \norm{y - v^k} \norm{\bar{w}}_{*} = \norm{y - v^k} \leq \frac{\epsilon}{2}.
		\end{align}	
		As there exists $v^0 \in \bar{\mathcal{V}}_0$ with $\norm{z^0} > \epsilon$, it holds true that $\delta^H(\mathcal{P}, \mathcal{P}^\out_0) > \epsilon$. From \eqref{eq:alpha}, it follows that $\alpha > \epsilon$. Then, from \eqref{eq:Holder2}, we obtain $\bar{w}^{\mathsf{T}} (y - v^k) \leq \frac{\epsilon}{2} < \alpha$. Since $v^k \in S$, this implies $\bar{w}^{\mathsf{T}} y < \bar{w}^{\mathsf{T}} v^k + \alpha \leq \beta + 2\alpha$, hence $y \in S_2$. 
		
		Next, we prove that $B^i \cap B^j = \emptyset$ for every $ i, j \geq$ 0 with $i \neq j $. Assume without loss of generality that $i < j $. Thus, $\bar{\mathcal{P}}^\out_j \subseteq \bar{\mathcal{P}}^\out_{i+1}$. From \Cref{lem:Bset}, we have $B^i \cap \mathcal{H}_i = \emptyset$, where $\mathcal{H}_i$ is the supporting halfspace at $\Gamma(x^i)$ as obtained in \Cref{prop:supp_halfspace}. 
		This implies $B^i \cap \bar{\mathcal{P}}^\out_{j} = \emptyset$ as we have $\bar{\mathcal{P}}^\out_j  \subseteq \bar{\mathcal{P}}^\out_{i+1} = \bar{\mathcal{P}}^\out_{i} \cap \mathcal{H}_{i} \subseteq \mathcal{H}_{i}$. 
		On the other hand, we have $B^j \subseteq \bar{\mathcal{P}}^\out_j$ from \eqref{eq:Bk_in_Pk}. Thus, $B^i \cap B^j = \emptyset$. These imply that there is an infinite number of non-overlapping sets, having strictly positive fixed volume, contained in a compact set $\mathcal{P}^\out_0 \cap S_2$, a contradiction.
	\end{proof}
\end{theorem}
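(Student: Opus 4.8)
The plan is to argue by contradiction through a hypervolume argument that packs infinitely many equal-volume sets into a single compact set. Suppose \Cref{alg1} never terminates. Then, by the stopping rule in the main loop, for every $k \geq 0$ there is a vertex $v^k \in \bar{\mathcal{V}}_k$ of $\bar{\mathcal{P}}^\out_k \cap S$ with $\norm{z^{v^k}} > \epsilon$; write $z^k := z^{v^k}$ and $x^k := x^{v^k}$ for brevity. To each iteration I would attach the \emph{cap} set
\[
B^k := \cb{y \in \{v^k\} + C \mid \norm{y - v^k} \leq \frac{\epsilon}{2}} = v^k + \cb{c \in C \mid \norm{c} \leq \frac{\epsilon}{2}}.
\]
Since $C$ is solid by \Cref{assmp}, the fixed set $\cb{c \in C \mid \norm{c} \leq \epsilon/2}$ has nonempty interior and thus strictly positive Lebesgue volume; as each $B^k$ is merely a translate of it, all the $B^k$ share this common positive volume. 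The aim is to show that these infinitely many sets lie pairwise disjointly inside one compact set, which is impossible.

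For the confinement step, I would first note that $\recc \bar{\mathcal{P}}^\out_k = C$ by the argument of \Cref{lem:recession_directions}, so $v^k \in \bar{\mathcal{P}}^\out_k$ gives $\{v^k\} + C \subseteq \bar{\mathcal{P}}^\out_k \subseteq \mathcal{P}^\out_0$; hence $B^k \subseteq \mathcal{P}^\out_0$. To place $B^k$ inside the slab $S_2 := \{y \in \R^q \mid \bar{w}^{\mathsf{T}} y \leq \beta + 2\alpha\}$ from \Cref{lem:S2}, I would use H\"older's inequality and $\norm{\bar{w}}_\ast = 1$: for $y \in B^k$ we get $\bar{w}^{\mathsf{T}}(y - v^k) \leq \norm{y - v^k} \leq \epsilon/2$. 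Because non-termination forces the existence of $v^0 \in \bar{\mathcal{V}}_0$ with $\norm{z^0} > \epsilon$, we have $\delta^H(\mathcal{P}, \mathcal{P}^\out_0) > \epsilon$, so \eqref{eq:alpha} yields $\alpha > \epsilon$ and thus $\epsilon/2 < \alpha$. Combining this with $v^k \in S$, i.e. $\bar{w}^{\mathsf{T}} v^k \leq \beta + \alpha$, gives $\bar{w}^{\mathsf{T}} y < \beta + 2\alpha$, so $B^k \subseteq S_2$. Finally, $\mathcal{P}^\out_0 \cap S_2$ is compact by the very same recession-cone argument used in \Cref{lem:PcapScompact} (the generating vectors $w^1,\ldots,w^J$ together with $\bar{w}$ again force any recession direction to vanish). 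Hence every $B^k$ lies in the compact set $\mathcal{P}^\out_0 \cap S_2$.

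The crux is pairwise disjointness, and this is where \Cref{lem:Bset} does the heavy lifting. Fix $i < j$. Since $\norm{z^i} > \epsilon$ and $v^i \notin \mathcal{P}$, \Cref{lem:Bset} gives $B^i \cap \mathcal{H}_i = \emptyset$, where $\mathcal{H}_i$ is the supporting halfspace at $\Gamma(x^i)$ from \Cref{prop:supp_halfspace}. Because the outer approximations are nested and refined by intersection, $\bar{\mathcal{P}}^\out_j \subseteq \bar{\mathcal{P}}^\out_{i+1} = \bar{\mathcal{P}}^\out_i \cap \mathcal{H}_i \subseteq \mathcal{H}_i$, whence $B^i \cap \bar{\mathcal{P}}^\out_j = \emptyset$; on the other hand $B^j \subseteq \{v^j\} + C \subseteq \bar{\mathcal{P}}^\out_j$, so $B^i \cap B^j = \emptyset$. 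I expect this disjointness step to be the main obstacle, as it is precisely the novel hypervolume argument flagged in the introduction: it requires matching the geometry of the cut halfspaces $\mathcal{H}_i$ (which separate the cap $B^i$ from all strictly later iterates) against the monotone nesting of the $\bar{\mathcal{P}}^\out_k$. With disjointness established, $\{B^k\}_{k \geq 0}$ is an infinite family of pairwise disjoint sets of a common strictly positive volume, all contained in the finite-volume compact set $\mathcal{P}^\out_0 \cap S_2$ — a contradiction. Therefore \Cref{alg1} must terminate after finitely many iterations.
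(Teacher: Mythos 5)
Your proposal is correct and follows essentially the same route as the paper's proof: the same cap sets $B^k$, the same confinement of all $B^k$ in the compact set $\mathcal{P}^\out_0 \cap S_2$ via the recession-cone and H\"older arguments, the same use of \Cref{lem:Bset} together with the nesting $\bar{\mathcal{P}}^\out_j \subseteq \bar{\mathcal{P}}^\out_{i+1} \subseteq \mathcal{H}_i$ for pairwise disjointness, and the same volume-packing contradiction. The only cosmetic additions (observing that $B^k$ is a translate of a fixed set, and that $\norm{z^{v^i}}>\epsilon$ implies $v^i\notin\mathcal{P}$ before invoking \Cref{lem:Bset}) make explicit what the paper leaves implicit.
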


We conclude this section with a convergence result regarding the Hausdorff distance between the upper image and its polyhedral approximations.

\begin{corollary}\label{cor:limit_1}
	Suppose that Assumptions~\ref{assmp} and~\ref{assmp:C_poly} hold. \rev{Let $\epsilon=0$ and \Cref{alg1} be modified by introducing a cutting order based on selecting a farthest away vertex (instead of an arbitrary vertex) in line 20. Then,}
	\[
	\lim_{k\to \infty} \delta^H(\bar{\mathcal{P}}^\inn_k, \mathcal{P}) = \lim_{k\to \infty} \delta^H((\bar{\mathcal{P}}^\out_k \cap S)+C, \mathcal{P}) = 0,
	\]
	where $\bar{\mathcal{P}}^\inn_{k} := \conv \Gamma(\bar{\mathcal{X}}_{k}) + C$, the sets $\bar{\mathcal{X}}_{k}, \bar{\mathcal{P}}^\out_k$ are as described in \Cref{alg1}, and $S$ is given by \eqref{eq:S}.
\end{corollary}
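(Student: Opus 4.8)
The plan is to reduce both Hausdorff distances to a single scalar quantity and then show that quantity vanishes by a ball-packing argument mirroring the finiteness proof of \Cref{thm:finiteness}. Set $\gamma_k := \delta^H((\bar{\mathcal{P}}^\out_k \cap S)+C, \mathcal{P})$. Since moving a point of $(\bar{\mathcal{P}}^\out_k\cap S)+C$ along a recession direction in $C$ cannot increase its distance to $\mathcal{P}=\mathcal{P}+C$, and since $d(\cdot,\mathcal{P})$ is convex and thus attains its maximum over the compact polyhedron $\bar{\mathcal{P}}^\out_k\cap S$ at a vertex, I would first record the identity $\gamma_k=\max_{v\in\bar{\mathcal{V}}_k} d(v,\mathcal{P})=\max_{v\in\bar{\mathcal{V}}_k}\norm{z^v}$, using \Cref{rem:dist} and the argument of \Cref{lem:Hdist_vertex}. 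For the inner approximation I would invoke the estimate from the proof of \Cref{thm:alg2}: using $\bar{\mathcal{P}}^\inn_k\subseteq\mathcal{P}\subseteq(\bar{\mathcal{P}}^\out_k\cap S)+C$ (so that $\delta^H(\bar{\mathcal{P}}^\inn_k,\mathcal{P})\leq\delta^H(\bar{\mathcal{P}}^\inn_k,(\bar{\mathcal{P}}^\out_k\cap S)+C)$) together with $v+z^v\in\bar{\mathcal{P}}^\inn_k$ for every current vertex $v$, one obtains $\delta^H(\bar{\mathcal{P}}^\inn_k,(\bar{\mathcal{P}}^\out_k\cap S)+C)\leq\max_{v\in\bar{\mathcal{V}}_k}\norm{z^v}=\gamma_k$. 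Hence both limits in the statement are dominated by $\gamma_k$, and it suffices to prove $\gamma_k\to 0$.

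Next I would establish monotone convergence of $\gamma_k$. Because each iteration only intersects the previous approximation with a halfspace, the sets $\bar{\mathcal{P}}^\out_k\cap S$ are nested and decreasing; taking the maximum of the convex function $d(\cdot,\mathcal{P})$ over smaller sets yields $\gamma_{k+1}\leq\gamma_k$. As $\gamma_k\geq 0$, the limit $\gamma^\ast:=\lim_{k\to\infty}\gamma_k$ exists, and I would argue by contradiction, assuming $\gamma^\ast>0$.

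Under this assumption the farthest-vertex selection rule cuts, at each iteration $k$, at a vertex $v^k$ with $\norm{z^{v^k}}=\gamma_k\geq\gamma^\ast>0$. This uniform lower bound is exactly what replaces the role of $\epsilon$ in \Cref{thm:finiteness}, which is precisely why the farthest-vertex modification is needed: with $\epsilon=0$ the cutting condition alone provides no positive lower bound on $\norm{z^{v^k}}$. Define $B^k:=\{y\in\{v^k\}+C\mid \norm{y-v^k}\leq\gamma^\ast/2\}$, a translate of the fixed compact set $\{c\in C\mid\norm{c}\leq\gamma^\ast/2\}$, which has a positive volume independent of $k$ since $C$ is solid. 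Applying the computation of \Cref{lem:Bset} with $\gamma^\ast$ in place of $\epsilon$ (legitimate because $\norm{z^{v^k}}\geq\gamma^\ast$) gives $B^k\cap\mathcal{H}_k=\emptyset$. Repeating the inclusions from the proof of \Cref{thm:finiteness}, namely $\{v^k\}+C\subseteq\bar{\mathcal{P}}^\out_k\subseteq\mathcal{P}^\out_0$ and, using $\gamma^\ast/2<\gamma^\ast\leq\gamma_0\leq\delta^H(\mathcal{P},\mathcal{P}^\out_0)<\alpha$, the bound $\bar{w}^{\mathsf{T}}y<\beta+2\alpha$ for $y\in B^k$, I obtain $B^k\subseteq\mathcal{P}^\out_0\cap S_2$, a compact set. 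The sets $B^k$ are pairwise disjoint: for $i<j$ we have $\bar{\mathcal{P}}^\out_j\subseteq\bar{\mathcal{P}}^\out_{i+1}=\bar{\mathcal{P}}^\out_i\cap\mathcal{H}_i\subseteq\mathcal{H}_i$, so $B^i\cap\bar{\mathcal{P}}^\out_j=\emptyset$ while $B^j\subseteq\{v^j\}+C\subseteq\bar{\mathcal{P}}^\out_j$. Infinitely many pairwise disjoint sets of fixed positive volume cannot fit inside a compact set, a contradiction. Therefore $\gamma^\ast=0$, and both limits vanish.

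The main obstacle I anticipate lies in the first paragraph rather than in the packing argument: one must argue carefully that $\delta^H(\bar{\mathcal{P}}^\inn_k,(\bar{\mathcal{P}}^\out_k\cap S)+C)\leq\gamma_k$ holds at \emph{every} iteration, not merely at termination as in \Cref{thm:alg2}. This requires $v+z^v\in\bar{\mathcal{P}}^\inn_k$ for all $v\in\bar{\mathcal{V}}_k$, i.e.\ that the weak minimizers $x^v$ at all current vertices have been recorded in $\bar{\mathcal{X}}_k$; this is consistent with the farthest-vertex rule, which must evaluate $d(v,\mathcal{P})=\norm{z^v}$ at every vertex in order to identify the farthest one, together with the variant that records each such minimizer. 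Once this bookkeeping point and the identity $\gamma_k=\max_{v\in\bar{\mathcal{V}}_k}\norm{z^v}$ are secured, the convergence $\gamma_k\to0$, and hence the corollary, follows from the monotonicity and the volume contradiction above.
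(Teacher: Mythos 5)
Your proposal is correct in substance, but it reaches the conclusion by a genuinely different route than the paper. The paper's proof is a coupling argument: it invokes \Cref{thm:finiteness} as a black box, observing that with the farthest-vertex rule the run with $\epsilon=0$ and the run with $\epsilon=\varepsilon>0$ perform identical iterations until the latter terminates at some finite step $K(\varepsilon)$; the $\epsilon=0$ run therefore attains $\delta^H(\bar{\mathcal{P}}^\inn_{K(\varepsilon)},\mathcal{P})\leq\varepsilon$, and monotonicity of the inner error together with the arbitrariness of $\varepsilon$ gives the first limit, after which the second limit follows from $\delta^H((\bar{\mathcal{P}}^\out_k\cap S)+C,\mathcal{P})\leq\delta^H(\bar{\mathcal{P}}^\inn_k,\mathcal{P})$ as in \eqref{eq:appr_err}. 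You instead work directly with the outer error $\gamma_k=\delta^H((\bar{\mathcal{P}}^\out_k\cap S)+C,\mathcal{P})=\max_{v\in\bar{\mathcal{V}}_k}\norm{z^v}$, prove it is nonincreasing, and rule out a positive limit $\gamma^\ast$ by re-running the ball-packing argument of \Cref{thm:finiteness} with $\gamma^\ast$ in place of $\epsilon$, which \Cref{lem:Bset} permits because the farthest-vertex rule guarantees every cut occurs at a vertex with $\norm{z^{v^k}}=\gamma_k\geq\gamma^\ast$; your verification that $B^k\subseteq\mathcal{P}^\out_0\cap S_2$ via $\gamma^\ast\leq\gamma_0\leq\delta^H(\mathcal{P},\mathcal{P}^\out_0)<\alpha$ is sound. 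Note that you and the paper bound the two Hausdorff errors in opposite directions (you deduce the inner limit from the outer one, the paper the outer from the inner); both inequalities are valid. Your route is longer but self-contained and quantitative, exhibiting monotone convergence of $\gamma_k$ rather than convergence along a subsequence; the paper's route is shorter, reuses finiteness directly, and makes transparent exactly where the farthest-vertex rule matters, namely in keeping the two runs synchronized (cf.\ \Cref{rem:limit_1}).

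One point you flag deserves emphasis: with $\epsilon=0$, the else-branch (line 18 of \Cref{alg}) fires only at vertices lying exactly on $\mathcal{P}$, so under the literal bookkeeping of \Cref{alg1} the set $\bar{\mathcal{X}}_k$ essentially never grows, and the inequality $\delta^H(\bar{\mathcal{P}}^\inn_k,\mathcal{P})\leq\gamma_k$ --- which requires $v+z^v\in\bar{\mathcal{P}}^\inn_k$ for the evaluated vertices --- would fail. Your remedy, recording every computed weak minimizer as in the footnoted variant of \Cref{alg} (natural here, since identifying the farthest vertex forces solving \eqref{eq:P2(v)} at every vertex anyway), is exactly what is needed, up to a harmless one-step index shift since minimizers computed at iteration $k$ enter $\bar{\mathcal{X}}_{k+1}$. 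The paper's own proof silently relies on the same convention: its claim that the two runs ``find the same inner approximation'' at step $K(\varepsilon)$ is accurate only if minimizers are recorded upon evaluation, because with the default bookkeeping the $\varepsilon$-run adds the minimizers of all terminal vertices during its final pass while the $\epsilon=0$ run does not. So this is not a defect of your argument relative to the paper's; if anything, you make explicit a hypothesis the paper leaves implicit.
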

\begin{proof}
	\rev{Note that \Cref{alg1} is finite by \Cref{thm:finiteness} for an arbitrary vertex selection rule, hence, also when a farthest away vertex is selected in line 20. Therefore, given $\varepsilon > 0$, there exists $K(\varepsilon) \in \mathbb{N}$ such that the set $\bar{\mathcal{X}}_{K(\varepsilon)}$ is a finite weak $\varepsilon$-solution as in \Cref{defn:finite epsilon-solution} and $\delta^H(\bar{\mathcal{P}}^\inn_{K(\varepsilon)}, \mathcal{P}) \leq \varepsilon$ by \Cref{rem:Hausdorff}. 
	Let us consider the modified algorithm (with $\epsilon=0$). If $\delta^H(\bar{\mathcal{P}}^\inn_{k}, \mathcal{P})=0$ for some $k\in\N$, then it is clear that $\lim_{k\to \infty} \delta^H(\bar{\mathcal{P}}^\inn_k, \mathcal{P}) = 0$. Suppose that $\delta^H(\bar{\mathcal{P}}^\inn_{k}, \mathcal{P})>0$ for every $k\in\N$. With the farthest away vertex selection rule, when we run the algorithm with $\epsilon=0$ and $\epsilon=\varepsilon>0$, the two work in the same way until the one with $\epsilon=\varepsilon$ stops. Hence, they find the same inner approximation $\bar{\mathcal{P}}^\inn_{K(\varepsilon)}$ at step $K(\varepsilon)$. Let $k_0:=K(\varepsilon)$. By an induction argument, for every $n\in\N$, the inequality $\delta^H(\bar{\mathcal{P}}^\inn_{k_{n}}, \mathcal{P}) \leq \frac{\varepsilon}{n}$ will be satisfied by the algorithm with $\epsilon=0$ for some $k_n>k_{n-1}$. Hence, $\lim_{n\rightarrow\infty}\delta^H(\bar{\mathcal{P}}^\inn_{k_{n}}, \mathcal{P})=0$, which implies that $\lim_{k\to \infty} \delta^H(\bar{\mathcal{P}}^\inn_k, \mathcal{P}) = 0$ by the monotonicity of $(\delta^H(\bar{\mathcal{P}}^\inn_k, \mathcal{P}) )_{k\in\N}$.
%
}
	
	
	Moreover, similar to the discussion in the proof of \Cref{thm:alg2}, see \eqref{eq:appr_err}, it can be shown that 
$\delta^H((\bar{\mathcal{P}}^\out_k \cap S)+C, \mathcal{P}) \leq \delta^H(\bar{\mathcal{P}}^\inn_k, \mathcal{P})$ holds for each $k\in\N$. \rev{Hence, $\lim_{k\to \infty} \delta^H((\bar{\mathcal{P}}^\out_k \cap S)+C, \mathcal{P}) = 0$.}
\end{proof}

\begin{remark}\label{rem:limit_1}
	\rev{In \Cref{cor:limit_1}, choosing the farthest away vertex in each iteration is critical. Indeed, without this rule, the algorithm run with $\epsilon=0$ and $\epsilon=\varepsilon>0$ may not work in the same way due to line 11 in \Cref{alg1}. In such a case, we may not use \Cref{thm:finiteness} to argue that the algorithm with $\epsilon=0$ satisfies $\delta^H(\bar{\mathcal{P}}^\inn_{k}, \mathcal{P}) \leq \varepsilon$ for some $k\in\N$. Indeed, it might happen in case of non-polyhedral $\mathcal{P}$ that the algorithm keeps updating the outer approximation by focusing only on one part of $\mathcal{P}$. Thus, the Hausdorff distance at the limit may not be zero.} 
	%
		%
	%
	%
\end{remark}

\section{Examples and computational results}
\label{sec:experiments}

In this section, we examine few numerical examples to evaluate the performance of Algorithms \ref{alg} and \ref{alg1} in comparison with the primal algorithm (referred to as Algorithm 3 here) in \cite{lohne2014primal}. The algorithms are implemented using MATLAB R2018a along with CVX, a package to solve convex programs \cite{cvx, gb08}, and \emph{bensolve tools} \cite{lohne2017vector} to solve the scalarization and vertex enumeration problems in each iteration, respectively. The tests are performed using a 3.6 GHz Intel Core i7 computer with a \rev{64 GB RAM}. 

We consider three examples: \Cref{ex:1} is a standard illustrative example with a linear objective function, see \cite{ehrgott2011approximation,lohne2014primal}, in which both the feasible region and its image are the Euclidean unit ball centered at the vector $e=(1,\ldots,1)^{\mathsf{T}}\in \R^q$. In \Cref{ex:2}, the objective functions are nonlinear while the constraints are linear; in \Cref{ex:3}, nonlinear terms appear both in the objective function and constraints \cite[Examples 5.8, 5.10]{ehrgott2011approximation}, \cite{miettinen2006experiments}. 



\begin{example} \label{ex:1} We consider the following problem for $q\in \{2,3,4\}$, where  $C=\R_+^q$:
	\begin{align}\notag 
	\text{minimize~~} & \Gamma(x) = x \text{~~with respect to} \leq_{C} \\
	\text{subject to~~} &	\norm{x - e}_2 \leq 1, \:\: x\in\R^q.
	\end{align}
\end{example}

\begin{example} \label{ex:2} Let $a^1=(1,1)^\mathsf{T},a^2=(2,3)^\mathsf{T}, a^3=(4,2)^\mathsf{T}$. Consider
	\begin{align}\notag 
	\text{minimize~~} \ & \Gamma(x) = (\norm{x-a^1}_2^2,\norm{x-a^2}_2^2,\norm{x-a^3}_2^2)^\mathsf{T}
	\text{~~with respect to} \leq_{\R_+^3} \\
	\text{subject to~~} \ & x_1 + 2x_2 \leq 10, \:\: 0 \leq x_1 \leq 10, \:\: 0 \leq x_2 \leq 4, \:\: x\in\R^2.
	\end{align}
\end{example}

\begin{example} \label{ex:3} \rev{Let $\hat{b}^1=(0,10,120), \hat{b}^2=(80, -448, 80), \hat{b}^3=(-448,80,80)$ and $b^1,b^2,b^3\in\R^n$.} Consider
	\begin{align}\notag 
	\text{minimize~~} & \Gamma(x) = ( \norm{x}_2^2+b^1x,  \norm{x}_2^2+b^2x, \norm{x}_2^2+b^3x)^\mathsf{T}
	\text{~~with respect to} \leq_{\R_+^3} \\
	\text{subject to~~} & \norm{x}_2^2 \leq 100, \:\: 0 \leq x_i \leq 10 \text{~for~} i\in\rev{\{1,\dots,n\}}, \:\: x\in\R^n.
	\end{align}
\begin{enumerate}[(a)]
	\item \rev{Let $n=3$ and $b^1=\hat{b}^1, b^2=\hat{b}^2, b^3=\hat{b}^3$.}
	
	\item \rev{Let $n=9$ and $b^1=(\hat{b}^1,\hat{b}^1,\hat{b}^1), b^2=(\hat{b}^2,\hat{b}^2,\hat{b}^2), b^3=(\hat{b}^3,\hat{b}^3,\hat{b}^3)$.}
\end{enumerate}
\end{example}


We solve these examples with Algorithms \ref{alg} and \ref{alg1}, where the norm in \eqref{eq:P2(v)} is taken as the $\ell_p$ norm for $p\in\{1,2,\infty\}$. An outer approximation of the upper image for each example is shown in \Cref{fig:ex}. For Algorithm 3, the fixed direction vector for the scalarization model is taken as $\frac{e}{\norm{e}_p} \in \R^q$, again for $p\in\{1,2,\infty\}$. This way, it is guaranteed that when Algorithm 3 returns a finite weak $\epsilon$-solution in the sense of \cite[Definition 3.3]{lohne2014primal}, this solution is also a finite weak $\epsilon$-solution in the sense of \Cref{defn:finite epsilon-solution} for the corresponding norm-ball, \rev{see \cite[Remark 3.4]{vertexselection}. We solve Examples \ref{ex:2} and \ref{ex:3} for the approximation errors as chosen by Ehrgott et al. in \cite{ehrgott2011approximation} for the same examples.} 

The computational results are presented in Tables~\ref{tab:ex1}-\ref{tab:ex3}, which show the approximation error ($\epsilon$), the algorithm (Alg), the cardinality of finite weak $\epsilon$-solution ($\abs{\bar{\mathcal{X}}}$), the number of optimization problems (Opt), and the number of vertex enumeration problems (En) solved through the algorithm, along with the respective times taken to solve these problems (T$_\text{opt}$, T$_\text{en}$), as well as the total runtime of the algorithm (T), \rev{where T$_\text{opt}$, T$_\text{en}$ and T are in seconds}. 

\begin{figure}[tbhp]
	\centering
	\subfloat[\Cref{ex:1} with $\epsilon=0.01$]{\label{fig:ex1}\includegraphics[width=2.1in]{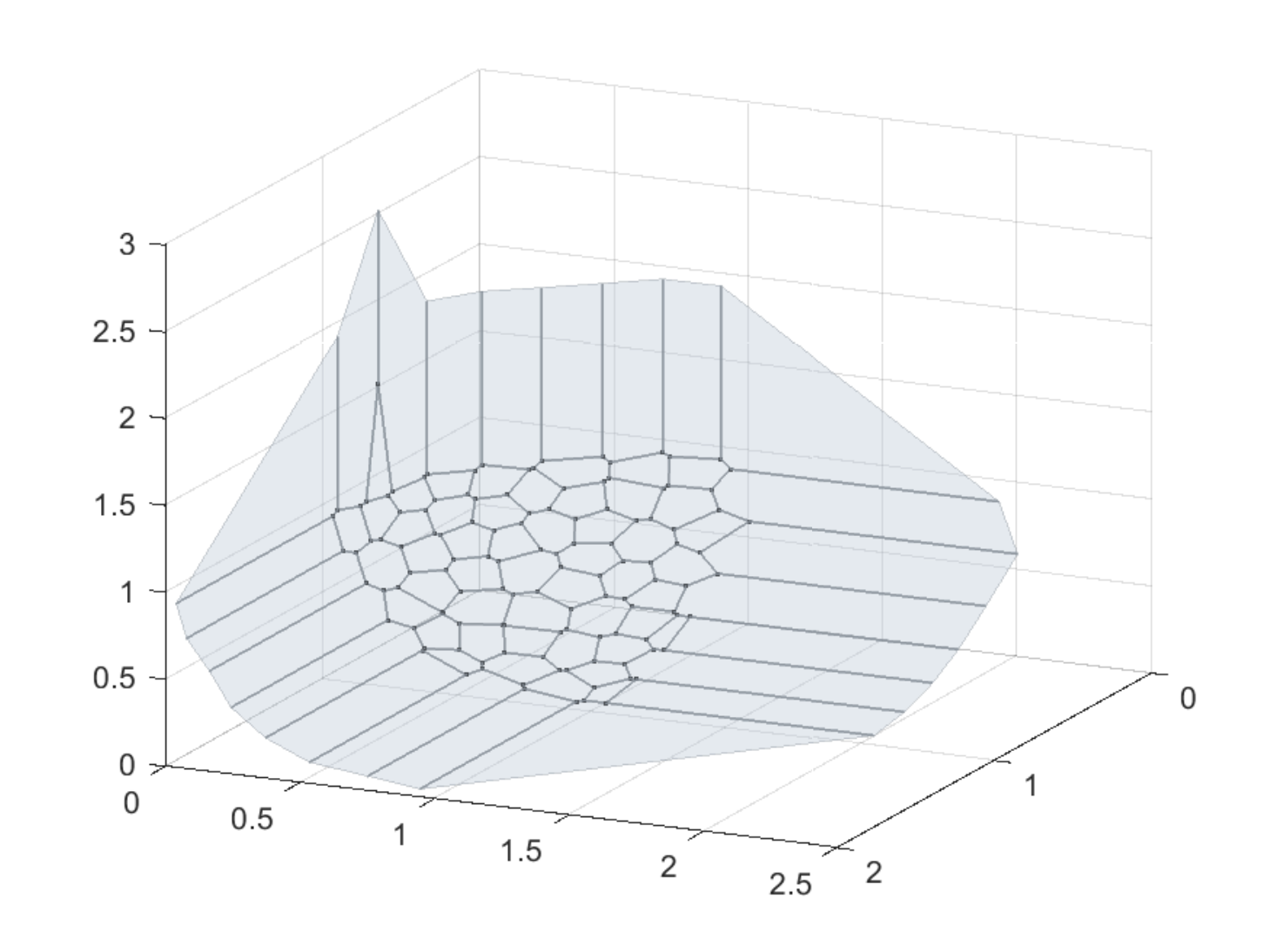}}
	\subfloat[\Cref{ex:2} with $\epsilon=0.05$]{\label{fig:ex2}\includegraphics[width=2.1in]{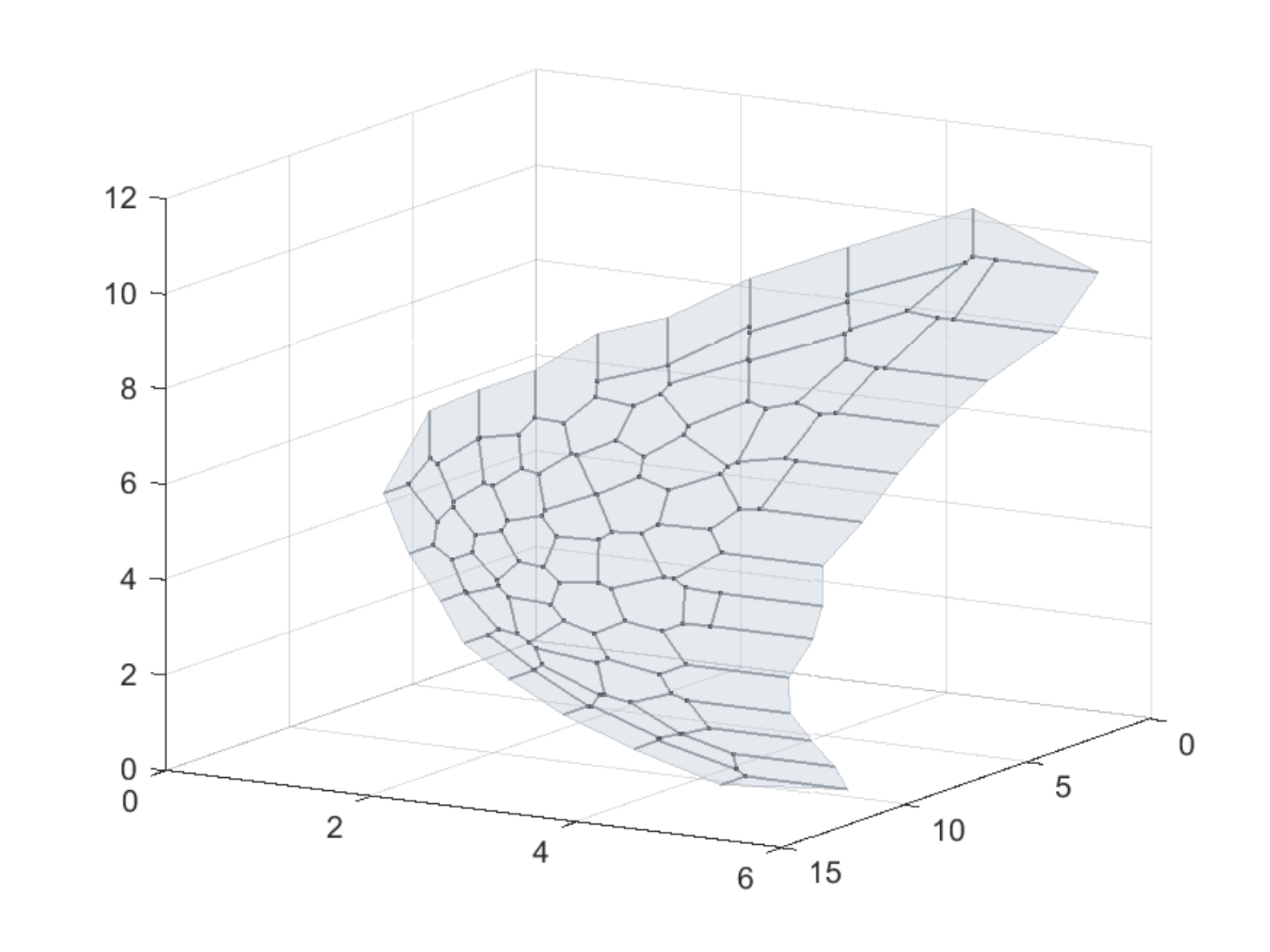}}
	\subfloat[\Cref{ex:3} with $\epsilon=5$]{\label{fig:ex3}\includegraphics[width=2.1in]{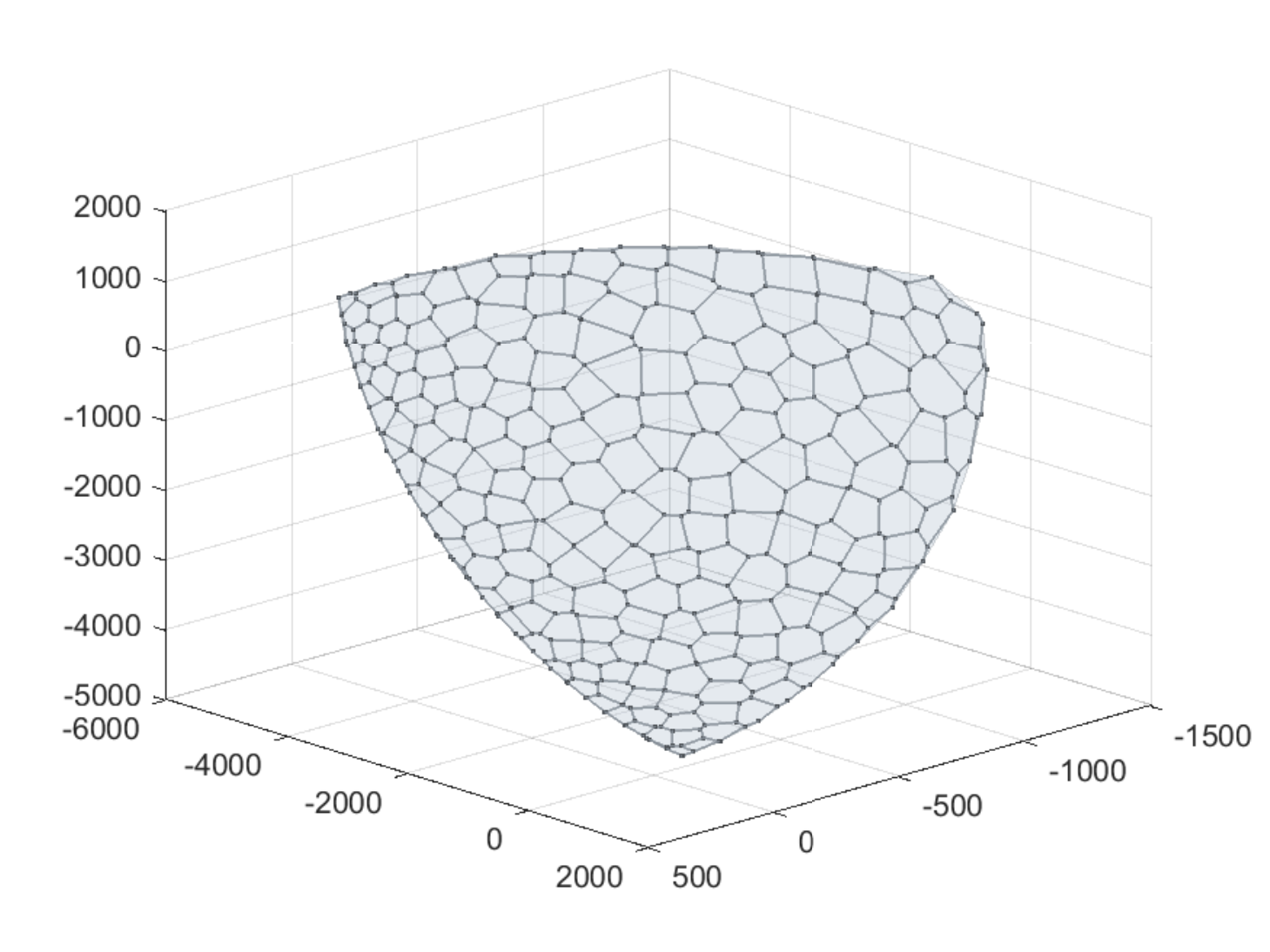}}
	\caption{Outer approximations obtained from \Cref{alg} using $\ell_2$ norm}
	\label{fig:ex}
\end{figure}

\rev{Note that we could not run the algorithms in a few settings. We cannot solve \Cref{ex:1}, $q=4$ for $\epsilon=0.1$ by Algorithms \ref{alg} and \ref{alg1} when $p=2$, and by Algorithms \ref{alg1} and 3 when $p=\infty$. Similarly, we cannot solve \Cref{ex:2} by Algorithm 3 for any $p \in \{1, 2, \infty\}$. Moreover, we cannot solve this example by \Cref{alg} when $p=\infty$ for both $\epsilon$ values and when $p=1$ for $\epsilon=0.01$. Since it is not possible to provide a comparison, we do not report the results for $p=\infty$ in \Cref{tab:ex2}. Finally, \Cref{ex:3} cannot be solved by Algorithms~\ref{alg} and \ref{alg1} when $p=1$. Hence, \Cref{tab:ex3} does not show the results for $p=1$ for any setting. The main reason that the algorithms cannot solve these instances is the limitations of \emph{bensolve tools} in vertex enumeration. 
}

\begin{table}[htbp]
	\centering
	\caption{\rev{Computational results for \Cref{ex:1}} }
	\label{tab:ex1}%
	\resizebox{\textwidth}{!}{
		\begin{tabular}{|c|c|c|cccccc|c|cccccc|}
			\cline{3-16}    \multicolumn{1}{r}{} &     & \multicolumn{7}{c|}{$q$=3}                & \multicolumn{7}{c|}{$q$=4} \\
			\hline
			\multirow{2}[2]{*}{$p$} & \multirow{2}[2]{*}{Alg} & \multicolumn{1}{c}{\multirow{2}[2]{*}{$\epsilon$}} & \multirow{2}[2]{*}{$\abs{\bar{\mathcal{X}}}$} & \multirow{2}[2]{*}{Opt} & \multirow{2}[2]{*}{T$_\text{opt}$} & \multirow{2}[2]{*}{En} & \multirow{2}[2]{*}{T$_\text{en}$} & \multirow{2}[2]{*}{T} & \multicolumn{1}{c}{\multirow{2}[2]{*}{$\epsilon$}} & \multirow{2}[2]{*}{$\abs{\bar{\mathcal{X}}}$} & \multirow{2}[2]{*}{Opt} & \multirow{2}[2]{*}{T$_\text{opt}$} & \multirow{2}[2]{*}{En} & \multirow{2}[2]{*}{T$_\text{en}$} & \multirow{2}[2]{*}{T} \\
			&     & \multicolumn{1}{c}{} &     &     &     &     &     &     & \multicolumn{1}{c}{} &     &     &     &     &     &  \\
			\hline
			\multirow{3}[2]{*}{1} & 1   & \multirow{9}[6]{*}{0.05} & 33  & 52  & 13.29 & 20  & 0.31 & 13.68 & \multirow{9}[6]{*}{0.5} & 30  & 41  & 11.66 & 12  & 0.22 & 11.95 \\
			& 2   &     & 42  & 59  & 15.59 & 17  & 0.26 & 15.99 &     & 57  & 69  & 19.27 & 11  & 0.28 & 19.71 \\
			& 3   &     & 56  & 89  & 17.32 & 34  & 1.02 & 18.56 &     & 33  & 44  & 9.71 & 12  & 0.20 & 9.97 \\
			\cline{1-2}\cline{4-9}\cline{11-16}    \multirow{3}[2]{*}{2} & 1   &     & 29  & 45  & 10.49 & 17  & 0.23 & 10.79 &     & 29  & 34  & 8.70 & 6   & 0.07 & 8.80 \\
			& 2   &     & 44  & 61  & 14.24 & 17  & 0.24 & 14.68 &     & 94  & 99  & 25.98 & 5   & 0.07 & 26.20 \\
			& 3   &     & 32  & 50  & 9.76 & 19  & 0.27 & 10.10 &     & 31  & 42  & 9.18 & 12  & 0.19 & 9.43 \\
			\cline{1-2}\cline{4-9}\cline{11-16}    \multirow{3}[2]{*}{$\infty$} & 1   &     & 21  & 34  & 8.15 & 14  & 0.16 & 8.35 &     & 8   & 9   & 2.34 & 2   & 0.02 & 2.38 \\
			& 2   &     & 37  & 51  & 12.39 & 13  & 0.15 & 12.61 &     & 11  & 15  & 4.23 & 1   & 0.02 & 4.39 \\
			& 3   &     & 21  & 34  & 6.76 & 14  & 0.15 & 6.96 &     & 8   & 9   & 2.05 & 2   & 0.02 & 2.09 \\
			\hline
			\multirow{3}[2]{*}{1} & 1   & \multirow{9}[6]{*}{0.01} & 175 & 262 & 69.13 & 88  & 20.17 & 92.99 & \multirow{9}[6]{*}{0.1} & 143 & 177 & 52.94 & 34  & 2.67 & 56.25 \\
			& 2   &     & 161 & 235 & 62.55 & 73  & 10.56 & 75.28 &     & 232 & 273 & 78.37 & 38  & 5.05 & 84.42 \\
			& 3   &     & 256 & 397 & 76.54 & 142 & 113.29 & 212.22 &     & 412 & 510 & 111.49 & 91  & 48.25 & 165.40 \\
			\cline{1-2}\cline{4-9}\cline{11-16}    \multirow{3}[2]{*}{2} & 1   &     & 128 & 196 & 46.51 & 69  & 8.41 & 56.52 &     &   \multicolumn{6}{c|}{-}\\
			& 2   &     & 145 & 209 & 49.42 & 64  & 6.56 & 57.29 &    &  \multicolumn{6}{c|}{-}\\
			& 3   &     & 139 & 213 & 41.49 & 75  & 10.39 & 53.88 &     & 208 & 265 & 57.93 & 46  & 5.08 & 63.67 \\
			\cline{1-2}\cline{4-9}\cline{11-16}    \multirow{3}[2]{*}{$\infty$} & 1   &     & 93  & 145 & 35.34 & 53  & 3.44 & 39.47 &     & 68  & 82  & 22.59 & 12  & 0.22 & 22.87 \\
			& 2   &     & 107 & 154 & 37.20 & 47  & 2.43 & 40.15 &     &  \multicolumn{6}{c|}{-}\\
			& 3   &     & 87  & 137 & 26.72 & 51  & 3.03 & 30.36 &     &   \multicolumn{6}{c|}{-}\\
			\hline
		\end{tabular}%
	}
\end{table}

\begin{table}[h]
	\centering
	\caption{\rev{Computational results for \Cref{ex:2}}}
	\label{tab:ex2}
	\resizebox{0.6\textwidth}{!}{
		\begin{tabular}{|c|c|ccccccc|}
			\hline
			\multirow{2}[2]{*}{$\epsilon$} & \multirow{2}[2]{*}{$p$} & \multirow{2}[2]{*}{Alg} & \multirow{2}[2]{*}{$\abs{\bar{\mathcal{X}}}$} & \multirow{2}[2]{*}{Opt} & \multirow{2}[2]{*}{T$_\text{opt}$} & \multirow{2}[2]{*}{En} & \multirow{2}[2]{*}{T$_\text{en}$} & \multirow{2}[2]{*}{T} \\
			&     &     &     &     &     &     &     &  \\
			\hline
			\multirow{4}[4]{*}{0.05} & \multirow{2}[2]{*}{1} & 1   & 188 & 310 & 107.26 & 87  & 19.30 & 130.34 \\
			&     & 2   & 157 & 233 & 79.79 & 70  & 9.88 & 91.94 \\
			\cline{2-9}        & \multirow{2}[2]{*}{2} & 1   & 145 & 225 & 70.22 & 76  & 11.46 & 84.07 \\
			&     & 2   & 141 & 206 & 61.72 & 64  & 7.18 & 70.47 \\
			\hline
			\multirow{4}[4]{*}{0.01} & \multirow{2}[2]{*}{1} & 1   &  \multicolumn{6}{c|}{-}\\
			&     & 2   & 772 & 1187 & 401.00 & 340 & 3197.85 & 4276.37 \\
			\cline{2-9}        & \multirow{2}[2]{*}{2} & 1   & 869 & 1421 & 420.01 & 311 & 2302.12 & 3171.41 \\
			&     & 2   & 655 & 957 & 285.04 & 279 & 1529.54 & 2162.02 \\
			\hline
		\end{tabular}%
	}
\end{table}%

\begin{table}[h]
	\centering
	\caption{\rev{Computational results for \Cref{ex:3}}}
	\label{tab:ex3}
	\resizebox{\textwidth}{!}{
		\begin{tabular}{|c|c|c|cccccc|cccccc|}
			\cline{4-15}    \multicolumn{1}{r}{} & \multicolumn{1}{r}{} &     & \multicolumn{6}{c|}{$n$=3}          & \multicolumn{6}{c|}{$n$=9} \\
			\hline
			\multirow{2}[1]{*}{$\epsilon$} & \multirow{2}[1]{*}{$p$} & \multirow{2}[1]{*}{Alg} & \multirow{2}[1]{*}{$\abs{\bar{\mathcal{X}}}$} & \multirow{2}[1]{*}{Opt} & \multirow{2}[1]{*}{T$_\text{opt}$} & \multirow{2}[1]{*}{En} & \multirow{2}[1]{*}{T$_\text{en}$} & \multirow{2}[1]{*}{T} & \multirow{2}[1]{*}{$\abs{\bar{\mathcal{X}}}$} & \multirow{2}[1]{*}{Opt} & \multirow{2}[1]{*}{T$_\text{opt}$} & \multirow{2}[1]{*}{En} & \multirow{2}[1]{*}{T$_\text{en}$} & \multirow{2}[1]{*}{T} \\
			&     &     &     &     &     &     &     &     &     &     &     &     &     &  \\
			\hline
			\multirow{6}[2]{*}{10} & \multirow{3}[1]{*}{2} & 1   & 502 & 943 & 285.43 & 132 & 100.12 & 401.95 & 1561 & 2754 & 1159.60 & 225 & 753.05 & 2046.88 \\
			&     & 2   & 958 & 3924 & 1194.30 & 137 & 122.28 & 1339.09 & 1770 & 4213 & 1819.70 & 218 & 733.58 & 2682.37 \\
			&     & 3   & 305 & 965 & 259.85 & 164 & 211.23 & 502.43 & 2718 & 4520 & 1772.08 & 259 & 1324.24 & 3295.96 \\
			\cline{2-15}        & \multirow{3}[1]{*}{inf} & 1   & 197 & 592 & 179.71 & 106 & 41.61 & 227.61 & 1231 & 2106 & 901.81 & 152 & 150.43 & 1076.14 \\
			&     & 2   & 199 & 1206 & 390.99 & 100 & 36.67 & 432.99 & 3638 & 9222 & 4045.45 & 166 & 219.60 & 4301.63 \\
			&     & 3   & 180 & 586 & 157.46 & 101 & 33.60 & 196.24 & 2628 & 5057 & 1994.39 & 164 & 202.33 & 2231.07 \\
			\hline
			\multirow{6}[3]{*}{5} & \multirow{3}[1]{*}{2} & 1   & 1178 & 3127 & 932.65 & 245 & 1059.10 & 2175.25 & 4461 & 7968 & 3371.91 & 390 & 8008.67 & 12795.23 \\
			&     & 2   & 1207 & 5557 & 1702.79 & 259 & 1488.53 & 3441.28 & 7052 & 15662 & 6546.29 & 409 & 9908.03 & 18268.17 \\
			&     & 3   & 579 & 3932 & 1049.79 & 309 & 3046.66 & 4529.37 & 7046 & 11149 & 4307.21 & 476 & 16898.76 & 23603.25 \\
			\cline{2-15}        & \multirow{3}[2]{*}{inf} & 1   & 412 & 1740 & 526.33 & 185 & 325.59 & 907.19 & 3153 & 4538 & 1889.06 & 294 & 2164.45 & 4390.16 \\
			&     & 2   & 465 & 2655 & 837.24 & 188 & 374.95 & 1268.01 & 3570 & 8155 & 3482.44 & 305 & 2589.99 & 6470.71 \\
			&     & 3   & 342 & 1412 & 380.10 & 185 & 352.70 & 787.48 & 3146 & 4712 & 1845.91 & 300 & 2455.39 & 4663.98 \\
			\hline
		\end{tabular}%
	}
\end{table}%

In line with the theory, Tables~\ref{tab:ex1}-\ref{tab:ex3} illustrate that Opt, En as well as T increase when a smaller approximation error is used, irrespective of the algorithm considered.


\rev{According to \Cref{tab:ex1}, for $p=1$ in terms of all performance measures, Algorithms \ref{alg} and \ref{alg1} perform better than Algorithm 3, except for $q=4, \epsilon =0.5$ for which Algorithms \ref{alg} and 3 have similar performances. For $p\in\{2,\infty\}$, Algorithms \ref{alg} and 3 perform similar to each other and better than \Cref{alg1} in terms of Opt, T$_\text{opt}$ and T. When we compare Algorithms \ref{alg} and \ref{alg1}, we observe that  \Cref{alg1} solves a larger number of optimization problems (Opt) compared to \Cref{alg} in all settings except $p=1, \epsilon = 0.01$. The reason may be that the former algorithm deals with a higher number of vertices, coming from the intersection of $\bar{\mathcal{P}}^\out_k$ with $\bd S$, $k \geq 0$ (line 19 of \Cref{alg1}).}

\rev{\Cref{tab:ex2} indicates that, in solving \Cref{ex:2}, Algorithm \ref{alg1} performs better than \Cref{alg} with respect to all indicators.} 
	
\rev{Finally, for \Cref{ex:3} when we compare the performances under $n=3$ with $p=2$, \Cref{alg} works better compared to the others in terms of Opt, \rev{En} and T. Under $n=3$ with $p=\infty$, the same holds for Algorithm 3, see \Cref{tab:ex3}. However, under $n=9$, \Cref{alg} performs better than the others in all instances.}

\rev{When we compare the results for \Cref{ex:3}(a) and (b), we observe that even with the same precision level $\epsilon$, the number of minimizers is at least twice as and generally much higher than the number of minimizers in Example 8.3(a), which also affects the total times. The reason may be that due to the increase in the dimension of the feasible region and the structure of the objective functions, the range of the objective function changes and the difficulty of the problem increases in Example 8.3(b).}

From the results of the test problems above, we deduce a comparable performance of our proposed algorithms compared to Algorithm 3. 

\rev{Next,} we consider \Cref{ex:1} for $q\in \{2,3\}$ with different ordering cones than the positive orthant, see \Cref{tab:ex1cones} and \Cref{fig:3dcones}. These cones are given below in terms of their generating vectors:
\begin{align}
C_1&=\conv \cone \{(1, 2)^\mathsf{T}, (2, 1)^\mathsf{T}\},\notag \\
C_2&=\conv \cone \{(2, -1)^\mathsf{T}, (-1, 2)^\mathsf{T}\},\notag \\ 
C_3&=\conv \cone \{(4, 2, 2)^\mathsf{T}, (2, 4, 2)^\mathsf{T}, (4, 0, 2)^\mathsf{T}, (1, 0, 2)^\mathsf{T}, (0, 1, 2)^\mathsf{T}, (0, 4, 2)^\mathsf{T}\},\notag  \\
C_4&=\conv \cone \{(-1, -1, 3)^\mathsf{T}, (2, 2, -1)^\mathsf{T}, (1, 0, 0)^\mathsf{T}, (0, -1, 2)^\mathsf{T}, (-1, 0, 2)^\mathsf{T}, (0, 1, 0)^\mathsf{T}\}.\notag 
\end{align}
Note that we have $C_1 \subsetneq \R^2_+ \subsetneq C_2=C_1^+$ and $C_3$\footnote{The same cone is used as a dual cone in \cite[Example~9]{lohne2016equivalence}.} $\subsetneq \R^3_+ \subsetneq C_4=C_3^+$. We solve these examples with Algorithms \ref{alg} \rev{and} \ref{alg1}, where the norm in \eqref{eq:P2(v)} is the $\ell_2$ norm. As before, due to the limitations of \emph{bensolve tools}, \Cref{tab:ex1cones} does not show the result \rev{for \Cref{alg1} when the ordering cone is} $C_3$ \rev{and $\epsilon=0.01$}. According to \Cref{tab:ex1cones}, for \rev{$C_1$ and} $C_2$, Algorithms \ref{alg} \rev{and \ref{alg1}} are comparable in terms of T.
\rev{For $C_3$ with $\epsilon=0.05$, \Cref{alg1} gives smaller T.}
However, for $C_4$, \Cref{alg} \rev{has better runtime.} 

\begin{figure}[h]
	\centering
	\subfloat[$C_3$]{\label{fig:3d_C3}\includegraphics[width=2.5in]{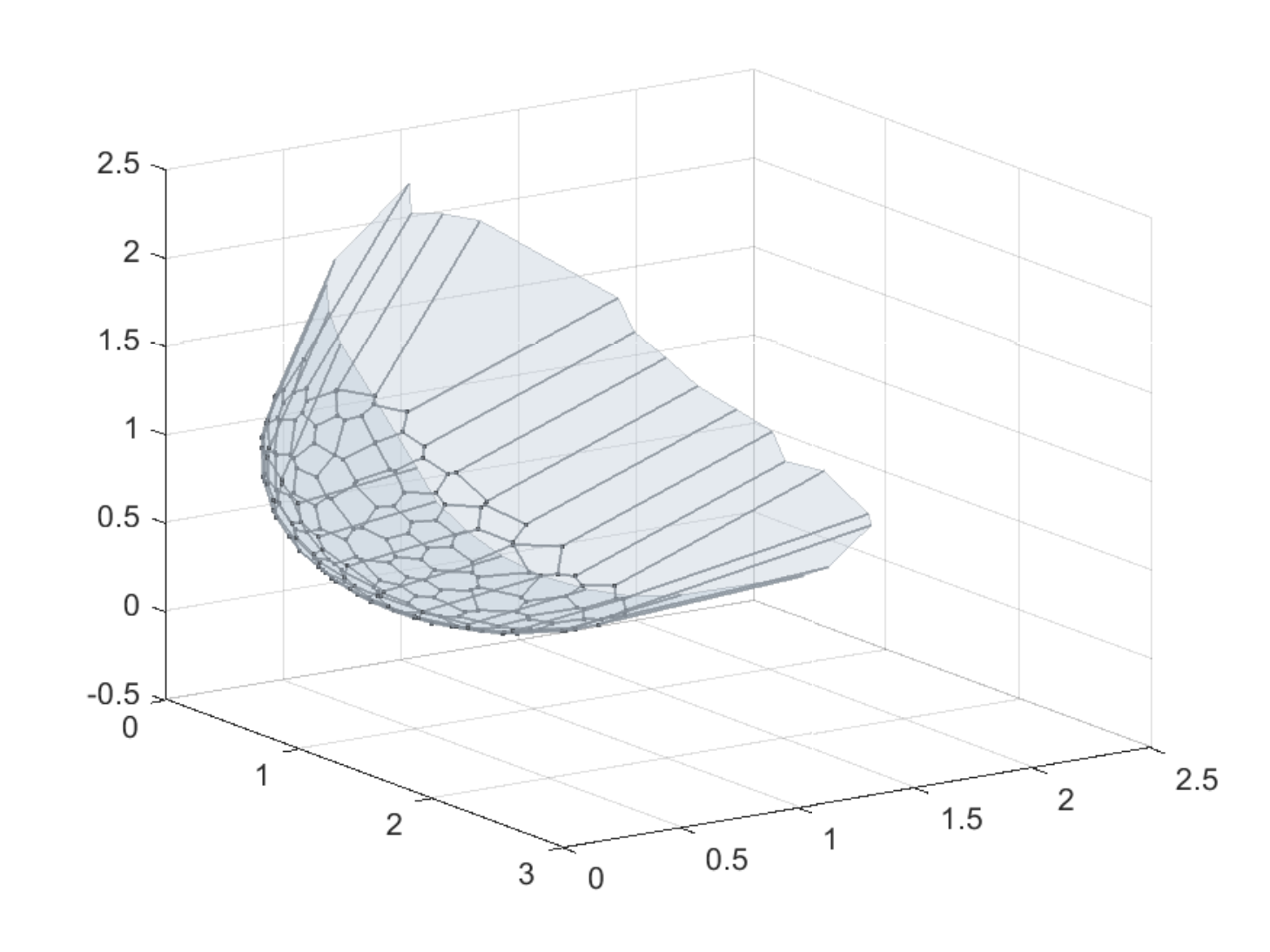}}
	\subfloat[$C_4$]{\label{fig:3d_C4}\includegraphics[width=2.5in]{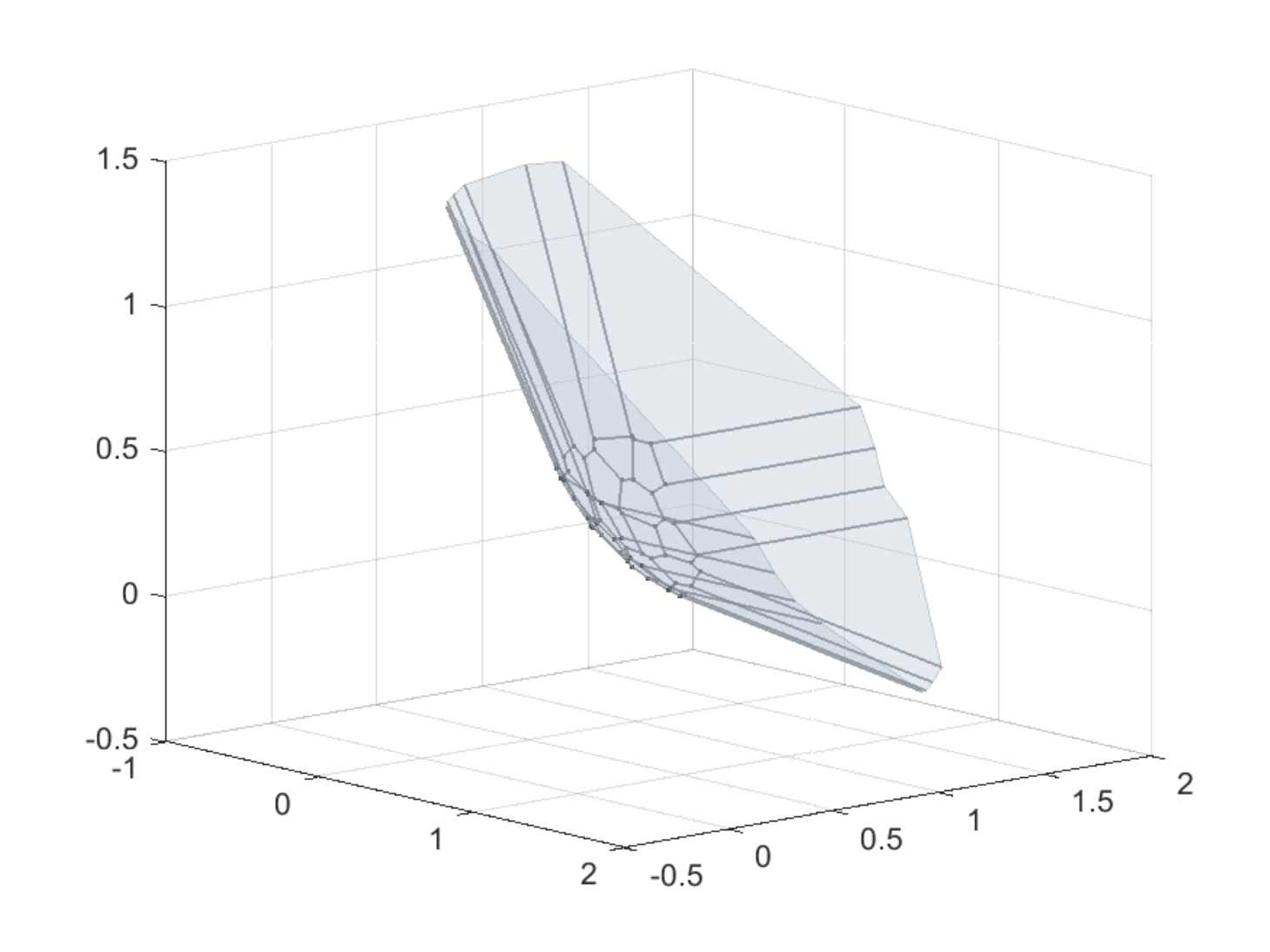}}
	\caption{Outer approximations obtained from \Cref{alg} using $\ell_2$ norm for \Cref{ex:1} with $q=3$ under different cones}
	\label{fig:3dcones}
\end{figure}

\begin{table}[h]
	\centering
	\caption{\rev{Computational results for \Cref{ex:1} with different ordering cones and $p=2$}}
	\label{tab:ex1cones}
	\resizebox{\textwidth}{!}{
		\begin{tabular}{|c|c|c|cccccc|c|c|cccccc|}
			\cline{2-17}    \multicolumn{1}{r|}{} & \multicolumn{1}{r}{} & \multicolumn{7}{c|}{$q$ = 2}              & \multicolumn{1}{r}{} & \multicolumn{7}{c|}{$q$ = 3} \\
			\hline
			\multirow{2}[2]{*}{Alg} & \multirow{2}[2]{*}{$\epsilon$} & \multirow{2}[2]{*}{$C$} & \multirow{2}[2]{*}{$\abs{\bar{\mathcal{X}}}$} & \multirow{2}[2]{*}{Opt} & \multirow{2}[2]{*}{T$_\text{opt}$} & \multirow{2}[2]{*}{En} & \multirow{2}[2]{*}{T$_\text{en}$} & \multirow{2}[2]{*}{T} & \multirow{2}[2]{*}{$\epsilon$} & \multirow{2}[2]{*}{$C$} & \multirow{2}[2]{*}{$\abs{\bar{\mathcal{X}}}$} & \multirow{2}[2]{*}{Opt} & \multirow{2}[2]{*}{T$_\text{opt}$} & \multirow{2}[2]{*}{En} & \multirow{2}[2]{*}{T$_\text{en}$} & \multirow{2}[2]{*}{T} \\
			&     &     &     &     &     &     &     &     &     &     &     &     &     &     &     &  \\
			\hline
			1   & \multirow{4}[4]{*}{0.005} & \multirow{2}[2]{*}{$C_1$} & 19  & 34  & 6.95 & 16  & 0.12 & 7.12 & \multirow{4}[4]{*}{0.05} & \multirow{2}[2]{*}{$C_3$} & 62  & 89  & 20.54 & 28  & 0.79 & 21.50 \\
			2   &     &     & 21  & 36  & 7.51 & 15  & 0.11 & 7.69 &     &     & 57  & 77  & 18.27 & 18  & 0.35 & 18.82 \\
			\cline{1-1}\cline{3-9}\cline{11-17}    1   &     & \multirow{2}[2]{*}{$C_2$} & 6   & 9   & 1.87 & 4   & 0.02 & 1.91 &     & \multirow{2}[2]{*}{$C_4$} & 22  & 29  & 6.57 & 8   & 0.08 & 6.68 \\
			2   &     &     & 8   & 11  & 2.30 & 3   & 0.02 & 2.36 &     &     & 28  & 34  & 7.90 & 4   & 0.04 & 8.01 \\
			\hline
			1   & \multirow{4}[4]{*}{0.001} & \multirow{2}[2]{*}{$C_1$} & 37  & 69  & 14.39 & 32  & 0.45 & 14.96 & \multirow{4}[4]{*}{0.01} & \multirow{2}[2]{*}{$C_3$} & 229 & 346 & 80.66 & 117 & 62.00 & 154.04 \\
			2   &     &     & 36  & 67  & 14.09 & 31  & 0.44 & 14.69 &     &     &     \multicolumn{6}{c|}{-}\\
			\cline{1-1}\cline{3-9}\cline{11-17}    1   &     & \multirow{2}[2]{*}{$C_2$} & 10  & 17  & 3.44 & 8   & 0.05 & 3.52 &     & \multirow{2}[2]{*}{$C_4$} & 71  & 107 & 24.99 & 37  & 1.45 & 26.78 \\
			2   &     &     & 12  & 19  & 4.04 & 7   & 0.04 & 4.13 &     &     & 90  & 123 & 28.94 & 31  & 1.05 & 30.31 \\
			\hline
		\end{tabular}
	}%
\end{table}%

\rev{We conclude this section by a remark that illustrates the necessity of intersecting $\bar{\mathcal{P}}_k^{\out}$ with $S$ in \Cref{alg1}.}

	\begin{remark}\label{rem:outsideS}
	\rev{As noted before in \Cref{sec:alg2}, it is possible that some vertices of $\bar{\mathcal{P}}_k^{\out}$ falls outside $S$. Consider \Cref{ex:1} with $q=3$. Note that $\Gamma(\mathcal{X})$ is the unit ball centered at $e\in\R^3$, and $\mathcal{P}_0$ is the positive orthant. In the initialization phase of \Cref{alg1}, we obtain $S = \{ y\in\R^3 \mid \bar{w}^{\mathsf{T}}y \leq 3.56 \}$, where $\bar{w}=\frac{1}{\sqrt{3}}e$. 
	For illustrative purposes, consider the supporting halfspace $\mathcal{H} = \{ y\in\R^q \mid w^{\mathsf{T}}y \geq 0.68 \}$ of the upper image, where the normal direction is $w = (1,1,0.1)^\mathsf{T}$. This would support the upper image at the $C$-minimal point $(0.2947,0.2947,0.9295)^{\mathsf{T}}$. Note that $\bd \mathcal{H}$ intersects with $\mathcal{P}_0$ to give three vertices: $v^1=(0.68, 0, 0)^\mathsf{T}$, $v^2=(0, 0.68, 0)^\mathsf{T}$, $v^3 = (0, 0, 6.8)^\mathsf{T}$. Clearly, $v^1,v^2\in S$ and $v^3\notin S$. Moreover, as stated in \Cref{rem:Soutside} the approximation generated by $v^1,v^2$, namely, $\conv ( \{v^1,v^2\}) + \R_+^3$ does not contain the upper image; see \Cref{fig:Projection}.}	

\begin{figure}[tbhp]
	\centering
	\includegraphics[width=2.7in]{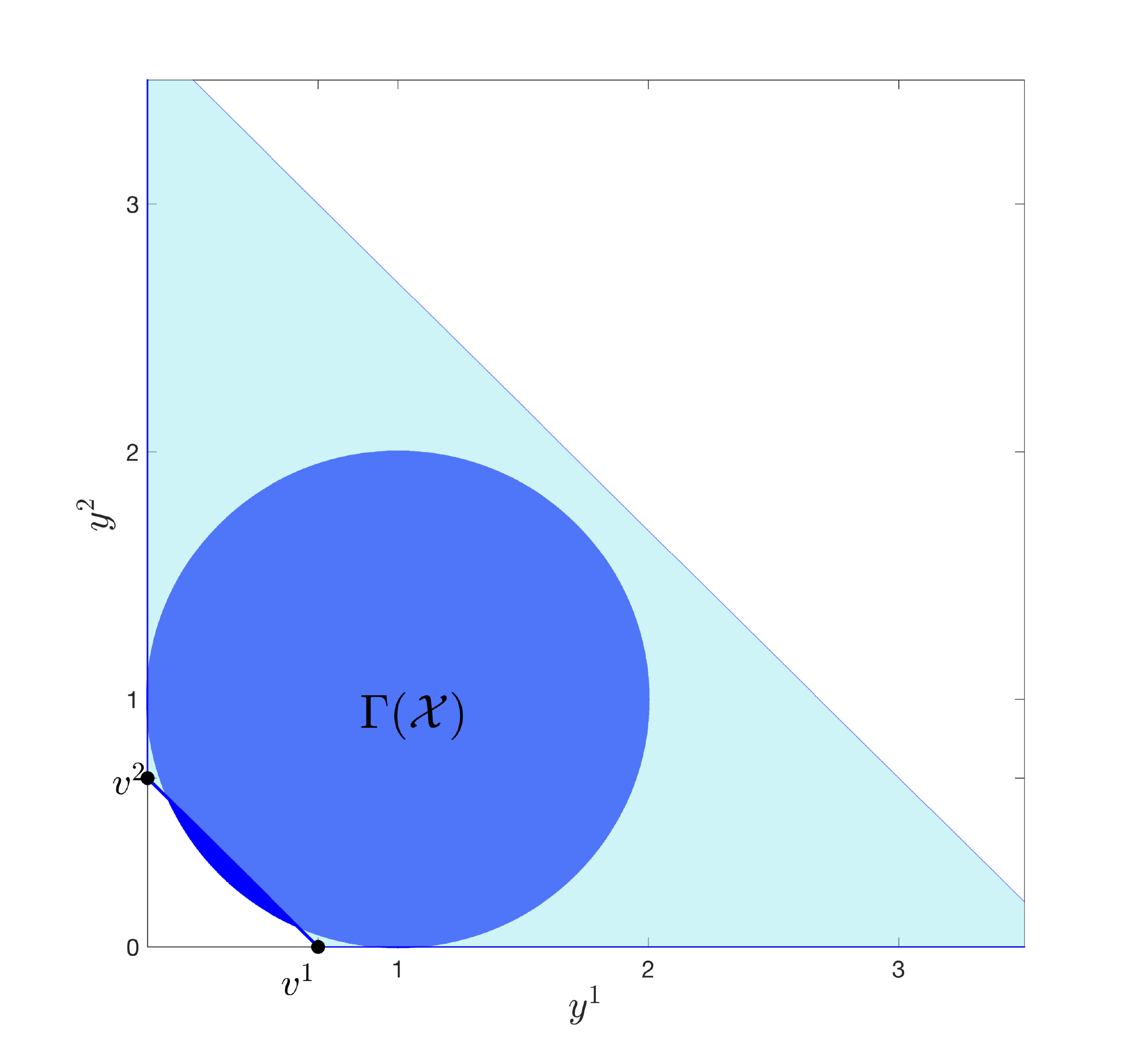}
	\caption{\rev{Projections of $\Gamma(\mathcal{X})$ (dark blue), $\conv ( \{v^1,v^2\}) + \R_+^3$ (light blue) from \Cref{rem:outsideS} on the $y_3=0$ plane}}
	\label{fig:Projection}
\end{figure}

\rev{We practically encounter vertices which fall outside $S$, for instance, while running \Cref{alg} for \Cref{ex:1} with $q=3$, $p=2$ and $\epsilon=0.01$. 
	\Cref{fig:VoutsideS_1} shows the outer approximation, after iteration $k =37$, with one of the vertices outside $\mathcal{P}_0^{\text{out}} \cap S$.}

\begin{figure}[tbhp]
	\centering
	\includegraphics[width=4in]{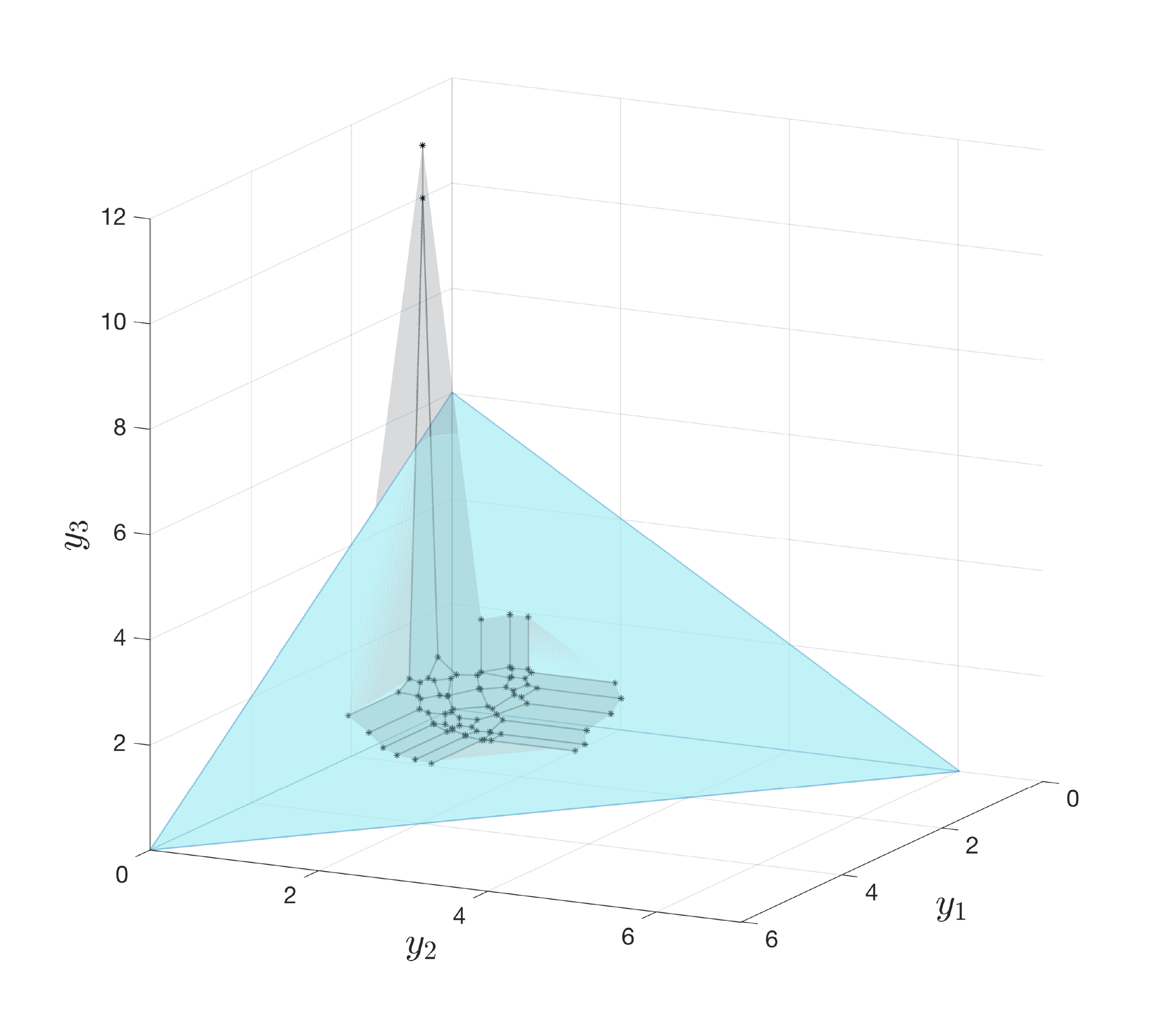}
	\caption{\rev{Outer approximation $\mathcal{P}_k^{\text{out}}$, $k = 37$ (vertices and representative points on unbounded edges indicated by black markers) for \Cref{ex:1} obtained using \Cref{alg}  (for $p=2$ and $\epsilon=0.01$) has one vertex outside $S$ (blue).}}
	\label{fig:VoutsideS_1}
\end{figure}

\end{remark}



\section{Conclusions}
\label{sec:conclusions}

In this study, we have proposed an algorithm for CVOPs which is based on a norm-minimizing scalarization. It is different from the similar class of algorithms available in the literature in the sense that it does not need a direction parameter as an input. We have also proposed a modification of the algorithm and proved its finiteness under the assumption of compact feasible region. \rev{Using benchmark test problems, the computational performance of the new algorithms is found to be comparable to a CVOP algorithm in the recent literature which uses the Pascoletti-Serafini scalarization.}


\singlespacing
\bibliographystyle{plain}
\bibliography{PAACVOP.bib}

\end{document}